\newtheorem{theorem}{Theorem}
\newtheorem{proposition}[theorem]{Proposition}%
\newtheorem{lemma}[theorem]{Lemma}%
\newtheorem{condition}[theorem]{Condition}%
\newtheorem{remark}{Remark}%
\newtheorem{definition}{Definition}%
\def\iPiano{{i$^2$Piano}}
\def\iPianoLA{iPila}
\newlist{algorithmsteps}{enumerate}{1}
\setlist[algorithmsteps,1]{
  label={\textsc{Step}~\arabic*},
  leftmargin=*,
  align=left,
  labelsep=2mm,
}
\def\R{\mathbb R}
\def\bR{\overline{\R}}
\def\N{\mathbb N}
\def\dom{\mathrm{dom}}
\def\argmin{\operatornamewithlimits{argmin}\limits}
\def\prox{{\mathrm{prox}}}
\def\dist{\mathrm{dist}}
\def\kinN{{k\in\N}}
\def\k{^{(k)}}
\def\x{ { x}}
\def\xk{ \x^{(k)}}
\def\yk{ \hat y^{(k)}}
\def\xkk{ \x^{(k+1)}}
\def\xkm{ \x^{(k-1)}}
\def\xkj{ \x^{(k_j)}}
\def\ty{\tilde y}
\def\hy{\hat y}
\def\tyk{ \ty^{(k)}}
\def\hyk{ \hy^{(k)}}
\def\uk{u^{(k)}}
\def\ukm{u^{(k-1)}}
\def\ukj{u^{(k_j)}}
\def\dk{d^{(k)}}
\def\ak{{\alpha_k}}
\def\amin{\alpha_{min}}
\def\amax{\alpha_{max}}
\def\lamk{\lambda_k}
\def\hk{h^{(k)}}
\def\rk{\rho^{(k)}}
\def\rkm{\rho^{(k-1)}}
\def\rkj{\rho^{(k_j)}}
\def\tkk{\tau^{(k+1)}}
\def\tk{\tau^{(k)}}
\def\tkm{\tau^{(k-1)}}
\def\P{\Phi}
\def\F{{\mathcal F}}
\newcommand\silviacorr[1]{#1}
\def\yk{y^{(k)}}
\begin{document}

\title{An abstract convergence framework with application to inertial inexact forward--backward methods}

\author[1]{Silvia Bonettini}

\author[2]{{Peter} {Ochs}} 

\author[1]{{Marco} {Prato}}

\author[3]{{Simone} {Rebegoldi}}

\affil[1]{{Dipartimento di Scienze Fisiche, Informatiche e Matematiche}, {Universit\`a di Modena e Reggio Emilia}, {{Via Campi 213/b}, {Modena}, {41125}, {Italy}}\\
\url{silvia.bonettini@unimore.it} \ \ \url{marco.prato@unimore.it}}

\affil[2]{{Department of Mathematics}, {University of T\"{u}bingen}, {{Auf der Morgenstelle 10}, {T\"{u}bingen}, {72076}, {Germany}}\\ \url{ochs@math.uni-tuebingen.de}
}

\affil[3]{{Dipartimento di Ingegneria Industriale}, {Universit\`a di Firenze}, {{Via di S. Marta 3}, {Firenze}, {50139}, {Italy}}\\ \url{simone.rebegoldi@unifi.it}}

\maketitle
\abstract{In this paper we introduce a novel abstract descent scheme suited for the minimization of proper and lower semicontinuous functions. The proposed abstract scheme generalizes a set of properties that are crucial for the convergence of several first-order methods designed for nonsmooth nonconvex optimization problems. Such properties guarantee the convergence of the full sequence of iterates to a stationary point, if the objective function satisfies the Kurdyka--\L{}ojasiewicz property. The abstract framework allows for the design of new algorithms. We propose two inertial-type algorithms with implementable inexactness criteria for the main iteration update step. The first algorithm, i$^2$Piano, exploits large steps by adjusting a local Lipschitz constant. The second algorithm, iPila, overcomes the main drawback of line-search based methods by enforcing a descent only on a merit function instead of the objective function. Both algorithms have the potential to escape local minimizers (or stationary points)  by leveraging the inertial feature. Moreover, they are proved to enjoy the full convergence guarantees of the abstract descent scheme, which is the best we can expect in such a general nonsmooth nonconvex optimization setup using first-order methods. The efficiency of the proposed algorithms is demonstrated on two exemplary image deblurring problems, where we can appreciate the benefits of performing a linesearch along the descent direction inside an inertial scheme.}




\section{Introduction}\label{sec1}

The design of efficient first-order methods is vital for tackling composite optimization problems of the form
\begin{equation}\label{min_intro}
\min_{x\in\mathbb{R}^n}f(x), \quad f(x)=f_0(x)+f_1(x),
\end{equation}
where $f_1$ is convex and $f_0$ is continuously differentiable on an open set containing the domain of $f_1$. Such problems are frequently encountered in image processing and machine learning applications \cite{Bertero2018,Bottou-Curtis-Nocedal-2018,Chambolle-Pock-2016}, where one of the two terms is usually a data fidelity term and the other one encodes some apriori information on the ground truth \cite{Bertero2018}. Popular and effective first-order methods aiming at solving \eqref{min_intro} include forward--backward (FB) methods \cite{Combettes-Pesquet-2011,Combettes-Wajs-2005,Ochs-etal-2014,Bonettini-Loris-Porta-Prato-Rebegoldi-2017}, whose structure consists in the alternation of a gradient step on $f_0$ followed by a proximal minimization step on $f_1$, block coordinate methods \cite{Bolte-etal-2014,Bonettini-Prato-Rebegoldi-2018,Chouzenoux-etal-2016,Frankel-etal-2015}, Douglas-Rachford methods \cite{Combettes-Pesquet-2011,Li-Pong-2016} and several others.

In recent years, the convergence of first-order descent methods in nonconvex settings has been carefully addressed by relying on the so-called Kurdyka--\L{}ojasiewicz (KL) inequality \cite{Attouch-etal-2013,Bolte-etal-2007,Kurdyka-1998}. This analytical property is satisfied by a large number of objective functions arising in signal processing and machine learning, such as real analytic or semialgebraic functions (see e.g. \cite{Bolte-etal-2007,Bolte-etal-2010b}), { and, more generally, functions that are definable in an $o$-minimal structure \cite{Dries98,Bolte-etal-2007,Bolte-etal-2007b},} thus making quite natural to consider the KL inequality as a standard blanket assumption whenever the objective function is nonconvex. 
The convergence of descent methods under the KL assumption was first considered for { gradient related methods} in \cite{Absil-2005} {  and for proximal methods in \cite{Attouch-Bolte-2009,Attouch-etal-2010}}, where the authors combine the KL inequality with some crucial properties of descent methods to prove the convergence of the iterates to a stationary point of the objective function, under some boundedness assumption. 
In \cite{Attouch-etal-2013}, the authors extend this seminal idea by providing the first {\it abstract descent scheme} in the KL framework, namely a set of abstract properties ensuring the convergence of a generic iterative scheme to a stationary point if combined with the KL inequality. Specific algorithms are then derived from the abstract scheme.\\
This approach, consisting in analyzing and devising new and existing algorithms from an abstract scheme, has been successively adopted by several authors \cite{Bolte-etal-2014,Frankel-etal-2015,Bolte-etal-2018, Ochs-etal-2014,Ochs-2019,Bonettini-Prato-Rebegoldi-2020,Bonettini-Prato-Rebegoldi-2021}. 
In particular, in \cite{Ochs-etal-2014,Ochs-2019}, the authors modify the abstract scheme proposed in \cite{Attouch-etal-2013} in order to include an inertial term inside a classical FB scheme and devise the so-called iPiano (inertial Proximal algorithm for nonconvex optimization) which can be considered as a generalization of the Heavy-Ball method \cite{Polyak-1964,Polyak-1987}.
Similarly, the authors in \cite{Bonettini-Prato-Rebegoldi-2020,Bonettini-Prato-Rebegoldi-2021} adapt the abstract descent scheme to their proposed method VMILA (Variable Metric Inexact Linesearch Algorithm); yet, unlike in \cite{Ochs-etal-2014}, the abstract scheme is designed to include an implementable inexactness criterion for the computation of the proximal point. 

\silviacorr{The aim of this paper is to provide new theoretical and algorithmic tools able for effectively solving problem \eqref{min_intro} in very general settings, i.e., when $f_0$ is not necessarily convex and/or the proximity operator of $f_1$ is not necessarily available in closed form. These difficulties are very common in the framework of inverse problems, for example when the acquisition model is nonlinear or needs to be blindly estimated \cite{Ayers-Dainty-1988}, leading to a nonconvex data fidelity term, and/or the regularization term involves Total-Variation-like functionals or sparsity in nontrivial spaces, so that its proximal operator cannot be computed in closed form or is too costly to compute \cite{Bach-2012,Bonettini-Loris-Porta-Prato-Rebegoldi-2017,Bonettini-Rebegoldi-Ruggiero-2018,Villa-etal-2013}.
}

\silviacorr{More specifically, we give two different types of contributions: (i) From the theoretical point of view, we propose a novel abstract descent scheme for proving the convergence of iterative methods under the KL assumption, which includes as special cases the abstract schemes in \cite{Ochs-2019} and in \cite{Bonettini-Prato-Rebegoldi-2021}, which are currently considered as state-of-the-art. Notably, our proposed abstract scheme provides the theoretical foundation for FB algorithms that include both an inertial term in the iteration rule and an inexact computation of the proximity operator. (ii) In the second part of the paper, we materialize this new theoretical framework into two novel inexact inertial FB algorithms, which improve the modelling flexibility and the practical performance, while providing the same convergence guarantees, compared to other FB-type algorithms. The first algorithm, denominated i$^2$Piano (inertial inexact Proximal algorithm for nonconvex optimization), can be considered as an inexact version of the proximal Heavy-Ball algorithm \cite{Ochs-etal-2014,Ochs-2019} equipped with a backtracking procedure based on a local version of the Descent Lemma. The second one is denoted iPila (inertial Proximal inexact line--search algorithm), which features an inertial--like step followed by a linesearch procedure along the descent direction of a suitable merit function. Such a linesearch strategy allows to compute the inexact proximal point only once per iteration, unlike the backtracking procedure of i$^2$Piano. 
}

The paper is organized as follows. In Section 2 some basic notions on variational analysis and the definition of the KL property are reported. In Section 3 the proposed abstract scheme is presented and its convergence properties analysed. Section \ref{sec:applications} is devoted to the design of the two algorithms i$^2$Piano and iPila. Their analysis within the abstract framework presented in Section 3 is performed in Section \ref{sec:convergence}. Finally, two numerical tests on image deblurring problems are reported in Section \ref{sec:numerical}.

\section{Preliminaries}

In the remainder of the paper, we denote with $\bR=\R\cup \{-\infty,+\infty\}$ the extended real numbers set and $\R^{n\times n}$ the set of $n\times n$ real-valued matrices, while $\|\cdot\|$ denotes the Euclidean norm. 
Given a function $\F:\R^n\to\bR$ and denoting with $\operatorname{dom}(\F)=\{x\in\R^n: \ \F(x)<+\infty\}$ the domain of $\F$, we say that $\F$ is proper if $\operatorname{dom}(\F)\neq\emptyset$ and $\F$ is finite on $\operatorname{dom}(\F)$. The distance operator of a point $x\in\R^n$ to a set $\Omega\subset \R^n$ is defined as
\begin{equation*}
\dist(x,\Omega) = \inf_{y\in\Omega} \|x-y\|.
\end{equation*}
Observe that, if $\Omega = \Omega_1\times\Omega_2$, where $\Omega_1\subset\R^{n_1}$, $\Omega_2\subset\R^{n_2}$, with $n_1+n_2=n$, then for all $x=(x_1,x_2)\in\R^n$, with $x_1\in\R^{n_1}$, $x_2\in\R^{n_2}$, we have
\begin{equation}\label{eq:cartesian}
\dist(x,\Omega) = \sqrt{\dist(x_1,\Omega_1)^2+\dist(x_2,\Omega_2)^2}. 
\end{equation}
This follows by observing that $\|x-z\|^2 = \|x_1-z_1\|^2 + \|x_2-z_2\|^2$ for all $z=(z_1,z_2)\in\Omega$.
\begin{definition}\label{subdiff}
Given a proper, lower semicontinuous function $\F:\R^n\rightarrow \bR$, the \emph{Fr\'{e}chet subdifferential} of $\F$ at $\overline{z}\in\dom(\F)$ is defined as the set \cite[Definition 8.3(a)]{Rockafellar-Wets-1998}
\begin{equation*}
\hat\partial \F(\overline{z}) = \left\{w\in\R^n : \liminf_{u\to \overline{z},u\neq \overline{z}} \frac {\F(u)-\F(\overline{z})-(u-\overline{z})^Tw}{\|u-\overline{z}\|}\geq 0\right\}.
\end{equation*} 
Furthermore, the \emph{limiting subdifferential} of $\mathcal{F}$ at $\overline{z}$ is given by \cite[Definition 8.3(b)]{Rockafellar-Wets-1998}
\begin{equation*}
\partial \mathcal{F}(\overline{z}) = \{w\in\R^n : \exists \  z^{(k)}\to \overline{z}, \ \mathcal{F}(z^{(k)})\to \F(\overline{z}), \ w^{(k)} \in \hat\partial \F(z^{(k)})\to w \ {\text{as}} \ k\to \infty\}.
\end{equation*}
\end{definition}

\begin{definition}\label{critical}
Let $\F:\R^n\rightarrow \bR$ be a proper, lower semicontinuous function. A point $\overline{z}\in\R^n$ is stationary for $\F$ if $0\in\partial \F(\overline{z})$.
\end{definition}

Let us introduce the lazy slope of $\mathcal{F}$ at $\overline{z}$, which is given by \cite[p. 877]{Frankel-etal-2015}
\begin{equation*}
\|\partial \F(\overline{z})\|_- = \inf_{v\in\partial \F(\overline{z})}\|v\|.
\end{equation*}
It is then easy to prove the following sufficient criterion for establishing if a point $\overline{z}\in\R^n$ is stationary for the function $\mathcal{F}$.

\begin{lemma}\cite[Lemma 2.1]{Frankel-etal-2015}\label{suff_criterion}
Let $\F:\R^n\rightarrow \bR$ be a proper, lower semicontinuous function and $\overline{z}\in\R^n$. If there exists $\{z^{(k)}\}_{k\in\N}\subset \R^n$ such that $z^{(k)}\rightarrow \overline{z}$, $\F(z^{(k)})\rightarrow \F(\overline{z})$ and $\displaystyle\liminf_{k\rightarrow \infty}\|\partial \mathcal{F}(z^{(k)})\|_-=0$, then $0\in\partial \F(\overline{z})$.
\end{lemma}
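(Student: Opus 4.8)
The plan is to deduce $0\in\partial\F(\overline z)$ directly from Definition \ref{subdiff}, via a diagonal extraction. First I would exploit the hypothesis $\liminf_{k\to\infty}\|\partial\F(z^{(k)})\|_-=0$ to pass to a subsequence $\{z^{(k_j)}\}_{j\in\N}$ along which $\|\partial\F(z^{(k_j)})\|_-\to 0$. In particular $\partial\F(z^{(k_j)})\neq\emptyset$ for every $j$, so I can pick $v^{(j)}\in\partial\F(z^{(k_j)})$ with $\|v^{(j)}\|\leq\|\partial\F(z^{(k_j)})\|_-+\tfrac1j$, which forces $v^{(j)}\to 0$.

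The catch is that Definition \ref{subdiff} requires the approximating subgradients to lie in the \emph{Fr\'echet} subdifferential, whereas $v^{(j)}$ only lies in the \emph{limiting} one. To fix this I would unwind the definition of $\partial\F(z^{(k_j)})$: for each fixed $j$ there exist sequences $u^{(j,m)}\to z^{(k_j)}$, $\F(u^{(j,m)})\to\F(z^{(k_j)})$ and $w^{(j,m)}\in\hat\partial\F(u^{(j,m)})$ with $w^{(j,m)}\to v^{(j)}$ as $m\to\infty$. The key step is then the diagonalization: for each $j$ choose $m_j$ large enough that $\|u^{(j,m_j)}-z^{(k_j)}\|\leq\tfrac1j$, $|\F(u^{(j,m_j)})-\F(z^{(k_j)})|\leq\tfrac1j$ and $\|w^{(j,m_j)}-v^{(j)}\|\leq\tfrac1j$, and set $\zeta^{(j)}:=u^{(j,m_j)}$, $\omega^{(j)}:=w^{(j,m_j)}\in\hat\partial\F(\zeta^{(j)})$.

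Finally I would combine these estimates with $z^{(k_j)}\to\overline z$, $\F(z^{(k_j)})\to\F(\overline z)$ and $v^{(j)}\to 0$ through the triangle inequality to obtain $\zeta^{(j)}\to\overline z$, $\F(\zeta^{(j)})\to\F(\overline z)$ and $\omega^{(j)}\to 0$, with $\omega^{(j)}\in\hat\partial\F(\zeta^{(j)})$ for every $j$. By Definition \ref{subdiff} this is precisely the assertion $0\in\partial\F(\overline z)$. The main (and only) obstacle is the bookkeeping in the double-indexed extraction, i.e.\ making sure the $\F$-attentive convergence $\F(\zeta^{(j)})\to\F(\overline z)$ is preserved through the diagonal choice; beyond that, the argument is just a matter of unravelling the definitions, with no quantitative estimate needed. (Equivalently, one could invoke the robustness/outer semicontinuity of the limiting subdifferential with respect to $\F$-attentive convergence, but the self-contained diagonal argument above is just as short.)
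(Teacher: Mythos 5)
Your argument is correct: the paper itself does not prove this lemma but simply cites \cite[Lemma 2.1]{Frankel-etal-2015}, and your diagonal extraction is exactly the standard argument, amounting to a self-contained proof of the outer semicontinuity of the limiting subdifferential under $\F$-attentive convergence, which is the route the cited reference relies on. The only cosmetic point is the nonemptiness of $\partial\F(z^{(k_j)})$: since $\|\partial\F(z)\|_-=+\infty$ when the subdifferential is empty, finiteness of the lazy slope along your subsequence (after discarding finitely many indices) already guarantees it, exactly as you assert.
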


\begin{definition}\label{KL}
Let $\F:\R^n \longrightarrow \bR$  be a proper, lower semicontinuous function. The function $\F$ is said to have the KL property at $\overline{z} \in \dom(\partial \F)$ if there exist $\upsilon \in (0,+\infty]$, a neighborhood $U$ of  $\overline{z}$, a continuous concave function $\phi:[0,\upsilon)\longrightarrow [0,+\infty)$ with $\phi(0)=0$, $\phi\in C^1(0,\upsilon)$, $\phi'(s) > 0$ for all $s \in (0,\upsilon)$, such that the following inequality is satisfied 
\begin{equation*}
\phi'(\F(z)-\F(\overline{z})) \|\partial \F(z)\|_- \geq 1
\end{equation*}
for all $z \in U \cap \{z\in\R^n: \F(\overline{z}) < \F(z) < \F(\overline{z}) + \upsilon\}$.\\
If $\F$ satisfies the KL property at each point of $\dom(\F)$, then $\F$ is called a KL function.
\end{definition}
{  The KL property is always satisfied around non stationary points \cite[Remark 4(b)]{Attouch-etal-2010}, whereas it might fail to hold around stationary points. If we assume $\mathcal{F}$ is continuously differentiable with $\mathcal{F}(\overline{z})=0$, then the KL property can be rewritten as 
\begin{equation}\label{eq:ine_KL}
    \|\nabla(\phi \circ \mathcal{F})(z)\|\geq 1
\end{equation} 
{ around the stationary point $\overline{z}$, which means that the values of $\mathcal F$ can be reparametrized by a so-called \emph{desingularization function} $\phi$ such that a ``singular region'', i.e., a region in which the gradients are arbitrarily small, turns into a ``regular region'', i.e., a neighbourhood of $\overline{z}$ where the gradients are bounded away from zero \cite{Attouch-etal-2013}.}
}

In \cite[Lemma 6]{Bolte-etal-2014}, the following \emph{uniformized} version of the KL property is introduced, where the KL inequality holds with the same desingularization function for all points in a suitable neighborhood of a compact set where the function is constant.
\begin{lemma}\label{lemma:UKL} Let $\F:\R^n\to \bR$ be a proper, lower semicontinuous function and $X\subset \R^n$ a compact set. Suppose that $\F$ satisfies the KL property at each point belonging to $X$ and that $\F$ is constant over $X$, i.e., $\F(\bar x) = \bar \F\in \R$ for all $\bar x\in X$. Then, there exists $\mu,\upsilon>0$ and a function $\phi$ as in Definition \ref{KL} such that 
\begin{equation}\label{UKL}
\phi'(\F(z)-\bar \F) \|\partial \F(z)\|_- \geq 1, \quad  \forall z\in \bar B 
\end{equation} 
where the set $\bar B$ is defined as
\begin{equation}\label{UKLbound}
\bar B = \{z\in\R^n: \dist(z,X)<\mu \mbox{ and } \bar \F < \F(z) < \bar \F + \upsilon\}.
\end{equation}
\end{lemma}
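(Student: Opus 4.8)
The plan is to reduce the uniform statement to the pointwise KL property by a compactness argument on $X$, and then to merge the finitely many desingularization functions obtained in this way into a single admissible one. First, for every $\bar x\in X$ the KL property at $\bar x$ supplies parameters $\upsilon_{\bar x}>0$, a neighborhood $U_{\bar x}$ of $\bar x$ and a desingularization function $\phi_{\bar x}$ as in Definition~\ref{KL}; shrinking $U_{\bar x}$ if necessary, we may assume $U_{\bar x}=B(\bar x,\epsilon_{\bar x})$ is an open ball, and the KL inequality at $\bar x$ still holds on this smaller set. The balls $\{B(\bar x,\epsilon_{\bar x}/2)\}_{\bar x\in X}$ form an open cover of the compact set $X$, so one extracts a finite subcover with centers $x_1,\dots,x_p$, radii $\epsilon_1/2,\dots,\epsilon_p/2$, parameters $\upsilon_1,\dots,\upsilon_p$ and functions $\phi_1,\dots,\phi_p$.

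Next I would set $\upsilon=\min_{i=1,\dots,p}\upsilon_i>0$ and choose $\mu>0$ small enough, e.g. $\mu<\tfrac12\min_i\epsilon_i$, so that $\dist(z,X)<\mu$ forces $z\in B(x_i,\epsilon_i)$ for at least one index $i$: picking $\bar x\in X$ with $\|z-\bar x\|<\mu$ and $i$ with $\bar x\in B(x_i,\epsilon_i/2)$, the triangle inequality gives $\|z-x_i\|<\mu+\epsilon_i/2<\epsilon_i$. For the desingularization function I would take $\phi:=\sum_{i=1}^p\phi_i$ on $[0,\upsilon)$. Being a finite sum of continuous concave functions that vanish at $0$, are $C^1$ on $(0,\upsilon)$ and have strictly positive derivative there, $\phi$ inherits all these properties and is therefore admissible in the sense of Definition~\ref{KL}.

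Finally I would verify inequality~\eqref{UKL}. Let $z\in\bar B$, so $\dist(z,X)<\mu$ and $\bar\F<\F(z)<\bar\F+\upsilon$. By the choice of $\mu$ there is an index $i$ with $z\in B(x_i,\epsilon_i)=U_{x_i}$, and since $x_i\in X$ we have $\F(x_i)=\bar\F$; together with $\bar\F<\F(z)<\bar\F+\upsilon\le\bar\F+\upsilon_i$, this places $z$ in the region where the KL inequality at $x_i$ is valid, whence $\phi_i'(\F(z)-\bar\F)\,\|\partial\F(z)\|_-\ge 1$. Because each summand $\phi_j'$ is positive on $(0,\upsilon)$ and $\F(z)-\bar\F\in(0,\upsilon)$, we get $\phi'(\F(z)-\bar\F)=\sum_{j=1}^p\phi_j'(\F(z)-\bar\F)\ge\phi_i'(\F(z)-\bar\F)$, and multiplying by $\|\partial\F(z)\|_-\ge 0$ yields the claim.

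The only genuinely delicate point is the gluing in the second step: one must combine the local desingularization functions without destroying the structural requirements of Definition~\ref{KL}. Taking the pointwise maximum is the naive choice but fails to preserve concavity; summing is what makes the argument go through, and it relies crucially on the positivity of each $\phi_i'$, which is exactly what allows $\phi'$ to dominate any single $\phi_i'$. A secondary, routine care is to keep the argument $\F(z)-\bar\F$ inside the common domain $[0,\upsilon)$ of all the $\phi_i$'s, which is precisely what the definition of $\bar B$ in~\eqref{UKLbound} ensures.
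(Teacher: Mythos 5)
Your proof is correct: the compactness covering of $X$, the choice $\upsilon=\min_i\upsilon_i$, $\mu$ small relative to the covering radii, and the summed desingularization function $\phi=\sum_i\phi_i$ (whose derivative dominates each $\phi_i'$) all work exactly as you argue. The paper itself states this lemma without proof, quoting it from \cite[Lemma 6]{Bolte-etal-2014}, and your argument is essentially the standard proof given in that reference.
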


\section{Abstract algorithm scheme}\label{sec:abstract}
In the following, we are interested in proving the convergence of an abstract descent algorithm to a stationary point of a proper, lower semicontinuous function $\F$. Such abstract algorithm is defined through a specific set of properties that are shared by several first-order methods designed for nonsmooth nonconvex optimization, including gradient descent methods \cite{Absil-2005}, forward--backward methods \cite{Bonettini-Loris-Porta-Prato-Rebegoldi-2017,Chouzenoux-etal-2014,Ochs-etal-2014} and block coordinate methods \cite{Attouch-etal-2013,Bonettini-Prato-Rebegoldi-2018,Frankel-etal-2015}. Similarly to other abstract descent algorithms in the KL framework, the two main ingredients guaranteeing the convergence of our scheme are the {\it sufficient decrease condition} and the {\it relative error condition}, the latter being related to the minimization subproblem that one has to (inexactly) solve at each iteration of a first-order method. However, unlike in previous works in the literature, we require that the relative error condition is satisfied at a point that might be different from the actual iterate generated by the method. As we will see in Section \ref{sec:applications}, this simple modification allows to circumvent the issue of the actual implementation of the relative error condition, \silviacorr{which seems rather difficult to impose in practice \cite{Bonettini-Prato-Rebegoldi-2020,Noll-2014}}, allowing to include inexact forward-backward methods equipped with an implementable inexactness criterion for the solution of the minimization subproblem.
\def\tk{ s^{(k)}}
\def\tkk{ s^{(k+1)}}
\def\tkm{ s^{(k-1)}}
\begin{condition}[Abstract algorithm scheme]\label{definition:abstract} Let $\F:\R^{n}\times\R^m\to \bR$ be a proper, lower semicontinuous function and $\P:\R^n\times \R^q\to\bR$ a proper, lower semicontinuous, bounded from below function. Consider two sequences $\{\xk\}_{k\in\N}$, $\{\uk\}_{\kinN}$ in $\R^n$, a sequence $\{\rk\}_\kinN$ in $\R^m$, a sequence $\{\tk\}_{\kinN}$ in $\R^q$ and a sequence of nonnegative real numbers $\{d_k\}_\kinN$ such that the following relations are satisfied.
\begin{enumerate}[label={\it[H\arabic*]}]
\item \label{H1}There exists a sequence of positive real numbers $\{a_k\}_{k\in\N}$ such that 
\begin{equation*}
\P(\xkk,\tkk)+a_k d_k^2 \leq \P(\xk,\tk), \quad \forall \ k\geq 0.
\end{equation*}
\item \label{H2} There exists a sequence of nonnegative real numbers $\{r_k\}_\kinN$ with $\lim\limits_{k\to\infty} r_k = 0$ such that
\begin{equation*}
\P(\xkk,\tkk)\leq \F(\uk,\rk)\leq \P(\xk,\tk)+r_k, \quad \forall \ k\geq 0.
\end{equation*}
\item \label{H3} There exist $b>0$, a sequence of positive real numbers $\{b_k\}_{k\in\N}$, a summable sequence of nonnegative real numbers $\{\zeta_k\}_{k\in\N}$, a non-empty finite index set $I\subset \mathbb{Z}$ and $\theta_i\geq 0$, $i\in I$ with $\sum_{i\in I} \theta_i=1$ such that, setting $d_j=0$ for $j\leq 0$, we have
\begin{equation*}
b_{k+1}\|\partial \F(\uk,\rk)\|_-\leq b \sum_{i\in I}\theta_id_{k+1-i} + \zeta_{k+1}, \quad \forall \ k\geq0.
\end{equation*}
\item\label{H6} If $\{(x^{(k_j)},\rkj)\}_{j\in\N}$ is a subsequence of $\{(\xk,\rk)\}_\kinN$ converging to some $ (x^*,\rho^*)\in\R^n\times \R^m$, then we have for $\{u^{(k_j)}\}_{j\in\mathbb{N}}$:
\begin{equation*}
\lim_{j\to\infty}\|\ukj-x^{(k_j)}\|=0, \quad \lim_{j\to\infty} \F(\ukj,\rkj) = \F(x^*,\rho^*).
\end{equation*}
\item \label{H4} There exists a positive real number $p>0$ and $k'\in\mathbb{Z}$ such that
\begin{equation*}
\|\xkk-\xk\|\leq p d_{k+k'}, \quad \forall \ k\geq 0.
\end{equation*}
\item \label{H7} The sequences $\{a_k\}_{k\in\N}$, $\{b_k\}_{k\in\N}$ satisfy the following conditions
$$
\sum_{k=0}^{+\infty}b_k=+\infty, \quad \sup_{k\in\N}\frac{1}{b_ka_k}<+\infty, \quad \inf_{k\in\N}a_k>0. 
$$
\end{enumerate}
\end{condition}
The abstract scheme given in Conditions \ref{definition:abstract} can be seen as a further extension of the one proposed in \cite{Ochs-2019}, which is indeed recovered by setting $\P=\F$, $\uk = \xkk$ and $\rk\equiv s^{(k)}$. In the following, we discuss in detail conditions \ref{H1}-\ref{H7} and their relation with the abstract scheme in \cite{Ochs-2019}.

\begin{itemize}
\item { Condition \ref{H1} is the {\it sufficient decrease condition}, which is imposed on the proper, lower semicontinuous function $\Phi$}. The quantity $a_kd_k^2$ measures the amount of the decrease. Note that, in earlier works based on the KL property \cite{Attouch-etal-2013,Bolte-etal-2014,Bonettini-Loris-Porta-Prato-Rebegoldi-2017,Frankel-etal-2015}, condition \ref{H1} is usually presented by setting $d_k=\|x^{(k+1)}-\xk\|_2$, $\tk\equiv 0$ and $\Phi(x,s)=f(x)$, being $f$ the function to minimize. { The generalized condition reported here is the same one introduced in the recent work \cite{Ochs-2019}, although here it is imposed on a function $\Phi$ that is different from the function $\F$ appearing in condition \ref{H3}, whereas in \cite{Ochs-2019} the two functions were identical.
}

\item Condition \ref{H2} is crucial in the convergence proof of Theorem \ref{thm:convergence}. Indeed it ensures that the sequence $\{\F(\uk,\rk)\}_{k\in\N}$ converges to a limit value $\F^*\in\R$, thus allowing to apply the uniformized KL property at the point $(\uk,\rk)$ for all sufficiently large $k$ and, furthermore, it enables the combination of \ref{H3} and \ref{H1} with the KL inequality. Imposing condition \ref{H2} is required only when $\Phi\neq \F$, this is why it does not appear in \cite{Ochs-2019}. 

\item Condition \ref{H3} is the so-called {\it relative error condition}, which is related to the (possibly) inexact solution of the minimization subproblem performed at each iteration of a first-order method. In the previous literature \cite{Attouch-etal-2013,Bolte-etal-2014,Bonettini-Loris-Porta-Prato-Rebegoldi-2017,Frankel-etal-2015}, such condition is usually employed by setting $u^{(k)}=x^{(k+1)}$, $d_k=\|x^{(k+1)}-\xk\|_2$, $\rk \equiv 0$, $I=\{1\}$, $\theta_1=1$ and $\F(u,\rho)=f(u)$, being $f$ the function to minimize. In \cite{Ochs-2019}, a general positive term $d_k$, a finite index set $I$, a variable parameter $\tk$ and a generic { merit} function $\F$ are employed, while keeping $\uk=x^{(k+1)}$ and $\rk\equiv 0$. Here we also allow the sequence $\{\uk\}_{k\in\N}$ to be distinct from $\{x^{(k)}\}_{k\in\N}$ and the parameters $\{\rk\}_{k\in\N}$ to vary at each iteration. The reason to do so comes from the fact that condition \ref{H3} is hard to enforce algorithmically on $x^{(k+1)}$ when the minimization subproblem is solved inexactly, as noted in \cite{Bonettini-Prato-Rebegoldi-2020,Bonettini-Prato-Rebegoldi-2021,Noll-2014}. However, if a specific, implementable inexactness criterion is adopted for the solution of the subproblem, then the same condition holds for a { merit} function $\F$ evaluated at a different iterate $(u^{(k)},\rk)$. { For instance, this is observed in the convergence analysis of VMILA algorithm} \cite{Bonettini-Loris-Porta-Prato-Rebegoldi-2017,Bonettini-Prato-Rebegoldi-2020,Bonettini-Prato-Rebegoldi-2021}. In \cite{Bonettini-Loris-Porta-Prato-Rebegoldi-2017}, VMILA is included in the KL framework by setting $\Phi(x,s)=\F(x,\rho)=f(x)$, $\uk=x^{(k+1)}$, $d_k=\|x^{(k+1)}-\xk\|_2$, $\rk \equiv \tk\equiv 0$ and noting that, in so doing, the relative error condition holds only by exactly computing the proximal operator. In \cite{Bonettini-Prato-Rebegoldi-2020}, the authors include VMILA in the abstract scheme in a different way, by using $\Phi(x,s)=f(x)$, a { merit} function $\F$ defined upon the concept of forward--backward envelope of $f$ \cite{Stella-etal-2017}, the iterate $\uk$ as the inexact proximal-gradient point $\tyk$, $d_k=\|x^{(k+1)}-\xk\|_2$ and $\rk$ as the error parameter due to the computation of $\tyk$. Finally, in \cite{Bonettini-Prato-Rebegoldi-2021}, VMILA is framed by setting $\Phi(x,s)=f(x)$, $\F(u,\rho)=f(u)+\rho^2/2$, the iterate $\uk$ as the exact proximal--gradient point $\yk$ and $d_k=-h^{(k)}(\tyk)$, where $h^{(k)}$ is the function to minimize when computing the approximation $\tyk$ of the exact point $\yk$. In other words, from the analysis in \cite{Bonettini-Prato-Rebegoldi-2021}, it turns out that we are able to enforce the relative error condition at the exact point $\yk$, which we do not need to compute explicitly, provided that the approximation $\tyk$ is computed using a specific criterion.
%

\item Condition \ref{H6} is the analogue of the so-called {\it continuity condition} in \cite{Ochs-2019}. Here we impose the property for all converging subsequences $(x^{(k_j)},\rho^{(k_j)})$, whereas in \cite{Ochs-2019} it is only required the existence of one such subsequence. This is because, unlike in \cite{Ochs-2019}, we need to ensure that the distance between $\{(\uk,\rk)\}_{k\in\N}$ and the limit set of $\{(\xk,\rk)\}_{k\in\N}$ converges to $0$ (see Lemma \ref{lemma:comp2}\ref{lemcompiv2}).

\item Condition \ref{H4} is also called the {\it distance condition}. It states the connection between the general term $d_k$ and the Euclidean norm, which is fundamental in order to prove the finite length of the sequence $\{\xk\}_{k\in\N}$. Note that the distance condition given in \cite{Ochs-2019} is slightly more general than \ref{H4}; however, that condition alone allows to prove only the finite length of the sequence $\{d_k\}_{k\in\N}$, which in general does not imply the convergence of the sequence $\{\xk\}_{k\in\N}$. In order to obtain the strongest result, condition \ref{H4} is then imposed in \cite[Theorem 10]{Ochs-2019}.

\item Condition \ref{H7} is the same as the {\it parameter condition} in \cite{Ochs-2019}. These requirements on the sequences $\{a_k\}_{k\in\N}$, $\{b_k\}_{k\in\N}$ were first introduced in \cite{Frankel-etal-2015} in order to generalize the abstract descent scheme in \cite{Attouch-etal-2013}.
\end{itemize}

In the remainder of this section, we will denote with $\{\xk\}_{k\in\N}$, $\{\uk\}_{\kinN}$, $\{\rk\}_{k\in\N}$, $\{\tk\}_{k\in\N}$ the sequences complying with Conditions \ref{definition:abstract}. Furthermore, let us define the set of all limit points of the sequence $\{(\xk,\rk)\}_\kinN$:
\begin{equation*}
\Omega^*(x^{(0)},\rho^{(0)}) = \{(x^*,\rho^*)\in\R^n\times \R^m: \exists \ \{k_j\}_{j\in\N}\subset \N \mbox{ such that } (\xkj,\rkj) \rightarrow (x^*,\rho^*)\}.
\end{equation*}
Note that the set $\Omega^*(x^{(0)},\rho^{(0)})$ can be written as
\begin{equation*}
\Omega^*(x^{(0)},\rho^{(0)}) = X^*(x^{(0)})\times R^*(\rho^{(0)})
\end{equation*}
where $X^*(x^{(0)})=\{x^*\in\R^n: \exists \ \{k_j\}_{j\in\N}\subset \N \mbox{ such that } \xkj \rightarrow x^*\}\subset \R^n$, $R^*(\rho^{(0)})=\{\rho^*\in\R^m: \exists \ \{k_j\}_{j\in\N}\subset \N \mbox{ such that } \rkj \rightarrow \rho^*\}\subset \R^m$.
\begin{lemma}\label{lemma:comp2} Let Conditions \ref{definition:abstract} be satisfied. Suppose that $\{(\xk,\rk)\}_\kinN$ is a bounded sequence. Then the following facts hold true.
\begin{enumerate}[label={\it(\roman*)}]
\item\label{lemcompi2} $\Omega^*(x^{(0)},\rho^{(0)})$ is nonempty and compact.
\item\label{lemcompii2} There exists $\F^*\in\R$ such that $\displaystyle\lim_{k\to\infty} \P(\xk,\tk)=\lim_{k\to\infty} \F(\uk,\rk) = \F^*$.
\item\label{lemcompiv2} We have
\begin{equation*}
\lim_{k\to\infty }\dist((\xk,\rk),\Omega^*(\x^{(0)},\rho^{(0)})) = \lim_{k\to\infty }\dist((\uk,\rk),\Omega^*(\x^{(0)},\rho^{(0)})) = 0.
\end{equation*}
\item\label{lemcompiii2} We have $\F(x^*,\rho^*)=\F^*$, $\forall \ (x^*,\rho^*)\in \Omega^*(x^{(0)},\rho^{(0)})$. 
\end{enumerate}
\end{lemma}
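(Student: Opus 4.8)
The plan is to prove the four claims of Lemma~\ref{lemma:comp2} roughly in the order they are stated, extracting the needed consequences from Conditions~\ref{definition:abstract} as we go. For \ref{lemcompi2}, since $\{(\xk,\rk)\}_\kinN$ is bounded, the Bolzano--Weierstrass theorem immediately gives that $\Omega^*(x^{(0)},\rho^{(0)})$ is nonempty. Compactness follows from a standard argument: $\Omega^*$ can be written as a countable intersection $\bigcap_{N\in\N}\overline{\{(\xk,\rk):k\geq N\}}$ of closed sets, all contained in a fixed compact ball, hence it is closed and bounded; alternatively one argues directly with a diagonal subsequence that any sequence of limit points has a limit which is again a limit point of the original sequence.

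For \ref{lemcompii2}, I would first use \ref{H1}: the sequence $\{\P(\xk,\tk)\}_\kinN$ is nonincreasing (because $a_kd_k^2\geq 0$) and, since $\P$ is bounded from below, it converges to some $\F^*\in\R$. Summing \ref{H1} over $k$ also yields $\sum_k a_kd_k^2<+\infty$, and combined with $\inf_k a_k>0$ from \ref{H7} this gives $\sum_k d_k^2<+\infty$, in particular $d_k\to 0$ --- a fact I will reuse below. Then \ref{H2} sandwiches $\F(\uk,\rk)$ between $\P(\xkk,\tkk)$ and $\P(\xk,\tk)+r_k$ with $r_k\to 0$, so by the squeeze theorem $\F(\uk,\rk)\to\F^*$ as well.

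For \ref{lemcompiv2}, the statement $\dist((\xk,\rk),\Omega^*)\to 0$ is the classical fact that a bounded sequence converges (in the distance-to-limit-set sense) to its set of cluster points: if not, some subsequence stays at distance $\geq\varepsilon$ from $\Omega^*$, but that subsequence is itself bounded and hence has a further subsequence converging to a point which by definition lies in $\Omega^*$, a contradiction. For the $\{(\uk,\rk)\}$ part I would use $\dist((\uk,\rk),\Omega^*)\leq \|\uk-\xk\| + \dist((\xk,\rk),\Omega^*)$, so it suffices to show $\|\uk-\xk\|\to 0$. This is where \ref{H6} enters: given any subsequence of indices, boundedness extracts a sub-subsequence along which $(\xk,\rk)$ converges, and then \ref{H6} forces $\|\ukj-\xkj\|\to 0$ along it; since every subsequence has such a sub-subsequence with limit $0$, the whole sequence $\|\uk-\xk\|$ tends to $0$. (One could also invoke \ref{H4}, which gives $\|\xkk-\xk\|\leq p\,d_{k+k'}\to0$, but for the $\uk$ estimate the subsequence argument via \ref{H6} is the clean route.)

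For \ref{lemcompiii2}, take $(x^*,\rho^*)\in\Omega^*$ with $(\xkj,\rkj)\to(x^*,\rho^*)$. Apply \ref{H6} to this subsequence: it yields $\F(\ukj,\rkj)\to\F(x^*,\rho^*)$. But by \ref{lemcompii2} the full sequence $\F(\uk,\rk)$ converges to $\F^*$, so $\F(x^*,\rho^*)=\F^*$. The main obstacle here is purely organizational: one must be careful that \ref{H6}, as stated, is keyed to convergence of $(\xk,\rk)$ (not of $(\uk,\rk)$), and delivers both the displacement estimate $\|\ukj-\xkj\|\to0$ and the function-value convergence $\F(\ukj,\rkj)\to\F(x^*,\rho^*)$ simultaneously --- these two outputs are exactly what powers parts \ref{lemcompiv2} and \ref{lemcompiii2} respectively, so the whole proof hinges on invoking \ref{H6} along carefully chosen subsequences rather than on any hard estimate.
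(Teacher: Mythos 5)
Your proof is correct and follows essentially the same route as the paper: \ref{H1} plus boundedness from below gives monotone convergence of $\P(\xk,\tk)$, \ref{H2} squeezes $\F(\uk,\rk)$ to the same limit, and \ref{H6} applied along convergent subsequences of $\{(\xk,\rk)\}_\kinN$ supplies both $\|\uk-\xk\|\to 0$ and the function-value convergence needed for parts \ref{lemcompiv2} and \ref{lemcompiii2}. The only cosmetic difference is that you control $\dist((\uk,\rk),\Omega^*(x^{(0)},\rho^{(0)}))$ directly via the triangle inequality $\dist((\uk,\rk),\Omega^*)\leq\|\uk-\xk\|+\dist((\xk,\rk),\Omega^*)$, whereas the paper splits $\Omega^*(x^{(0)},\rho^{(0)})=X^*(x^{(0)})\times R^*(\rho^{(0)})$ and argues componentwise via \eqref{eq:cartesian}; both reduce to the same use of \ref{H6}.
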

\begin{proof}
(i) Since the sequence $\{(\xk,\rk)\}_\kinN$ is bounded, it admits at least a limit point and, hence, $\Omega^*(x^{(0)},\rho^{(0)})$ is nonempty. Compactness is proved in \cite[Lemma 5]{Bolte-etal-2014}.\\
(ii) From \ref{H1} we have that the sequence $\{\P(\xk,\tk)\}_\kinN$ is nonincreasing and, since $\P$ is bounded from below, there exists $\F^*\in \R$ such that
\begin{equation*}
\lim_{k\to\infty}\P(\xk,\tk)=\F^*.
\end{equation*}
The previous relation combined with \ref{H2} proves Part \ref{lemcompii2}.\\
(iii) Since $\{(\xk,\rk)\}_{\kinN}$ is bounded and by definition of $\Omega^*(x^{(0)},\rho^{(0)})$, we have
$$\lim_{k\to\infty }\dist((\xk,\rk),\Omega^*(\x^{(0)},\rho^{(0)})) =0.$$
Observing that $\Omega^*(x^{(0)},\rho^{(0)})=X^*(\x^{(0)})\times R^*(\rho^{(0)})$ and recalling \eqref{eq:cartesian}, the previous limit implies
\begin{equation*}
\lim_{k\to\infty }\dist(\xk,X^*(\x^{(0)}))= 0, \ \ \ \ \lim_{k\to\infty }\dist(\rk,R^*(\rho^{(0)})) =0.
\end{equation*} 
Combining the boundedness of $\{\xk\}_{\kinN}$ with the definition of $X^*(\x^{(0)})$ and property \ref{H6}, we obtain $\displaystyle\lim_{k\to\infty }\dist(\uk,X^*(\x^{(0)})) =0$, which together with the second limit above yields
\begin{equation*}
\lim_{k\to\infty }\dist((\uk,\rk),\Omega^*(\x^{(0)},\rho^{(0)})) = 0.
\end{equation*} 
(iv) This point follows directly from part \ref{lemcompii2} and \ref{H6}.
\end{proof}
\begin{theorem}\label{thm:convergence} Let Conditions \ref{definition:abstract} be satisfied, and suppose that $\{(\xk,\rk)\}_\kinN$ is a bounded sequence and that $\F$ is a KL function. Then the following statements are true.
\begin{itemize}
\item[(i)] The sequence $\{d_k\}_{k\in\N}$ is summable, i.e., it satisfies
$$
\sum_{k=0}^{+\infty}d_k <+\infty.
$$
\item[(ii)] The sequence $\{\xk\}_{k\in\N}$ has finite length, i.e., it satisfies
$$
\sum_{k=0}^{+\infty}\|\xkk-\xk\| <+\infty
$$ 
and thus $\{\xk\}_{k\in\N}$ is a convergent sequence.
\item[(iii)] If also $\{\rk\}_{k\in\N}$ converges, then the sequence $\{(\xk,\rk)\}_{k\in\N}$ converges to a stationary point for $\F$. 
\end{itemize}
\end{theorem}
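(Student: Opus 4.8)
\emph{Proof strategy.} I would run the standard Kurdyka--\L{}ojasiewicz telescoping argument, taking extra care of the fact that the sufficient decrease \ref{H1} is measured on $\P$ while the KL inequality and the relative error \ref{H3} live on $\F$, evaluated at $\uk$ rather than $\xk$. Set $w_k:=\P(\xk,\tk)-\F^*$ and $v_k:=\F(\uk,\rk)-\F^*$, where $\F^*$ is the common limit furnished by Lemma~\ref{lemma:comp2}\ref{lemcompii2}; by \ref{H1}, $\{w_k\}$ is nonnegative, nonincreasing, and tends to $0$. First I dispose of the degenerate case: if $w_{k_1}=0$ for some $k_1$, then \ref{H1} forces $d_k=0$ for all $k\ge k_1$, \ref{H4} makes $\{\xk\}$ eventually constant, and (i)--(ii) hold trivially while (iii) follows from the last step below; so I may assume $w_k>0$ for all $k$, which by \ref{H2} also gives $v_k\ge w_{k+1}>0$. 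Using boundedness of $\{(\xk,\rk)\}$, Lemma~\ref{lemma:comp2} gives that $\Omega^*:=\Omega^*(x^{(0)},\rho^{(0)})$ is nonempty and compact, that $\F\equiv\F^*$ on it, and (part~\ref{lemcompiv2}) that $\dist((\uk,\rk),\Omega^*)\to 0$. Applying the uniformized KL property (Lemma~\ref{lemma:UKL}) to $\F$ on $\Omega^*$ furnishes $\mu,\upsilon>0$ and a desingularizing function $\phi$; since $v_k,w_k\to 0$, there is $k_0$ such that $w_{k_0}<\upsilon$, $0<v_k<\upsilon$, and $\dist((\uk,\rk),\Omega^*)<\mu$ for all $k\ge k_0$, so that $(\uk,\rk)$ lies in the set $\bar B$ of \eqref{UKLbound} and consequently $\phi'(v_k)\|\partial\F(\uk,\rk)\|_-\ge 1$.

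\emph{The recursive estimate.} Next I would combine this KL inequality with \ref{H3} to obtain $b_{k+1}/\phi'(v_k)\le b\sum_{i\in I}\theta_i d_{k+1-i}+\zeta_{k+1}$ for $k\ge k_0$. In parallel, concavity and the $C^1$ regularity of $\phi$, \ref{H1} shifted by one index (which yields $w_{k+1}-w_{k+2}\ge a_{k+1}d_{k+1}^2$), the inequality $w_{k+1}\le v_k$ from \ref{H2}, and the monotonicity of $\phi'$ give
\begin{equation*}
\Delta_{k+1}:=\phi(w_{k+1})-\phi(w_{k+2})\ \ge\ \phi'(w_{k+1})\,a_{k+1}d_{k+1}^2\ \ge\ \phi'(v_k)\,a_{k+1}d_{k+1}^2 .
\end{equation*}
Multiplying the two estimates (trivially valid when $d_{k+1}=0$), dividing by $a_{k+1}b_{k+1}>0$, invoking $M:=\sup_k\frac{1}{a_kb_k}<\infty$ from \ref{H7}, and using $\sqrt{xy}\le\lambda x+\frac{y}{4\lambda}$, I would reach
\begin{equation*}
d_{k+1}\ \le\ \lambda M\,\Delta_{k+1}+\frac{1}{4\lambda}\Big(b\sum_{i\in I}\theta_i d_{k+1-i}+\zeta_{k+1}\Big),\qquad k\ge k_0,\ \lambda>0 .
\end{equation*}

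\emph{Summation; items (i) and (ii).} Then I would sum this over $k=k_0,\dots,N$ and let $N\to\infty$: $\sum_k\Delta_{k+1}$ telescopes to at most $\phi(w_{k_0+1})<\infty$ since $\phi(w_k)\to\phi(0)=0$; $\sum_k\zeta_{k+1}<\infty$ by \ref{H3}; and, as $\sum_{i\in I}\theta_i=1$, reindexing the finite double sum bounds $\sum_{k}\sum_{i\in I}\theta_i d_{k+1-i}$ by $\sum_{k\ge 1}d_k$ plus a constant depending only on $I$ and finitely many $d_k$ (using $d_j=0$ for $j\le 0$). Picking $\lambda$ large enough to absorb the $\frac{b}{4\lambda}\sum_k d_k$ term into the left-hand side then gives a bound on $\sum_{k=k_0}^{N+1}d_k$ independent of $N$, which is (i). Item (ii) follows immediately from \ref{H4}: $\sum_k\|\xkk-\xk\|\le p\sum_k d_{k+k'}<\infty$, so $\{\xk\}$ is Cauchy and converges to some $x^*$.

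\emph{Stationarity (item (iii)) and the main obstacle.} Assuming finally $\rk\to\rho^*$, I would note $(\xk,\rk)\to(x^*,\rho^*)$, hence $\Omega^*=\{(x^*,\rho^*)\}$ and $\F(x^*,\rho^*)=\F^*$ by Lemma~\ref{lemma:comp2}\ref{lemcompiii2}. If $\|\partial\F(\uk,\rk)\|_-\ge\varepsilon>0$ for all large $k$, then \ref{H3} gives $\varepsilon b_{k+1}\le b\sum_{i\in I}\theta_i d_{k+1-i}+\zeta_{k+1}$, whose sum over $k$ is finite by (i) and \ref{H3} while $\sum_k b_{k+1}=+\infty$ by \ref{H7} --- a contradiction; hence $\liminf_k\|\partial\F(\uk,\rk)\|_-=0$. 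Since \ref{H6}, applied to the convergent sequence $(\xk,\rk)$, gives $\uk\to x^*$ and $\F(\uk,\rk)\to\F(x^*,\rho^*)$, Lemma~\ref{suff_criterion} applied to $\{(\uk,\rk)\}$ yields $0\in\partial\F(x^*,\rho^*)$, i.e.\ $(x^*,\rho^*)$ is stationary for $\F$. I expect the genuine difficulty to be the bookkeeping forced by $\P\neq\F$: one must use \ref{H2} both to identify the limit $\F^*$ and to trade the descent quantity $w_{k+1}-w_{k+2}\ge a_{k+1}d_{k+1}^2$ for an estimate in terms of $\phi'(v_k)$, which introduces a one-index offset that the telescoping sum has to absorb; the remaining technicalities (the shifted index set $I$, the choice of $\lambda$) are routine.
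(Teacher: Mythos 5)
Your argument is correct and is essentially the paper's own proof: you apply the uniformized KL property (Lemma~\ref{lemma:UKL}) on the compact limit set, combine the KL inequality at $(u^{(k)},\rho^{(k)})$ with \ref{H1}--\ref{H3} and the concavity of $\phi$ into a recursive estimate on $d_{k+1}$, telescope and absorb the $d$-sum (your free parameter $\lambda>b/4$ plays the role of the paper's factor $2$ from $2\sqrt{uv}\le u+v$ after folding constants into $\phi_k$), deduce (ii) from \ref{H4}, and prove (iii) exactly as the paper does by summing \ref{H3} and invoking \ref{H6}, \ref{H7} and Lemma~\ref{suff_criterion}. The only cosmetic deviations are your upfront dismissal of the degenerate case $\Phi(x^{(k)},s^{(k)})=\mathcal{F}^*$ (the paper instead distinguishes $d_k=0$ and $d_k>0$ at each index) and the unstated bound on the finitely many terms $d_j$ with $j>N+1$ produced by nonpositive indices in $I$ when reindexing, which the paper settles by noting $d_k\to 0$.
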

\begin{proof}
(i) By Lemma \ref{lemma:comp2}\ref{lemcompi2}-\ref{lemcompiii2}, the function $\F$ is constant over the compact set $\Omega^*(x^{(0)},\rho^{(0)})$, therefore we can apply Lemma \ref{lemma:UKL}. Let $\upsilon,\mu,\phi,\bar B$ be as in Lemma \ref{lemma:UKL}. Thanks to Lemma \ref{lemma:comp2}\ref{lemcompii2}-\ref{lemcompiv2} and \ref{H2}, there exists a positive integer $k_0$ such that  
\begin{equation}\label{upsilon}
\P(\xk,\tk)+r_k < \F^* + \upsilon, \quad \dist( (\uk,\rk),\Omega^*(x^{(0)},\rho^{(0)}) ) < \mu
\end{equation}
for all $k\geq k_0$.
Without loss of generality, up to a translation of the iteration index, we can assume $k_0=0$.\\
Let us now set $c=\sup_k 1/(a_kb_k)$, where $c<+\infty$ due to \ref{H7}, $\zeta_k'=\zeta_k/b$ and 
\begin{equation*}
\phi_k = \frac{b}{c}(\phi(\P(\xk,\tk)-\F^*)- \phi(\P(\xkk,\tkk)-\F^*))
\end{equation*}
and prove that
\begin{equation}\label{th2:eq1}
2d_k\leq \phi_k+\sum_{i\in I}\theta_id_{k-i}+\zeta_k', \quad \forall \ k\geq 1.
\end{equation}
We first observe that the definition of $\phi_k$ is well posed, since $\F^*$ is the limit of the nonincreasing sequence $\{\P(\xk,\tk)\}_\kinN$ (see Lemma \ref{lemma:comp2}) and, hence, we have $\P(\xk,\tk)\geq \F^*$, $\forall  \ k\in\N$. Moreover, the monotonicity of both function $\phi$ and sequence $\{\P(\xk,\tk)\}_\kinN$ implies that $\phi_k\geq 0$, $\forall  \ k\in\N$.\\
Let us now consider the two cases $d_k=0$ and $d_k>0$ separately.\\
If $d_k=0$, inequality \eqref{th2:eq1} holds trivially. Otherwise, for any iteration index $k\geq 1$ such that $d_k>0$, taking into account \ref{H1} and \ref{H2}, we can write
\begin{equation*}
\F^*\leq\P(\xkk,\tkk)<\P(\xk,\tk)\leq \F(\ukm,\rkm).
\end{equation*}
On the other hand, the rightmost inequality in \ref{H2} gives
\begin{equation*}
\F(\ukm,\rkm)\leq \P(\xkm,\tkm) + r_{k-1},
\end{equation*}
which, in view of \eqref{upsilon}, implies that $(\ukm,\rkm)\in \bar B$. 
Then, we can write the KL inequality related to the function $\F$ at $(\ukm,\rkm)$:
\begin{equation*}
\phi'(\F(\ukm,\rkm)-\F^*)\geq \frac{1}{\|\partial \F(\ukm,\rkm)\|_-}.
\end{equation*}
Furthermore, combining the previous inequality with \ref{H3} yields
\begin{equation*}
\phi'(\F(\ukm,\rkm)-\F^*)\geq \frac{1}{\frac{b}{b_k}\sum\limits_{i\in I}\theta_id_{k-i}+\frac{1}{b_k}\zeta_k}.
\end{equation*}
Since $\phi$ is concave, $\phi'$ is nonincreasing. Therefore, \ref{H2} implies
\begin{equation}\nonumber
\phi'(\P(\xk,\tk)-\F^*)\geq \phi'(\F(\ukm,\rkm)-\F^*).
\end{equation}
Exploiting again the concavity of $\phi$, we have
\begin{align*}
\phi(\P(\xk,\tk)-\F^*) &- \phi(\P(\xkk,\tkk) -\F^*)\geq \\
& \phi'(\P(\xk,\tk)-\F^*)(\P(\xk,\tk)-\P(\xkk,\tkk)).
\end{align*}
Combining the last three relations with \ref{H1} leads to
\begin{eqnarray*}
\phi(\P(\xk,\tk)-\F^*)-\phi(\P(\xkk,\tkk)-\F^*)&\geq & \frac{\P(\xk,\tk)-\P(\xkk,\tkk)}{\frac{b}{b_k}\sum\limits_{i\in I}\theta_id_{k-i}+\frac{1}{b_k}\zeta_k}\\
&\geq & \frac{a_kd_k^2}{\frac{b}{b_k}\sum\limits_{i\in I}\theta_id_{k-i}+\frac{1}{b_k}\zeta_k}.
\end{eqnarray*}
Recalling the definition of $\phi_k$ and $\zeta_k'$, the above inequality implies the following one
\begin{equation}\nonumber 
d_k^2\leq \phi_k\left(\sum_{i\in I}\theta_id_{k-i}+\zeta_k'\right).
\end{equation}
Taking the square root of both sides and using the inequality $2\sqrt{uv}\leq u+v$ on the right-hand-side, we obtain
\eqref{th2:eq1}.\\
Summing \eqref{th2:eq1} from 1 to $k$ leads to
\begin{equation}\label{th2:eq2}
2 \sum_{j=1}^k d_j \leq \sum_{j=1}^k\phi_j+\sum_{j=1}^k \sum_{i\in I}\theta_i d_{j-i} + \sum_{j=1}^k\zeta'_j.
\end{equation}
We now observe that 
\begin{eqnarray*}
\sum_{j=1}^k\phi_j &=& \frac b c (\phi(\P(x^{(1)},s^{(1)})-\F^*)-\phi(\P(x^{(k+1)},\tkk)-\F^*))\\
 &\leq& \frac b c \phi(\P(x^{(1)},s^{(1)})-\F^*),
\end{eqnarray*}
where the rightmost inequality follows from the positive sign of $\phi$. Furthermore, the second sum in the right-hand side of \eqref{th2:eq2} can be rewritten as below
\begin{align*}
\sum_{j=1}^k\sum_{i\in I}\theta_i d_{j-i}&=\sum_{i\in I}\sum_{j=1}^k\theta_id_{j-i} =\sum_{i\in I}\sum_{r=1-i}^{k-i}\theta_id_{r}\\
&\leq \sum_{i\in I}\sum_{r=1-i}^{0}\theta_id_{r}+\left(\sum_{i\in I}\theta_i\right)\sum_{r=1}^{k}d_{r}+\sum_{i\in I}\sum_{r=k+1}^{k-i}\theta_id_{r}\\
&=\sum_{i\in I}\sum_{r=1-i}^{0}\theta_id_{r}+\sum_{r=1}^{k}d_{r}+\sum_{i\in I}\sum_{r=k+1}^{k-i}\theta_id_{r}
\end{align*}
where we have used the change of variable $r=j-1$ and the property $\sum_{i\in I}\theta_i=1$. Note that the sums appearing in the previous relation are assumed to be zero whenever the start index of the summation is larger than the termination index. Therefore we can write
\begin{equation*}
2 \sum_{j=1}^k d_j \leq \sum_{i\in I}\sum_{r=1-i}^{0}\theta_id_{r}+\sum_{r=1}^{k}d_{r}+\sum_{i\in I}\sum_{r=k+1}^{k-i}\theta_id_{r} + \frac b c \phi(\P(x^{(1)},s^{(1)})-\F^*)+\sum_{j=1}^k\zeta'_j
\end{equation*}
which clearly implies
\begin{equation}\label{eq:crucial}
\sum_{j=1}^k d_j \leq \sum_{i\in I}\sum_{r=1-i}^{0}\theta_id_{r}+\sum_{i\in I}\sum_{r=k+1}^{k-i}\theta_id_{r} + \frac b c \phi(\P(x^{(1)},s^{(1)})-\F^*)+\sum_{j=1}^k\zeta'_j.
\end{equation}
At this point, observe that the first two sums in the right-hand side of \eqref{eq:crucial} are finite linear combinations of the terms $\{d_r\}_{r\in\N}$. Conditions \ref{H1} and \ref{H7} ensure that $d_k\rightarrow 0$, hence those sums are converging to $0$ for $k\to \infty$. Noting also that $\{\zeta'_k\}_{k\in\N}$ is summable and taking the limit of \eqref{eq:crucial} for $k\to \infty$, we obtain  
\begin{equation}\label{eq:limd}
\sum_{k=0}^{\infty} d_k < +\infty.
\end{equation}
(ii) Combining \eqref{eq:limd} with \ref{H4}, we also obtain
\begin{equation*}
\sum_{k=0}^{\infty}\|\xkk-\xk\|\leq p \sum_{k=0}^{\infty} d_{k+k'} < +\infty.
\end{equation*}
which implies that the sequence $\{\xk\}_\kinN$ converges to a point $x^*\in\R^n$.\\
(iii) Let $(x^*,\rho^*)\in\R^n\times\R^m$ be the unique limit point of the sequence $\{(\xk,\rk)\}_{k\in\N}$, namely $(x^{(k)},\rho^{(k)})\rightarrow (x^*,\rho^*)$. By using \ref{H6}, it follows that $(u^{(k)},\rho^{(k)})\rightarrow (x^*,\rho^*)$ and $\F(u^{(k)},\rho^{(k)})\rightarrow \F(x^*,\rho^*)$. Furthermore, summing \ref{H3} for $k=0,\ldots,K$ yields
$$
\sum_{k=0}^{K}b_{k+1}\|\partial \F(\uk,\rk)\|_{-}\leq b\sum_{k=0}^{K}\sum_{i\in I}\theta_i d_{k+1-i}+\sum_{k=0}^{K}\zeta_k.
$$
Taking the limit for $K\rightarrow \infty$, using \eqref{eq:limd} and recalling that $\{\zeta_k\}_{k\in\N}$ is summable, we obtain
\begin{equation*}
\sum_{k=0}^{\infty} b_{k+1}\|\partial \F(\uk,\rk)\|_{-}<+\infty.
\end{equation*}
Since \ref{H7} requires $\sum_{k}b_k=+\infty$, the previous relation implies that
$$
\liminf_{k\rightarrow \infty}\|\partial \F(\uk,\rk)\|_{-}=0.
$$
In conclusion, the sequence $\{(u^{(k)},\rho^{(k)})\}_{k\in\N}$ satisfies all the hypotheses of Lemma \ref{suff_criterion}, which means that $0\in\partial \mathcal{F}(x^*,\rho^*)$.

\end{proof}

\section{Applications of the abstract scheme}\label{sec:applications}
In this section, we show how we can devise some brand new forward--backward--type algorithms satisfying Conditions \ref{definition:abstract} and, hence, guarantee their convergence to a stationary point in virtue of Theorem \ref{thm:convergence}. In particular, from now on, we address the problem
\begin{equation}\label{composite_problem}
\min_{x\in\R^n}f(x), \ \ \ f(x) =  f_0(x) + f_1(x),
\end{equation}
where we assume that $f_0,f_1$ are as follows:
\begin{enumerate}[label=\it{[A\arabic*]}]
\item\label{assi} $f_1:\R^n\to \overline{\R}$ is a proper, lower semicontinuous, convex function;
\item\label{assii} $f_0:\R^n\to\R$ is continuously differentiable on an open set $\Omega_0\supset \overline{\dom(f_1)}$;
\item\label{assiii} $f_0$ has $L-$Lipschitz continuous gradient on $\dom(f_1)$, i.e.,
\begin{equation}\nonumber
\|\nabla f_0(x)-\nabla f_0(y)\|\leq L\|x-y\|, \ \ \forall x,y\in\dom(f_1),
\end{equation}
for some $L>0$.
\item\label{assiv} $f$ is bounded from below.
\end{enumerate}
Under the above assumptions, for any $z\in\dom(f_1)$, the following subdifferential calculus rules hold \cite[Proposition 8.12, Exercise 8.8(c)]{Rockafellar-Wets-1998}
\begin{align}
\partial f_1(z)&=\{w\in\mathbb{R}^n: f_1(y)\geq f_1(z)+\langle w,y-z\rangle, \ \forall y\in\mathbb{R}^n\}\nonumber\\
\partial f(z) &= \{\nabla f_0(z)\} + \partial f_1(z).\label{sum:subdiff}
\end{align}

\silviacorr{One of the most popular strategies for accelerating first order methods consists in including an inertial, or heavy ball, term to the iteration rule.  This idea was originally proposed in \cite{Polyak-1964} for the gradient descent method, and consists in combining the gradient direction and the direction obtained from the last two iterates. This approach has been further developed in the seminal papers \cite{Nesterov-2005,Beck-Teboulle-2009b} and successively realized in a variety of algorithms. In this paper, in the general nonsmooth nonconvex setup of \eqref{composite_problem}, we consider the therefrom inspired inertial proximal-gradient method proposed in \cite{Ochs-etal-2014,Ochs-2019}, which is defined by the iteration}
\begin{equation}\label{eq:inertial_FB_iterate}
\xkk = \prox_{\alpha_k f_1}(\xk - \alpha_k\nabla f_0(\xk) +\beta_k(\xk-\xkm)),
\end{equation}
where $\alpha_k,\beta_k$ are suitably chosen parameters. By definition of the proximity operator, the above updating rule consists in defining the new point $\xkk$ as the unique solution of the minimization problem
\begin{equation}\label{eq:mini_sub_prob}
\min_{y\in \R^n} f_1(y) -f_1(\xk) + \langle \nabla f_0(\xk)-\frac{\beta_k}{\alpha_k}(\xk-\xkm),y-\xk\rangle + \frac{1}{2\alpha_k}\|y - \xk\|^2 .
\end{equation}
We will refer to the minimizer of this problem as the \emph{inertial proximal gradient point}. If $\beta_k=0$, we recover the standard proximal gradient point, otherwise the inertial step $\xk-\xkm$ is included in the argument of the proximal gradient operator, with the aim of improving the convergence behaviour of the overall method.

However, in several practical situations the exact minimization of \eqref{eq:mini_sub_prob} is infeasible, while high precision approximations can be computed efficiently. Therefore, in the following we address the key challenge of designing algorithms that inexactly compute the inertial proximal gradient point with implementable conditions that still preserve the convergence guarantees of Theorem \ref{thm:convergence}. More precisely, we propose two new inexact inertial--type algorithms, where the second one also features a linesearch procedure along a descent direction of a suitable merit function. The convergence analysis of both algorithms can be performed in the abstract framework provided by Conditions \ref{definition:abstract}. We stress that, due to the inexactness in the computation of the inertial proximal gradient point, our algorithms cannot be cast in the abstract frameworks proposed in previous works.

\subsection{Inexactness criterion of the inertial proximal gradient point}\label{sec:inexact_inertial}

We start our presentation by defining the inexactness criterion for the inertial proximal gradient point, which generalizes the one proposed in \cite{Bonettini-Loris-Porta-Prato-2016,Bonettini-Loris-Porta-Prato-Rebegoldi-2017,Bonettini-Prato-Rebegoldi-2020}.

Given two positive parameters $\alpha_k,\beta_k$, consider the function $h\k:\R^n\times\R^n\times\R^n\to \R\cup\{+\infty\}$ defined as follows:
\begin{equation}\label{hsigma}
h\k(y;x,s) = f_1(y) - f_1(x) + \langle \nabla f_0(x) - \frac{\beta_k}{\alpha_k}(x-s),y-x\rangle + \frac{1}{2\alpha_k}\|y-x\|^2.
\end{equation}
Clearly, the inertial proximal gradient point \eqref{eq:inertial_FB_iterate} is the minimizer of the above function with respect to the first argument, with $x = \xk$, $s = \xkm$.\\
Given $(\xk,s\k)$, we denote by $\hat y\k$ the (exact) minimizer of the function in \eqref{hsigma}
\begin{equation}\label{hydef}
\hat y\k = \argmin_{y\in\R^n}h\k(y;x\k,s\k) = \prox_{\alpha_k f_1}(x\k-\alpha_k\nabla f_0(x\k) +\beta_k (x\k-s\k)).
\end{equation}
\def\hy{ \hat y}
\def\ty{ \tilde y}
\noindent The point $\hat y\k$ is the unique point satisfying the optimality condition
\begin{gather}\label{eq:optimal}
 0\in\partial h\k(\hy\k;\xk,s\k)  \nonumber\\*
 \Updownarrow  \\
 -\frac 1 \alpha_k (\hy\k - x\k+\alpha_k \nabla f_0(x\k)-\beta_k(x\k-s\k)) \in \partial f_1(\hy\k). \nonumber
\end{gather}
Borrowing the ideas in \cite{Bonettini-Loris-Porta-Prato-2016, Bonettini-Prato-Rebegoldi-2021}, we define an approximation of $\hy\k$ as any point $\ty\k\in \dom(f_1)$ such that
\begin{equation}\label{inexact_crit1}
h\k(\ty\k;x\k,s\k)-h\k(\hy\k;x\k,s\k)\leq -\frac \tau 2 h\k(\ty\k;x\k,s\k),
\end{equation} 
\silviacorr{for a given constant $\tau \geq 0$ not depending on $k$. Since $\hyk$ is the unique minimizer of $h\k(\,\cdot\, ;x\k,s\k)$, for $\tau=0$ the previous inequality implies $\ty\k = \hy\k$, therefore the inexactness occurs for $\tau > 0$.}
The above condition is equivalent to the following one:
\begin{equation}\label{inexact_crit2}
h\k(\ty\k;x\k,s\k)\leq\left(\frac{2}{2+\tau}\right)h\k(\hy\k;x\k,s\k)\leq 0,
\end{equation} 
where the rightmost inequality is a consequence of the fact that $\hy\k$ is a minimizer of $h\k (\cdot;x\k,s\k)$ and $h\k (x\k;x\k,s\k)=0$. Therefore, we have $h\k(\ty\k;x\k,s\k)\leq 0$ and condition \eqref{inexact_crit1} can be rewritten in equivalent way as
\begin{equation}\label{inexact_crit3}
0\in\partial_{\epsilon_k}h\k(\ty\k;x\k,s\k), \ \ \mbox{with } \epsilon_k = -\frac\tau 2h(\ty\k;x\k,s\k),
\end{equation}
where $\partial_{\epsilon}h(\ty;x,s)=\{w\in\mathbb{R}^n: h(y;x,s)\geq h(\ty;x,s)+\langle w,y-\ty\rangle-\epsilon, \ \forall y\in\mathbb{R}^n\}$ is the $\epsilon$-subdifferential in the first argument of the convex function $h(\cdot;x,s)$ at point $\ty$ \cite[p. 82]{Zalinescu-2002}. Therefore, the point $\ty\k$ is defined upon a relaxation of the optimality condition \eqref{eq:optimal}, where the subdifferential of $h\k(\cdot;x\k,s\k)$ is replaced by the $\epsilon_k$-subdifferential and the accuracy parameter $\epsilon_k$ is chosen in a specific way, which is crucial for preserving the theoretical convergence properties.

Even if the inclusion \eqref{inexact_crit3} is implicit, a point $\tilde{y}^{(k)}\in \dom(f_1)$ satisfying \eqref{inexact_crit3} can be actually computed in practice { in some special cases} with a well defined, explicit primal--dual procedure, as explained in the following section. 

\silviacorr{\subsubsection{Computation of the inexact inertial proximal gradient point}\label{sec:procedure}
In this section we describe a procedure for computing a point $\ty\k$ satisfying criterion \eqref{inexact_crit3}. Our discussion is mainly based on the approach presented in \cite{Bonettini-Rebegoldi-Ruggiero-2018}, which is a development of \cite{Bonettini-Loris-Porta-Prato-Rebegoldi-2017,Bonettini-Loris-Porta-Prato-2016,Salzo-Villa-2012,Villa-etal-2013}. 
}

\silviacorr{
We consider the case when the convex term in \eqref{composite_problem} has the structure 
\begin{equation}\label{f1form}
f_1(x)=\sum_{i=1}^{p}{ g_i}(M_ix)+\xi(x),
\end{equation}
where, for all $i\in\{1,\ldots,p\}$, $M_i\in\R^{m_i\times n}$ and ${ g_i}:\R^{m_i}\rightarrow \bR$, $\xi:\R^n\rightarrow \bR$ are proper convex functions that are continuous on their domain. Moreover, we assume that $\dom({ g_i}\circ M_i)\supseteq \dom(\xi)$, i.e., $\dom(f_1)=\dom(\xi)$, and $ g_i$, $\xi$ admit a closed form formula for computing their associated proximity operators.} 


\silviacorr{
With the aim of describing a procedure for computing a point satisfying \eqref{inexact_crit3}, we rewrite the convex minimization problem in \eqref{eq:mini_sub_prob}, omitting the iteration index for simplicity, as
$$
\min_{z\in\R^n} \ h(z;x,s)\equiv  \sum_{i=1}^{p}{ g_i}(M_iz)+\xi(z)+\frac{1}{2\alpha}\|z-\bar{x}\|^2+c,
$$ 
where $\bar{x}= x-\alpha \nabla f_0(x)+\beta(x-s)$ and $c=-\frac{\alpha}{2}\|\nabla f_0(x)-\beta/\alpha(x-s)\|^2-f_1(x)$ depend on $x$ and $s$ but not on the optimization variable $z$. Recalling the relation ${ g_i}(M_iz)=\max_{w_i\in\R^{m_i}}\langle w_i,M_iz\rangle-{ g_i}^*(w_i)$, where ${ g_i}^*:\R^{m_i}\rightarrow \bR$ is the convex conjugate of $ g_i$, and plugging it in the previous equation, results in the following primal--dual formulation 
$$
\min_{z\in\R^n}\max_{w\in\R^{m}} \langle w,Mz\rangle-\sum_{i=1}^{p}{ g_i}^*(w_i)+\xi(z)+\frac{1}{2\alpha}\|z-\bar{x}\|^2+c,
$$
where $M=(M_1^T \ M_2^T \ \cdots M_p^T)^T\in\R^{m\times n}$, $m=m_1+\ldots+m_p$, and $w = (w_1^T, w_2^T, \cdots, w_p^T)^T\in\R^m$. Some further simple manipulations lead to the following equivalent reformulation
\begin{align*}
\max_{w\in\R^{m}}\min_{z\in\R^n} \ \xi(z)&+\frac{1}{2\alpha}\|z-(\bar{x}-\alpha M^Tw)\|^2-\frac{1}{2\alpha}\|\bar{x}-\alpha M^Tw\|^2+\\
&+\frac{1}{2\alpha}\|\bar{x}\|^2-\sum_{i=1}^{p}{ g_i}^*(w_i)+c.
\end{align*}
Minimizing with respect to $z$ and using the definition of proximity operator yields the dual problem
\begin{equation}\label{eq:dual}
\max_{w\in\R^{m}} \psi(w;x,s)\equiv G(w;x,s)-\sum_{i=1}^{p}{ g_i}^*(w_i),
\end{equation}
where 
\begin{align*}
G(w;x,s)&=\xi(\prox_{\alpha \xi}(\bar{x}-\alpha M^Tw))\\
&+\frac{1}{2\alpha}\|\prox_{\alpha \xi}(\bar{x}-\alpha M^Tw)-(\bar{x}-\alpha M^Tw)\|^2\\
&-\frac 1{2\alpha} \|\bar x -\alpha M^Tw\|^2+\frac{1}{2\alpha}\|\bar{x}\|^2+c.
\end{align*}
The function $G(\,\cdot\,;x,s)$ is concave, continuously differentiable with $\nabla G(w;x,s)=M \prox_{\alpha \xi}(\bar{x}-\alpha M^Tw)$ \cite[Proposition 12.30]{Bauschke-Combettes-2011}, and its domain is the whole dual space $\R^m$. Note also that, by definition,
\begin{equation*}
h(z;x,s)\geq \psi(w;x,s) \ \ \forall z\in\R^n,w\in\R^m
\end{equation*} 
and the equality holds if and only if $z=\hy$ and $w = \hat w$, where $\hat w$ denotes a solution of the dual problem \eqref{eq:dual}. In particular, the following relations hold:
\begin{eqnarray*}
\hy&=& \prox_{\alpha \xi}(\bar{x}-\alpha M^T\hat w)\\
 \psi(\hat w;x,s)&= &h(\hy;x,s)\\
\psi(w;x,s)&\leq&h(\hy;x,s)\ \ \forall w\in\R^m.
\end{eqnarray*}
Then, if a primal-dual pair $(\ty,\tilde{w})\in\R^n\times\R^m$ satisfies the inequality
\begin{equation}\label{eq:ty_practical}
h(\ty;x,s)\leq \frac{2}{2+\tau} \psi(\tilde{w};x,s),
\end{equation}
the point $\ty$ complies with \eqref{inexact_crit3}. A pair $(\ty,\tilde{w})$ satisfying \eqref{eq:ty_practical} can be computed by proceeding as follows: 
\begin{enumerate}
    \item apply an iterative method to the dual problem \eqref{eq:dual} generating a sequence $\{w^{(\ell)}\}_{\ell\in\N}\subset \dom(\psi)$ such that $\{w^{(\ell)}\}_{\ell\in\N}$ converges to $\hat{w}$ and $\{\psi(w^{(\ell)};x,s)\}_{\ell\in\N}$ converges to $\psi(\hat w;x,s)$; 
    \item define the corresponding primal sequence $\tilde{y}^{(\ell)}=\prox_{\alpha \xi}(\bar{x}-\alpha M^Tw^{(\ell)})$; 
    \item stop the dual iterations when $h(\ty^{(\ell)};x,s)\leq \eta \psi(\tilde{w}^{(\ell)};x,s)$, where $\eta = 2/(2+\tau)$; 
    \item set $\ty = \ty^{(\ell)}$.
\end{enumerate}}

\silviacorr{
Note that the described procedure is well defined: indeed, by continuity, $\ty^{(\ell)}$ converges to $\hy$, and since ${ g_i},\xi$ are assumed to be continuous, $h(\ty^{(\ell)};x,s)$ converges to $h(\hy;x,s)$. If $\dom({ g_i}\circ M_i)\supseteq \dom(\xi)$, as we assumed at the beginning of this section, then all the points $\ty^{(\ell)}$ generated by this procedure (including its output $\ty$) belong to the domain of $f_1$, which coincides with the domain of $h(\cdot;x,s)$. Feasibility is required during the dual iterations for checking the stopping criterion, as well as for the inexactness criterion \eqref{inexact_crit2} to be fulfilled.}

\silviacorr{We remark that the objective function $-\psi(w;x,z)$ of the dual problem \eqref{eq:dual} is the sum of the smooth convex term $-G(w;x,z)$ and the convex function $\sum_{i=1}^p{ g_i}^*(w_i)$, whose proximity operator is easy to compute, as it can be computed by separately evaluating the proximity operators associated to ${ g_i}^*$ ($i=1,...,p$), for which Moreau's identity and, consequently, the proximity operator of $ g_i$, can be exploited. This means that, for example, any (exact) forward--backward method can be applied to the dual problem \eqref{eq:dual} for generating the sequence  $\{w^{(\ell)}\}_{\ell\in\N}$.
}

\subsection{\iPiano{}: inertial inexact proximal algorithm for nonconvex optimization}\label{sec:iPiano}

In this section we propose a generalization of the inertial method iPiano, first proposed in \cite{Ochs-etal-2014} and further developed in \cite{Ochs-2019}, introducing the possibility of an inexact computation of the inertial proximal gradient point.

Our proposed method is reported in Algorithm \ref{algo:iPiano_inexact_Silvia} and denoted as i$^2$Piano (inertial inexact Proximal algorithm for nonconvex optimization). Let us describe the i$^2$Piano iteration in detail. {\sc STEP 1--4} determine the stepsize $\alpha_k$ and inertial parameter $\beta_k$ at iteration $k$. Given the parameters $\alpha_k,\beta_k$, {\sc STEP 5} seeks to find a possibly inexact inertial proximal point, i.e., an inexact minimizer of the function \eqref{hsigma}. 
According to \eqref{inexact_crit3}, the i$^2$Piano iterate is any point $\xkk=\tilde{y}^{(k)}$ such that
\begin{equation*}\label{defxkkiPiano}
0\in \partial_{\epsilon_k}\hk(\xkk;\xk,\xkm),\ \ \ \mbox{ with }\epsilon_k = -\frac\tau 2 \hk(\xkk;\xk,\xkm)
\end{equation*}
for some fixed constant $\tau\geq 0$. When $\tau = 0$, we recover the exact inertial proximal gradient point provided by the iPiano method \cite{Ochs-2019,Ochs-etal-2014}. If $\tau>0$, as explained in Section \ref{sec:procedure}, \textsc{Step 5} of \iPiano{} can be practically implemented with an inner loop consisting of an iterative optimization method applied to the dual of problem $\min_{y\in\R^n} h\k(y;\xk,\xkm)$, until the stopping condition \eqref{eq:ty_practical} is met. In the implementation of \iPiano{}, besides the stepsize $\alpha_k$ and the inertial parameter $\beta_k$, a further parameter, $L_k$, is introduced (cf. {\sc STEP 6}) with the aim of estimating a local Lipschitz constant of $\nabla f_0$ that allows us to take larger steps. In particular, $L_k$ is successively increased by a factor $\eta>1$ until the following descent condition holds
\begin{equation}\label{eq:descent_inexact}
f_0(\xkk)\leq f_0(x^{(k)})+\langle \nabla f_0(x^{(k)}),\xkk-x^{(k)}\rangle+\frac{L_k}{2}\|\xkk-x^{(k)}\|^2.
\end{equation}
\silviacorr{Clearly, if the Lipschitz constant $L$ is known, the previous inequality is satisfied with $L_k\geq L$ for all $k$, and the loop between {\sc STEP 2} and {\sc STEP 6} can be skipped. In this case, also the choice of the inertial parameter $\beta_k$ and the corresponding steplength $\alpha_k$ could be simplified. For example, $\{\beta_k\}_\kinN$ can be set as a constant sequence $\beta_k\equiv \beta < 1$, or as a prefixed sequence such that $\beta_k < 1$ and $\lim_k\beta_k = 1$, whereas $\alpha_k$ can be selected as in {\sc Step 4} with $L_k=L$. 
}

\silviacorr{
The choice of the parameters in Algorithm \iPiano{} is very similar to the one proposed in \cite{Ochs-2019}, and aims at adaptively estimating the Lipschitz constant, which is quite common in forward--backward methods \cite{Bonettini-Loris-Porta-Prato-Rebegoldi-2017,Calatroni-Chambolle-2019,Ochs-etal-2014,Ochs-2019,Scheinberg-2014}. We stress that the main novelty in Algorithm \iPiano{} is not on the parameters choice, but in the possibility of inexactly computing the proximity operator according to an implementable inexactness criterion, which, as far as we know, is new in the framework of inertial methods for nonconvex optimization.}

\begin{algorithm}[h!]
\caption{\iPiano{}: inertial inexact proximal algorithm for nonconvex optimization.}\label{algo:iPiano_inexact_Silvia}
Choose $x^{(-1)}, x^{(0)}\in\dom(f_1)$, $\delta\geq \gamma>0$,  $\eta>1$, $0<L_{min}\leq L_{max}$, $\tau\geq 0$. Set $\theta = 2/(\sqrt{2+\tau}+\sqrt{\tau})^2$ and choose $\omega\in[0,1)$ if $\tau > 0$, $\omega\in[0,1]$ if $\tau = 0$.\\
{\textsc FOR $k=0,1,\ldots$}
\begin{itemize}
\item[] 
\begin{description}
\item[\mdseries\textsc{   Step 1.}] Choose $L_k\in[L_{min},L_{max}]$.
\item[\mdseries\textsc{   Step 2.}] Set $\displaystyle b_k = \frac{L_k+2\delta}{L_k+2\gamma}$.
\item[\mdseries\textsc{   Step 3.}] Set $\beta_k=\displaystyle\frac{1+\theta\omega}{2}\cdot\frac{b_k-1}{b_k-\frac 1 2}$.
\item[\mdseries\textsc{   Step 4.}] Set $\alpha_k=\displaystyle \frac{1+\theta\omega-2\beta_k}{L_k+2\gamma}$.
\item[\mdseries\textsc{   Step 5.}] Compute $\tyk$ such that
\begin{equation}\nonumber
0\in \partial_{\epsilon_k}\hk(\tyk;\xk,\xkm),\ \ \ \mbox{ with }\epsilon_k = -\frac\tau 2 \hk(\tyk;\xk,\xkm).
\end{equation}
\item[\mdseries\textsc{   Step 6.}] Check the local descent:
\begin{description}
\item[\mdseries\textsc{If}] 
$\displaystyle
f_0(\tyk)\leq f_0(x^{(k)})+\langle \nabla f_0(x^{(k)}),\tyk-x^{(k)}\rangle+\frac{L_k}{2}\|\tyk-x^{(k)}\|^2
$
\begin{description}
\item[-] Set $\xkk = \tyk$.
\end{description}
\item[\mdseries\textsc{Else}] 
\begin{description}
\item[]
\item[-]Set $L_k = \eta L_k$.
\item[-]Go to \textsc{Step 2.}
\end{description} 
\end{description}
\end{description}
\end{itemize}
{\textsc END}
\end{algorithm}

\silviacorr{Under some standard boundedness assumptions and within the Kurdyka--Lojasiewicz framework employed within the abstract framework of Section \ref{sec:abstract}, we can prove that the sequence generated by \iPiano{} converges to a stationary point, as stated below. 
\begin{theorem}\label{thm:i2piano}
Let ${\mathcal F}:\R^n\times \R \to \bar{\R}$ be the { merit} function defined as
\begin{equation}\label{eq:surro}
{\mathcal F}(u,\rho) = f(u) +\frac 1 2 \rho^2, \quad \forall u\in\mathbb{R}^n, \ \rho\in\mathbb{R}.
\end{equation}
Suppose that $\mathcal{F}$ is a KL function and assume that the sequence $\{x^{(k)}\}_{k\in\mathbb{N}}$ generated by \iPiano{} is bounded. Then, $\{x^{(k)}\}_{k\in\mathbb{N}}$ converges to a stationary point of $f$.
\end{theorem}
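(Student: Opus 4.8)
The plan is to show that \iPiano{} is an instance of the abstract scheme of Condition~\ref{definition:abstract}, with a suitable identification of the auxiliary objects, and then to invoke Theorem~\ref{thm:convergence}. Concretely, I would take the Lyapunov/merit pair
\[
\Phi(x,s)=f(x)+\delta\|x-s\|^2,\qquad s^{(k)}=x^{(k-1)},
\]
choose $u^{(k)}=\hat y^{(k)}$ to be the \emph{exact} inertial proximal--gradient point of \eqref{hydef} (which is never computed in practice), let $\mathcal F$ be the merit function \eqref{eq:surro}, let $\rho^{(k)}\ge 0$ be an explicit error quantity of the order of $\sqrt{\|x^{(k+1)}-x^{(k)}\|^2+\|x^{(k)}-x^{(k-1)}\|^2-h^{(k)}(\tilde y^{(k)};x^{(k)},x^{(k-1)})}$, and let $d_k$ be a nonnegative quantity built from $-h^{(k)}(\tilde y^{(k)};x^{(k)},x^{(k-1)})$ and $\|x^{(k)}-x^{(k-1)}\|$ that dominates $\|x^{(k+1)}-x^{(k)}\|$. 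A preliminary observation is that the backtracking in \textsc{Step~6} always terminates: by \ref{assiii} $\nabla f_0$ is $L$-Lipschitz on $\dom(f_1)$, and $x^{(k)},\tilde y^{(k)}\in\dom(f_1)$, so the Descent Lemma forces \eqref{eq:descent_inexact} as soon as $L_k\ge L$; hence $L_k\in[L_{min},\bar L]$ with $\bar L:=\max\{L_{max},\eta L\}$, and consequently $\alpha_k,\beta_k,b_k$ stay in fixed compact intervals bounded away from their degenerate values (in particular $\alpha_k\ge\underline\alpha>0$, and $b_k\ge 1$ because $\delta\ge\gamma$).

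The core estimate is an \emph{inexact descent lemma}: writing $\Delta_{k+1}=\|x^{(k+1)}-x^{(k)}\|$ and combining \eqref{eq:descent_inexact} with the definition \eqref{hsigma} of $h^{(k)}$, after splitting the inertial cross-term by Young's inequality one gets
\[
f(x^{(k+1)})\le f(x^{(k)})+\Big(\tfrac{L_k}{2}-\tfrac{1-\beta_k}{2\alpha_k}\Big)\Delta_{k+1}^2+\tfrac{\beta_k}{2\alpha_k}\Delta_k^2+h^{(k)}(\tilde y^{(k)};x^{(k)},x^{(k-1)}).
\]
The parameter rules of \textsc{Steps~1--4} are tailored so that $\tfrac{\beta_k}{2\alpha_k}=\delta-\gamma\le\delta$ and $\delta+\tfrac{L_k}{2}-\tfrac{1-\beta_k}{2\alpha_k}=\omega\,\tfrac{\theta}{2\alpha_k}$; together with the sharp inexactness inequality
$-h^{(k)}(\tilde y^{(k)};x^{(k)},x^{(k-1)})\ge\tfrac{\theta}{2\alpha_k}\Delta_{k+1}^2$, which follows from the $\tfrac1{\alpha_k}$-strong convexity of $h^{(k)}$ and \eqref{inexact_crit2} and is exactly where the exponent $\theta=2/(\sqrt{2+\tau}+\sqrt\tau)^2$ enters, this yields $\Phi(x^{(k+1)},x^{(k)})+a_kd_k^2\le\Phi(x^{(k)},x^{(k-1)})$ with $a_k$ bounded below by a positive constant, i.e.\ \ref{H1}. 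Condition~\ref{H2} comes from the same chain applied to both $\tilde y^{(k)}$ and $\hat y^{(k)}$: using the Descent Lemma bound for $f_0(\hat y^{(k)})$, the optimality $h^{(k)}(\hat y^{(k)};\cdot)\le 0$, and $\|\tilde y^{(k)}-\hat y^{(k)}\|^2\le\tfrac{2\alpha_k\tau}{2+\tau}\big(-h^{(k)}(\hat y^{(k)};\cdot)\big)$, one checks that $\mathcal F(\hat y^{(k)},\rho^{(k)})=f(\hat y^{(k)})+\tfrac12(\rho^{(k)})^2$ lies between $\Phi(x^{(k+1)},x^{(k)})$ and $\Phi(x^{(k)},x^{(k-1)})+r_k$ with $r_k\to 0$ (in fact summable, since $\Delta_k,\rho^{(k)}$ and the relevant residuals are dominated by $d_k$ and $\sum_k a_k d_k^2<+\infty$). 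Finally \ref{H7} is immediate: $b_k\ge 1$ gives $\sum_k b_k=+\infty$, while $a_k,b_k$ range over fixed compact positive intervals.

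For \ref{H3}, rearranging the optimality condition \eqref{eq:optimal} at $\hat y^{(k)}$ produces
$\nabla f_0(\hat y^{(k)})-\nabla f_0(x^{(k)})-\tfrac1{\alpha_k}(\hat y^{(k)}-x^{(k)})+\tfrac{\beta_k}{\alpha_k}(x^{(k)}-x^{(k-1)})\in\partial f(\hat y^{(k)})$, whose norm is bounded by $C\big(\|\hat y^{(k)}-x^{(k)}\|+\|x^{(k)}-x^{(k-1)}\|\big)$ via \ref{assiii} and the compactness of the parameter ranges; since $\|\hat y^{(k)}-x^{(k)}\|\le\|\tilde y^{(k)}-x^{(k)}\|+\|\tilde y^{(k)}-\hat y^{(k)}\|\lesssim d_k$ and $\|x^{(k)}-x^{(k-1)}\|\lesssim d_{k-1}$, while the $\rho$-component of $\partial\mathcal F(\hat y^{(k)},\rho^{(k)})$ equals $\rho^{(k)}$, again of this order, we obtain \ref{H3} with $b_{k+1}\equiv 1$, a finite index set $I\subseteq\{1,2\}$, and $\zeta_k\equiv 0$. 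Condition~\ref{H6} is the closedness of the inexact proximal map: along any $x^{(k_j)}\to x^*$ one has $\|\hat y^{(k_j)}-x^{(k_j)}\|\to 0$ (because $d_k\to 0$ by \ref{H1},\ref{H7}), hence $\hat y^{(k_j)}\to x^*$; then $f_0(\hat y^{(k_j)})\to f_0(x^*)$ by continuity, while $f_1(\hat y^{(k_j)})\to f_1(x^*)$ follows from $\liminf f_1(\hat y^{(k_j)})\ge f_1(x^*)$ (lower semicontinuity, \ref{assi}) and $\limsup f_1(\hat y^{(k_j)})\le f_1(x^*)$ (obtained by comparing $h^{(k_j)}(\hat y^{(k_j)};\cdot)\le h^{(k_j)}(x^*;\cdot)$ and passing to the limit), so $\mathcal F(\hat y^{(k_j)},\rho^{(k_j)})\to\mathcal F(x^*,\rho^*)$. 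Condition~\ref{H4} holds with $k'\in\{0,1\}$ because $\Delta_{k+1}^2\le\tfrac{2\alpha_k}{\theta}\big(-h^{(k)}(\tilde y^{(k)};\cdot)\big)\lesssim d_k^2$. All hypotheses of Theorem~\ref{thm:convergence} are thus met ($\{(x^{(k)},\rho^{(k)})\}$ is bounded since $\{x^{(k)}\}$ is and $\rho^{(k)}\to 0$, and $\mathcal F$ is KL by assumption), so $\{x^{(k)}\}$ has finite length and converges to some $x^*$; since $\rho^{(k)}\to 0$, part (iii) gives $0\in\partial\mathcal F(x^*,0)=\partial f(x^*)\times\{0\}$, i.e.\ $0\in\partial f(x^*)$, which is the claim.

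The main obstacle is \ref{H1}, together with the mutually compatible choice of $\Phi$, $d_k$, $\rho^{(k)}$ and the sequences $a_k,b_k$: one must carry out the Lyapunov bookkeeping for the \emph{inexact} inertial step --- simultaneously controlling the forward (gradient) error through the local Descent Lemma, the inertial cross-term, and the proximal inexactness residual --- and then verify that the specific formulas of \textsc{Steps~1--4} for $b_k,\beta_k,\alpha_k$ (in terms of $L_k,\gamma,\delta,\theta,\omega$) make the leftover term in \ref{H1} genuinely nonnegative with $a_k$ bounded away from $0$, while keeping $b_k$ divergent-but-compatible with \ref{H7}. The sharp inequality $-h^{(k)}(\tilde y^{(k)};\cdot)\ge\tfrac{\theta}{2\alpha_k}\|\tilde y^{(k)}-x^{(k)}\|^2$ and the reason $\theta=2/(\sqrt{2+\tau}+\sqrt\tau)^2$ is precisely the right constant are the quantitative heart of the argument; once these are in place, \ref{H3}, \ref{H6} and \ref{H4} are routine adaptations of standard forward--backward arguments.
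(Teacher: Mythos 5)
Your proposal is correct and follows essentially the same route as the paper's Appendix: the same identifications $\Phi(x,s)=f(x)+\delta\|x-s\|^2$, $s^{(k)}=x^{(k-1)}$, $u^{(k)}=\hat y^{(k)}$, $\mathcal F(u,\rho)=f(u)+\tfrac12\rho^2$, the same parameter identities $\tfrac{\beta_k}{2\alpha_k}=\delta-\gamma$ and $\delta+\tfrac{L_k}{2}-\tfrac{1-\beta_k}{2\alpha_k}=\tfrac{\theta\omega}{2\alpha_k}$, and the same key estimates (local Descent Lemma, strong-convexity/inexactness bound with $\theta$, subgradient bound at $\hat y^{(k)}$) to verify \ref{H1}--\ref{H7} before invoking Theorem \ref{thm:convergence}. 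Only cosmetic details differ (your closedness argument for \ref{H6} versus the paper's Proposition \ref{prop:H4}, and the index bookkeeping in the $\omega=1$, $\tau=0$ case where the paper uses $I=\{0,1\}$), none of which changes the substance.
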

In order not to excessively slowing down the reading of this section, we postpone the proof of Theorem \ref{thm:i2piano} to Section \ref{sec:conviPiano}. 
}

\begin{remark}\label{remark:KL}
We underline that $\mathcal{F}$ is a KL function if, for instance, $f$ and $\frac{1}{2}\|\cdot\|^2$ are definable in the same $o-$minimal structure \cite[Definition 7]{Bolte-etal-2007b}. Indeed functions definable in an $o-$minimal structure satisfy the KL property on their domain \cite[Theorem 11]{Bolte-etal-2007b} and $o-$minimal structures are closed with respect to the sum, see \cite[Remark 5]{Bolte-etal-2007b} and references therein. Examples of functions definable in an $o-$minimal structure are semialgebraic, \silviacorr{real analytic functions and subanalytic functions which are continuous in their closed domain.}
\end{remark}

\begin{remark}\label{remark:boundedness}
Theorem \ref{thm:i2piano} requires the boundedness of the iterates as hypothesis. A standard way to assert such a condition is when the Lyapunov function $\Phi$ defined in \eqref{eq:Phi} is coercive, since this assumption combined with the descent property \eqref{H1iPiano_inexact} guarantees that the sequence $\{(x^{(k)},x^{(k-1)})\}_{k\in\mathbb{N}}$ is included in a (bounded) level set of the coercive function $\Phi$. 
\end{remark}

\subsection{\iPianoLA{}: inertial proximal inexact line--search algorithm}\label{sec:vmilaxs}

In the following we introduce a novel algorithm combining a line--search along the descent direction and an inertial proximal-gradient step as a special case of our abstract scheme. 

A line--search procedure for the objective function $f$ requires a descent direction $d\in\mathbb{R}^n$, i.e., a vector such that the directional derivative $f'(x;d)=\lim_{\lambda\downarrow 0}(f(x+\lambda)-f(x))/\lambda$ is negative. If $\beta_k=0$ and $\ty\k$ satisfies \eqref{inexact_crit1}, then the vector $\ty\k-\xk$ is a descent direction for $f$ at $x\k$, as explained in \cite{Bonettini-Loris-Porta-Prato-2016,Bonettini-Loris-Porta-Prato-Rebegoldi-2017}
\silviacorr{In fact, if $\beta_k=0$, the inequality $h\k(\ty\k;\xk,s\k)\leq 0$ would be enough to guarantee the descent property of the vector $\ty\k-\xk$, even though it would not be sufficient to guarantee the convergence of the iterates, as shown in \cite{Bonettini-Loris-Porta-Prato-2016}.} Unfortunately, none of the two previous conditions can guarantee that $\tyk-\xk$ is a descent direction for $f$ at $x^{(k)}$ when $\beta_k > 0$, i.e., when inertia is incorporated in the iterative scheme.

In this section we show that in the general case $\beta_k	\geq 0$, the point $\ty\k$ can still be used to define a descent direction for a suitable merit function. Then, we propose a line--search procedure along this direction that enable us to define a descent algorithm such that the merit function monotonically decreases along the iterates. Differently from the backtracking procedure in \iPiano{}, the proposed line--search requires to solve the minimization subproblem \eqref{eq:mini_sub_prob} only once per iteration. Finally, we show that the new algorithm can be analyzed in the framework of Section \ref{sec:abstract}, in order to prove the convergence of the iterates to a stationary point of $f$. In this case, unlike \iPiano{}, one of the two merit functions involved in the abstract scheme will play an active role in the algorithm, as it will be explicitly computed at each iteration to determine the new point.

We define the merit function $\Phi$ appearing in \ref{H1}--\ref{H2} as follows:
\begin{equation}\label{Phidef}
\Phi:\R^n\times \R^n \to \R\cup \{+\infty\}, \ \ \  \Phi(x,s) = f(x) + \frac 1 2 \|x-s\|^2,
\end{equation}
where the variable $s$ will be considered as an actual optimization variable, independent on $x$. The function $\Phi$ can be decomposed as $\Phi(x,s) = \Phi_0(x,s) + \Phi_1(x,s)$, where
\begin{equation}\nonumber
 \Phi_0(x,s) = f_0(x) + \frac 1 2 \|x-s\|^2,\ \ \Phi_1(x,s) = f_1(x).
\end{equation}
The function $\Phi_0$ is differentiable with gradient
\begin{equation}\nonumber
\nabla \Phi_0(x,s) = \begin{pmatrix} \nabla_x \Phi_0(x,s) \\ \nabla_s \Phi_0(x,s) \end{pmatrix} = \begin{pmatrix} \nabla f_0(x) + x-s\\ s-x\end{pmatrix} .
\end{equation}
It is easy to see that $\nabla \Phi_0(x,s)$ is Lipschitz continuous and in particular it holds that
\begin{equation}\label{nablaH0lip}
\|\nabla \Phi_0(x,s) - \nabla \Phi_0(\bar x,\bar s)\| \leq { L_{\Phi_0}} \left\| \begin{pmatrix} x-\bar x\\ s-\bar s\end{pmatrix}\right\|, \ \ \forall x,\bar x, s,\bar s,
\end{equation}
with ${ L_{\Phi_0}}=L+2$, where $L$ is the Lipschitz constant of $\nabla f_0$.\\
Given a vector $d\in\R^{n}\times \R^n$
\begin{equation}
d = \begin{pmatrix} d_x\\d_s\end{pmatrix},
\end{equation}
the directional derivative of $\Phi$ at the point $(x,s)$ with respect to the direction $d$ can be written as
\begin{eqnarray*}
\Phi'(x,s;d_x,d_s) &=& \Phi_0'(x,s;d_x,d_s)+\Phi_1'(x,s;d_x,d_s) = f_1'(x;d_x) + \langle \nabla \Phi_0(x,s),d\rangle 	\\
&= & f_1'(x;d_x) + \langle \nabla f_0(x) + x-s, d_x\rangle + \langle s-x,d_s\rangle ,
\end{eqnarray*}
which always exists thanks to the convexity of $f_1$.\\
A vector $d\in\R^{2n}$ is called a descent direction for $\Phi$ at $(x,s)$ when
\begin{equation}\nonumber
\Phi'(x,s;d_x,d_s)< 0.
\end{equation}
Assume now that the vector $d_x$ has the form $d_x = y-x$, where $y$ is a point belonging to the domain of $f_1$; then, from \cite[Theorem 23.1]{Rockafellar-1970} we have
\begin{equation}\label{xs1}
\Phi'(x,s;y-x,d_s) \leq f_1(y)-f_1(x) + \langle \nabla f_0(x) + x-s, d_x\rangle + \langle s-x,d_s\rangle .
\end{equation}
The above inequality holds independently on the form of $d_s$.

\subsubsection{Defining a descent direction for the merit function}\label{sec:descent}
\def\sk{ s^{(k)}}
\def\skk{ s^{(k+1)}}
\def\wk{ w^{(k)}}
\def\yk{y^{(k)}}
\def\tyk{\tilde y^{(k)}}

Assume that $(\xk,\sk)$ is a given point in $\dom(f_1)\times\R^n$, while $\alpha_k$, $\beta_k$ are two given parameters. Let the function $\hk(y;x,s)$ be defined as in \eqref{hsigma}. 
Given a fixed tolerance parameter $\tau \geq 0$, we denote by $\tyk$ any point in $\dom(f_1)$ satisfying \eqref{inexact_crit3}.
%
Given the parameter $\gamma_k\geq 0$, consider
\begin{equation}\label{dk_xs}
\dk = \begin{pmatrix}  d_x\k \\ d_s\k \end{pmatrix},
\end{equation}
where
\begin{eqnarray}
d_x\k &=&\tyk-\xk\label{dx_xs}\\
d_s\k &=& \left(1+\frac{\beta_k}{\alpha_k}\right)(\tyk-\xk) + \gamma_k(\xk-\sk)\label{ds_xs}.
\end{eqnarray}
It can be shown (see Lemma \ref{lemma:1_xs} in Section \ref{subsec:convergence_ipianola}) that $\dk$ is a descent direction for $\Phi$ at $(\xk,\sk)$ and, therefore, we can introduce a backtracking procedure along it to seek for a sufficient decrease of the merit function.
In particular, given a point $(\xk,\sk)$ and the direction $d\k$ \eqref{dx_xs}--\eqref{ds_xs}, our proposed line--search algorithm computes a positive parameter $\lamk^+$ satisfying the following generalized Armijo inequality.
\begin{equation}\label{ine_arm}
\Phi(\xk+\lambda^+_kd_x\k,\sk+\lambda^+_kd_s\k)\leq \Phi(\xk,\sk) + \sigma\lambda^+_k\Delta_k, \quad \sigma\in(0,1),
\end{equation}
where 
\begin{equation*}
    \Delta_k = h\k(\tyk;\xk,\sk)-\gamma_k\|\xk-\sk\|^2.
\end{equation*}
The implementation of this rule via a backtracking procedure is given in Algorithm \ref{algo:LS}.
\begin{algorithm}[t!]
\caption{Armijo line--search}\label{algo:LS}
{\textsc INPUT}: $(\xk,\sk)\in\R^{n}\times\R^n$, $\dk$, $\Delta_k$ as in \eqref{defDeltak}, $\sigma,\delta\in (0,1)$
\begin{itemize}
\item[] Set $\lambda^+ = 1$.
\item[] {\textsc WHILE $\Phi(\xk+\lambda^+d_x\k,\sk+\lambda^+d_s\k)> \Phi(\xk,\sk) + \sigma\lambda^+\Delta_k$}  
\begin{itemize}
\item[] Set $\lambda^+ =\delta\lambda^+$
\end{itemize}
\item[] Set $\lamk^+ =\lambda^+$
\end{itemize}
{\textsc END}\\
{\textsc OUTPUT:} $\lamk^+$.
\end{algorithm}
In Section \ref{subsec:convergence_ipianola} we show that Algorithm \ref{algo:LS} is well posed, i.e., it terminates in a finite number of steps. 

\begin{algorithm}[t!]
\caption{\iPianoLA{}: inertial proximal inexact line--search algorithm}\label{algo:VMILA_xs}
{\textsc INPUT}: $(x^{(0)},s^{(0)})\in\dom(f_1)\times\R^n$, $\sigma\in (0,1)$, $0<\alpha_{min}\leq \alpha_{max}$, $\beta_{max}>0$, $0<\gamma_{min}\leq \gamma_{max}$, $\tau \geq 0$.\\
{\textsc FOR $k=0,1,\ldots$}
\begin{itemize}
\item[]\begin{description}[labelwidth=4em,leftmargin =\dimexpr\labelwidth+\labelsep\relax, font=\mdseries]
\item[{\textsc STEP 1.}] Choose $\alpha_k\in[\alpha_{min},\alpha_{max}]$, $\beta_k\in [0,\beta_{max}]$
\item[{\textsc STEP 2.}] Compute $\tyk$ such that 
\begin{equation*}
0\in \partial_{\epsilon_k}\hk(\tyk;\xk,\sk),\ \ \ \mbox{ with }\epsilon_k = -\frac\tau 2 \hk(\tyk;\xk,\sk).
\end{equation*}
\item[{\textsc STEP 3.}] Choose $\gamma_k\in[\gamma_{min},\gamma_{max}]$.
\item[{\textsc STEP 4.}] Compute $\Delta_k= \hk(\tyk;\xk,\sk) - \gamma_k \|\xk-\sk\|^2$.
\item[{\textsc STEP 5.}] 
Compute the search direction
\begin{eqnarray*}
d_x\k &=& \tyk-\xk\\
d_s\k &=&\left( 1 + \frac{\beta_k}{\alpha_k}\right)(\tyk-\xk) + \gamma_k(\xk-\sk)\\
\end{eqnarray*}
\item[{\textsc STEP 6.}] Compute $\lamk\in (0,1]$ such that
\begin{equation}\nonumber
\Phi(\xk+\lambda_kd_x\k,\sk+\lambda_kd_s\k)\leq \Phi(\xk,\sk) + \sigma\lambda_k\Delta_k
\end{equation}
with the line--search backtracking algorithm.
\item[{\textsc STEP 7.}] Define the new point as
\begin{equation}\nonumber
(\xkk,\skk) = \left\{\begin{array}{l}
(\tyk,\xk) \ \ \mbox{ if } \Phi(\tyk,\xk)\leq \Phi(\xk,\sk) + \sigma\lambda_k\Delta_k\\
(\xk+\lambda_kd_x\k,\sk+\lambda_kd_s\k) \ \ \mbox{ otherwise}
\end{array}
\right.
\end{equation}
\end{description}
\end{itemize}
{\textsc END}\\
\end{algorithm}

The descent direction and the backtracking procedure described above are at the basis of the new algorithm, named \iPianoLA{} (inertial Proximal inexact line--search algorithm), which formally consists in a descent method for the merit function $\Phi(x,s)$ and exploits the inertial inexact proximal gradient point for defining the search direction. In particular, it generates a sequence of iterates $\{(\xk,\sk)\}_\kinN$ and a sequence of steplength parameters $\{\lamk\}_{\kinN}$ fulfilling the following decrease condition 
\begin{equation*}
\Phi(\xkk,\skk)\leq \Phi(\xk,\sk)+\sigma\lamk\Delta_k\ \ \ \mbox{ and } \ \ \
\Phi(\xkk,\skk)\leq \Phi(\tyk,x^{(k)}).
\end{equation*} 
The connection with the inertial methods is in fact that, when $(\xkk ,\skk)=(\tyk,\xk)$ is selected at {\textsc{STEP 7}}, the following iteration will consist of an actual inertial step.
In practice, the condition at {\textsc{STEP 7}} can be considered as an alternative acceptance rule for the inexact inertial proximal gradient point, having a similar role than the condition at {\textsc{STEP 6}} of \iPiano{} (see also \eqref{eq:descent_inexact}). The main difference is that here the acceptance condition is based on the Armijo inequality, while the one in \iPiano{} is based on the Descent Lemma.

Notice that the inexact evaluation of the proximity operator in \iPianoLA{} is required only once per iteration, unlike in \iPiano{}, where it is needed at each step of the loop for selecting the parameter $L_k$, until inequality \eqref{eq:descent_inexact} is satisfied. The Armijo condition also results in a larger freedom of choosing the parameters $\alpha_k,\beta_k$, which here satisfy very minimal assumptions. A possible strategy to choose these parameters preserving both the theoretical prescriptions and the benefits deriving from the presence of an inertial step is described in Section \ref{sec:numerical}.

\silviacorr{The Armijo line--search strategy is very well established in optimization, however, to the best of our knowledge, its use in combination with an inertial/heavy-ball step is completely new. Moreover, even if the use of a { merit} function is quite common in the theoretical analysis of optimization methods, here we propose to explicitly compute it in the algorithm implementation, enforcing its decrease by means of the line--search procedure.}

\silviacorr{Under the same assumptions stated for \iPiano{}, we can show the convergence of \iPianoLA{} to a stationary point (the proof is postponed in Section \ref{subsec:convergence_ipianola}).}

\begin{theorem}\label{thm:convipianola}
Suppose that the function $\mathcal{F}$ defined in \eqref{eq:surro} is a KL function. Moreover, assume that the sequence $\{x^{(k)}\}_{k\in\mathbb{N}}$ generated by \iPianoLA{} is bounded. Then, $\{x^{(k)}\}_{k\in\mathbb{N}}$ converges to a stationary point of $f$.
\end{theorem}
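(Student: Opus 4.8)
The plan is to verify that the sequences generated by \iPianoLA{} satisfy Conditions \ref{definition:abstract} with the merit function $\Phi$ of \eqref{Phidef} as the Lyapunov function and $\mathcal{F}$ of \eqref{eq:surro} as the function whose subdifferential is controlled, and then invoke Theorem \ref{thm:convergence}. Concretely, I would set $\uk=\tyk$, let $\rk$ be the error parameter $\rho^{(k)}$ associated with the inexact computation of $\tyk$ (presumably something like $\rho^{(k)}=\sqrt{-\tau\,\hk(\tyk;\xk,\sk)}$ or a similar quantity built from $\epsilon_k$, so that $\mathcal{F}(\tyk,\rk)=f(\tyk)+\tfrac12\rho^{(k)2}=f(\tyk)+\epsilon_k$), take $s^{(k)}$ in Conditions~\ref{definition:abstract} to be the pair $(\xk,\sk)$ consistent with the state of the algorithm, and choose $d_k=\|\dk\|$ or a comparable majorant such as $\|d_x\k\|+\|\xk-\sk\|$ — whatever makes the Armijo decrease and the subdifferential bound line up.

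The key steps, in order, are: (1) prove Algorithm~\ref{algo:LS} is well posed, i.e.\ that $\dk$ is genuinely a descent direction for $\Phi$ at $(\xk,\sk)$ with directional derivative controlled by $\Delta_k<0$ (this is Lemma~\ref{lemma:1_xs}, using \eqref{xs1}, the definition \eqref{hsigma} of $\hk$, the inexactness bound $\hk(\tyk;\xk,\sk)\le 0$ from \eqref{inexact_crit2}, and the Lipschitz estimate \eqref{nablaH0lip}), so that the backtracking terminates after finitely many steps; (2) derive from \textsc{Step 6}--\textsc{Step 7} and the finiteness of the line-search a uniform lower bound $\lamk\ge\lambda_{\min}>0$ together with a sufficient-decrease inequality of the form $\Phi(\xkk,\skk)+a_k d_k^2\le \Phi(\xk,\sk)$, establishing \ref{H1}; (3) establish \ref{H2} by showing $\Phi(\xkk,\skk)\le \mathcal{F}(\tyk,\rk)\le\Phi(\xk,\sk)+r_k$ with $r_k\to0$ — the left inequality should follow from \textsc{Step 7} (in particular the branch $(\xkk,\skk)=(\tyk,\xk)$ gives $\Phi(\tyk,\xk)=f(\tyk)+\tfrac12\|\tyk-\xk\|^2$, to be compared with $\mathcal{F}(\tyk,\rk)=f(\tyk)+\epsilon_k$) and the right one from unfolding $h\k$ and using $\hk(\tyk;\xk,\sk)\le 0$; (4) establish the relative error condition \ref{H3}: from the $\epsilon_k$-subdifferential inclusion \eqref{inexact_crit3} extract an element $w^{(k)}\in\partial f_1(\tyk)$ that is $O(\|\tyk-\xk\|+\|\xk-\sk\|+\sqrt{\epsilon_k})$, add $\nabla f_0(\tyk)$ using \eqref{sum:subdiff} and Lipschitz continuity of $\nabla f_0$, and also handle the $\rho$-component $\partial_\rho\mathcal{F}=\rho^{(k)}$, which is itself $O(\sqrt{-\tau\,\hk})=O(\|\tyk-\xk\|+\|\xk-\sk\|)$, so everything is bounded by a finite combination of $d_{k+1-i}$'s plus a summable remainder; (5) check the continuity condition \ref{H6} (from $\|\tyk-\xk\|\to0$ along the relevant subsequences, which follows from $d_k\to0$, together with continuity of $f$ and closedness of $f_1$ on its domain) and the distance condition \ref{H4} (both $\|\xkk-\xk\|$ and $\|\skk-\sk\|$ are $\le$ const$\cdot d_{k+k'}$ by the explicit formulas in \textsc{Step 5}/\textsc{Step 7}); (6) verify \ref{H7} using the prescribed bounds $\alpha_k\in[\alpha_{\min},\alpha_{\max}]$, $\beta_k\in[0,\beta_{\max}]$, $\gamma_k\in[\gamma_{\min},\gamma_{\max}]$ and $\lamk\ge\lambda_{\min}$, which make $a_k$ and $b_k$ uniformly bounded away from $0$ (so $\sum b_k=\infty$, $\inf a_k>0$, $\sup 1/(a_kb_k)<\infty$). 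Having all of \ref{H1}--\ref{H7}, and using that $\mathcal F$ is KL by hypothesis and $\{\xk\}$ bounded (hence $\{(\xk,\rk)\}$ bounded, since $\rho^{(k)}\to0$), Theorem~\ref{thm:convergence} yields that $\{\xk\}$ converges, and that its limit $(x^*,0)$ is stationary for $\mathcal F$; finally, since $\partial\mathcal F(x^*,0)=\partial f(x^*)\times\{0\}$, stationarity of $\mathcal F$ at $(x^*,0)$ is equivalent to stationarity of $f$ at $x^*$.

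I expect the main obstacle to be two intertwined points in steps (2)--(4): pinning down exactly the right definition of $\rho^{(k)}$ (and the companion of $d_k$) so that simultaneously \ref{H1} holds with $a_k$ bounded below, \ref{H2} holds with a summable $r_k$, and \ref{H3} closes with the subdifferential of $\mathcal F$ at $(\tyk,\rk)$ — rather than at $\xkk$ — bounded by $d_{k+1}$-type terms. The subtlety is that the Armijo decrease naturally controls $\Phi$ at $(\xk+\lamk d_x\k,\sk+\lamk d_s\k)$, but the subdifferential information from \eqref{inexact_crit3} lives at $\tyk$, so one must carefully exploit the second clause of \textsc{Step 7} (comparing $\Phi(\tyk,\xk)$ against the Armijo reference value) to bridge the two; this is precisely the mechanism that the abstract scheme's decoupling of $\uk$ from $\xkk$ was designed to accommodate, and making it quantitative is the crux of the proof. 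The well-posedness of the line-search (step (1)) and the parameter bookkeeping (step (6)) are comparatively routine, relying on the Lipschitz estimate \eqref{nablaH0lip} and the explicit parameter ranges.
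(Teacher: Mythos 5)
Your overall strategy---verifying \ref{H1}--\ref{H7} for the pair $(\Phi,\mathcal F)$ and invoking Theorem \ref{thm:convergence}, then using the separable structure of $\partial\mathcal F$ to pass from stationarity of $\mathcal F$ to stationarity of $f$---is the same as the paper's, and your treatment of \ref{H1}, \ref{H4}, \ref{H6} and \ref{H7} matches Propositions \ref{checkH1} and \ref{prop:H4}. However, there is a genuine gap in your steps (3)--(4): you take $\uk=\tyk$ and claim that the inclusion \eqref{inexact_crit3} lets you ``extract an element $w^{(k)}\in\partial f_1(\tyk)$'' of size $O(\|\tyk-\xk\|+\|\xk-\sk\|+\sqrt{\epsilon_k})$. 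This is not available: \eqref{inexact_crit3} is an $\epsilon_k$-subdifferential inclusion, and $\partial_{\epsilon_k} f_1(\tyk)\supseteq\partial f_1(\tyk)$ is a containment in the wrong direction, so it yields no small element of the \emph{exact} subdifferential $\partial f(\tyk)$, hence no bound on $\|\partial\mathcal F(\tyk,\rk)\|_-$ for \ref{H3}. This is precisely the obstruction discussed in Section \ref{sec:abstract} (and in the cited VMILA literature): the relative error condition cannot be enforced at the inexactly computed point. The paper's resolution, which your plan misses, is to set $\uk=\hyk$, the \emph{exact} minimizer \eqref{hydef} that is never computed: there the exact optimality condition, together with Lemma \ref{lemma:crucialine1} and the subgradient bound \eqref{ine44}, gives a true subgradient $\hat v^{(k)}\in\partial f(\hyk)$ controlled by $\sqrt{-\hk(\tyk;\xk,\sk)}+\|\xk-\sk\|$, i.e.\ by $\sqrt{-\Delta_k}=d_k$, which is what closes \ref{H3} (Proposition \ref{checkH3}); correspondingly \ref{H2} is obtained by sandwiching $\mathcal F(\hyk,\rk)$ between $\Phi(\xkk,\skk)\le\Phi(\tyk,\xk)$ and $\Phi(\xk,\sk)+r_k$ via \eqref{crucialine1}--\eqref{inef2}, with $\rk$ as in \eqref{rhok_xs} rather than $\tfrac12\rho^{(k)2}=\epsilon_k$.

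A secondary problem with your concrete choice $\tfrac12\rho^{(k)2}=\epsilon_k$: even the left inequality of \ref{H2} fails in general, since when \textsc{Step 7} accepts $(\xkk,\skk)=(\tyk,\xk)$ you would need $\tfrac12\|\tyk-\xk\|^2\le\epsilon_k=-\tfrac\tau2\hk(\tyk;\xk,\sk)$, whereas \eqref{dist33} only gives $\tfrac12\|\tyk-\xk\|^2\le\tfrac{\alpha_k}{\theta}\bigl(-\hk(\tyk;\xk,\sk)\bigr)$, and $\alpha_k/\theta\le\tau/2$ is not guaranteed (it fails for small $\tau$). You correctly sensed that ``pinning down the right definition of $\rho^{(k)}$'' and bridging the Armijo decrease with the subdifferential information is the crux, but the bridge is not between $\tyk$ and $\xkk$: it is the substitution of the uncomputed exact point $\hyk$ for $\tyk$ in both \ref{H2} and \ref{H3}, made quantitative by Lemma \ref{lemma:crucialine1}. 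Without that device the verification of \ref{H3} does not go through, so as written the proposal does not prove the theorem.
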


We refer the reader to Remark \ref{remark:KL}-\ref{remark:boundedness} for conditions on $f$ guaranteeing that $\mathcal F$ is a KL function and the sequence $\{x^{(k)}\}_{k\in\mathbb{N}}$ is bounded.


\section{Numerical illustration}\label{sec:numerical}
\silviacorr{The aim of this section is to apply our proposed methods to a couple of difficult problems arising in image restoration, by implementing them in accordance with the theoretical guarantees stated in Section \ref{sec:convergence}. }

The goal of image restoration is to recover a good quality image from a noisy blurred one. Following the variational approach, the clean image is obtained by solving an optimization problem with the structure \eqref{composite_problem}, where the objective function includes a measure of the data fidelity and a regularization/penalization term, incorporating all the a priori information on the desired solution. In addition, a nonnegativity constraint is often imposed for physical reasons. Then, the variational model has the following form: 
\begin{equation}\label{eq:model}
\min_{x\in\mathbb{R}^n_{\geq 0}} {\mathcal D}(Hx,g) +{\mathcal R}(x),
\end{equation}
where $g\in\R^n$ is the noisy blurred data, $H\in\R^{n\times n}$ represents the blurring operator, $\mathcal R:\R^n\to\R$ is the regularization term, and ${\mathcal D}(\cdot,\cdot) $ is the data discrepancy function. Moreover, $\mathbb{R}^n_{\geq 0}=\{x\in\mathbb{R}^n: x_i\geq 0, i=1,\ldots,n\}$ is the nonnegative orthant. 
In the following, we will consider two instances of the image restoration model \eqref{eq:model}, simulating two different kinds of noise, impulse noise and signal dependent Gaussian noise.

\subsection{Image denoising and deblurring in presence of impulse noise}\label{sec:first_test}

When data suffers of impulse noise, the more suitable function to measure the data discrepancy is the $\ell_1-$norm: 
\begin{equation}\nonumber
{\mathcal D}(Hx,g) = \|Hx-g\|_1.
\end{equation}
On the other side, with the aim to preserve some sharpness in the restored image, an edge preserving regularization term has to be included in the variational model. 
In this section we consider as regularization function the one proposed in \cite{Chen-Pock-Ranftl-Bischof-2013,Chen-Ranftl-Pock-2014b}, i.e.,
\begin{equation}\nonumber
{\mathcal R}(x) = \rho\sum_{\ell=1}^q{ \theta_\ell}\sum_{i=1}^n\log(1+(K_\ell x)_i^2),
\end{equation}
where the matrices $K_\ell\in\R^{n\times n}$ correspond to a convolution with a given filter $k_\ell$, while $\rho,\theta_\ell$ are positive parameters. In particular, the set of $48$ filters $k_\ell$ of size $7\times 7$ and corresponding coefficients $\theta_\ell$ have been computed with the approach proposed in \cite{Chen-Ranftl-Pock-2014b}. Finally,  it is possible to prove that ${\mathcal R}(x)$ has Lipschitz-continuous gradient, using the same arguments as in \cite{Bonettini-Prato-Rebegoldi-2021}. Then, setting $f_0(x) = {\mathcal R}(x)$ complies with assumptions \ref{assii}-\ref{assiii}. The nonnegativity constraint, expressed by means of the indicator function of the nonnegative orthant $\iota_{\geq 0}(x)$, can be included in the convex, nonsmooth term of the objective function, i.e., $f_1(x) = \|Hx-g\|_1 + \iota_{\geq 0}(x)$. The regularization parameter $\rho$ has been manually tuned in order to have a good quality restoration. Its value has been set equal to 0.08 for all the runs. Note that $f_1$ does not have a closed-form proximal operator, therefore it is necessary to employ implementable inexactness criteria as the one proposed in Section \ref{sec:inexact_inertial}. \silviacorr{Due to the nonconvexity and the lack of a closed form formula for the proximity operator, this problem is really challenging.}

The proposed algorithms have been implemented in { Matlab R2019a} on a laptop equipped with a { 2.60 GHz Intel Core i7-4510U} processor and { 8 GB} of RAM; the Matlab code is available online at \cite{iPila2021}. The parameters of Algorithm \iPiano{} have been set as $\delta=0.5$, $\gamma=10^{-5}$, $\eta=1.5$, $\omega=0.95$. The estimate of the Lipschitz constant $L_k$ is updated in a nondecreasing way. In particular, the initial value $L_0$ is set as an input parameter; then, at {\textsc{Step 1}} of each iteration, the first tentative value is set as $L_k=L_{k-1}$. This value is possibly increased until inequality \eqref{eq:descent_inexact} is met. Actually, more sophisticated updating rules for this parameter could be adopted; however the objective function of the considered image restoration problem is very costly to evaluate, therefore a more conservative parameters selection rule has shown to be more convenient. As for Algorithm \iPianoLA{}, the parameters settings aim to mimic that of the inertial method \iPiano{}. Indeed, introducing the additional parameters $\delta,\gamma,L_k,b_k>0$, with $b_k= \frac{L_k+2\delta}{L_k+2\gamma}$, we set $\alpha_k,\beta_k,\gamma_k$ as follows 
\begin{equation*}
\beta_k= \frac{b_k-1}{b_k-\frac 1 2}, \qquad \qquad \alpha_k=2 \frac{1-\beta_k}{L_k+2\gamma}, \qquad  \qquad  \gamma_k= \gamma.
\end{equation*}
This choice is motivated by the following arguments. If $L_k$ is a good local approximation of the Lipschitz constant satisfying condition 
\begin{equation}\label{eq:Lk_numerical}
f_0(\tyk)\leq f_0(x^{(k)})+\langle \nabla f_0(x^{(k)}),\tyk-x^{(k)}\rangle+\frac{L_k}{2}\|\tyk-x^{(k)}\|^2,
\end{equation}
then reasoning as in the proof of Proposition \ref{prop:fund1}, and choosing $\delta = 0.5$ and $\gamma_k=\gamma$, we obtain
$$
\Phi(\tyk,\xk) \leq \Phi(\xk,\sk) + \Delta_k + \frac 1{2\alpha_k}\|\tyk-\xk\|^2.
$$
Hence, if $L_k$ satisfies \eqref{eq:Lk_numerical}, the point $(\tyk,\xk)$ will be likely accepted at {\textsc{Step 7}}, as also confirmed by the numerical experience. This reasoning suggests to implement algorithm \iPianoLA{} as follows. We check the condition $\Phi(\tyk,\xk)\leq \Phi(\xk,\sk) + \sigma \Delta_k$ right after {\textsc{Step 4}}: if the condition holds, then the steps from 4 to 7 are skipped in order to avoid unnecessary computations, and the next point is directly defined as $(\xkk,\skk) = (\tyk,\xk)$; otherwise, the value $L_k$ is increased by a factor $\eta=1.5$, and the line--search in steps 5--7 is performed in order to compute the next point. By possibly increasing $L_k$, we aim at improving the chances that $(\tilde{y}^{(k+1)},x^{(k+1)})$ is accepted at the next iteration, thus reducing the computational time due to the line--search reductions steps. \silviacorr{We point out that this procedure for computing the parameters in \iPianoLA{} complies with all the theoretical prescriptions in Section \ref{sec:convergence}.} For \iPianoLA{}, the parameter in the Armijo condition, is set to $\sigma = 10^{-4}$, while $\gamma_k=\gamma=10^{-5}$ for all $k$. The initial estimate $L_0$ of the Lipschitz constant has been set equal to one for both algorithms.

For both \iPiano{} and \iPianoLA{}, the inexact proximal point $\tilde{y}^{(k)}$ is computed by approximately solving the dual of problem $\min_{y\in\R^n} h\k(y;\xk,\sk)$, which is a quadratic problem with simple constraints, with FISTA (more details can be found in \cite{Bonettini-Prato-Rebegoldi-2021} and references therein). The accuracy of the approximation is controlled by the parameter $\tau$: in our experiments we set $\tau = 10^{6}$, which corresponds to a good balancing of the computational complexity among inner and outer iterations. An extensive performance assessment of the algorithms with respect to this and other parameters is out of the scope of this paper, and it will be subject of future research.

The deblurring test problem has been obtained by first artificially blurring a good quality image, then simulating impulse noise on the 15\% of the pixels with \verb"imnoise". The clean and the noisy image are reported in Figure \ref{fig:3} (a) and (b). Assuming reflective boundary conditions, matrix-vector multiplications involving $H$ and $H^T$ can be implemented efficiently with the DCT transform. In particular, each inner (dual) iteration requires the computation of two matrix-vector products of this kind. In fact, in our experiments, only one or two inner iterations per outer iteration are, in general, needed to satisfy the inner stopping criterion.

We compare our proposed algorithms to an inexact version of the standard forward--backward method with backtracking in \cite{Beck-Teboulle-2009b} (denoted as iISTA in the following), and with the variable metric line--search based method denominated VMILAn \cite{Bonettini-Loris-Porta-Prato-Rebegoldi-2017,Bonettini-Prato-Rebegoldi-2021} equipped with its standard parameters settings. The inexact computation of the proximal gradient point in iISTA and VMILAn is implemented exactly in the same way as for \iPiano{} and \iPianoLA{}. Moreover, the initial estimate of the Lipschitz constant in iISTA is set to one, i.e., it is the same choice made for \iPiano{} and \iPianoLA{}.

For comparing different algorithms on a nonconvex problem, we adopt the same approach employed in \cite{Chouzenoux-etal-2014,Ochs-2019}: we first numerically estimated the ``optimal'' value $f^*$ as the smallest function value among the ones obtained by running each algorithm for a huge number of iterations. Then, we run again the three algorithms, computing at each iteration the relative difference of the objective function value with respect the reference value $f^*$. The result of this comparison is depicted in Figure \ref{fig:4}.
\begin{figure}
\begin{center}
\begin{tabular}{ccc}
\includegraphics[scale=0.3]{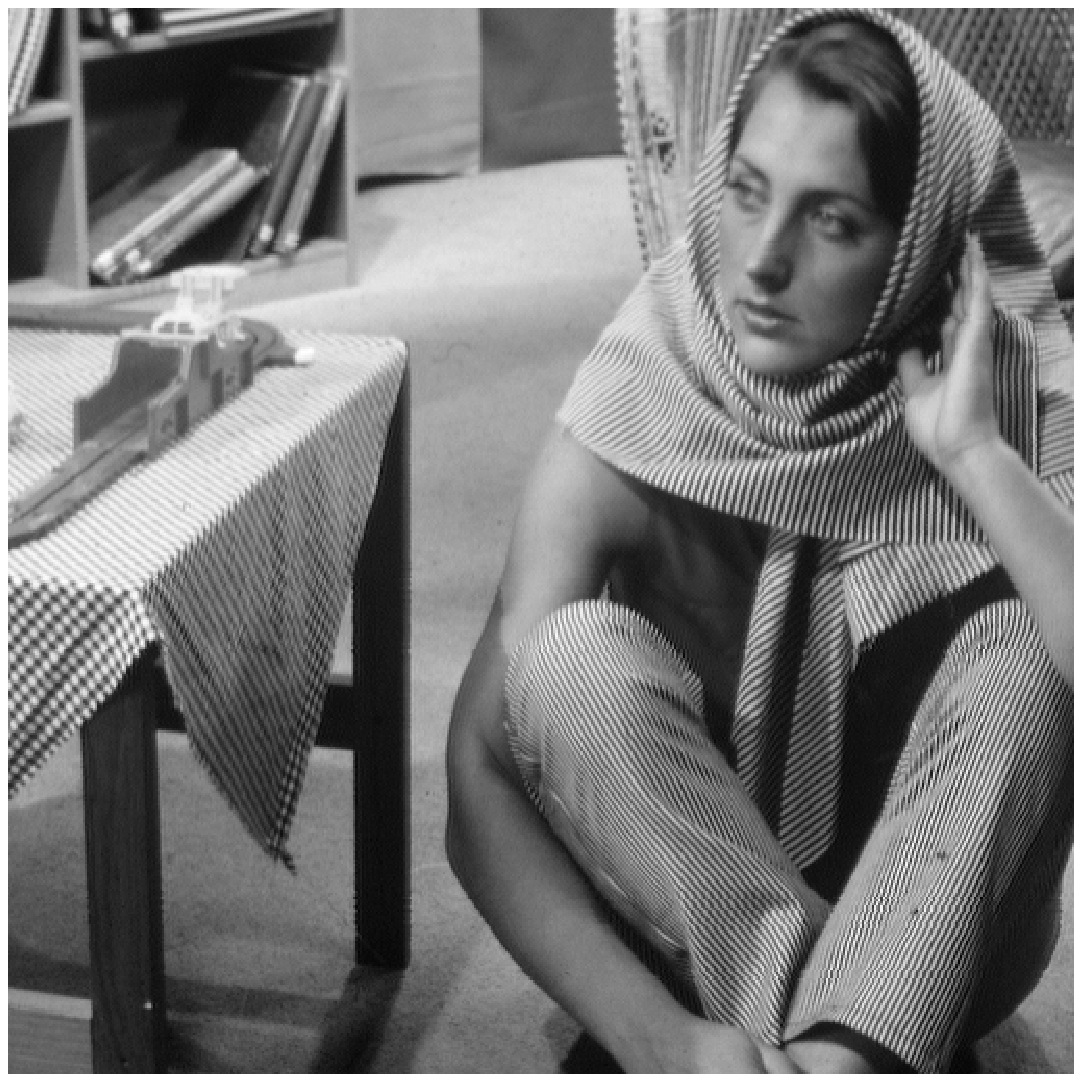}&\includegraphics[scale=0.3]{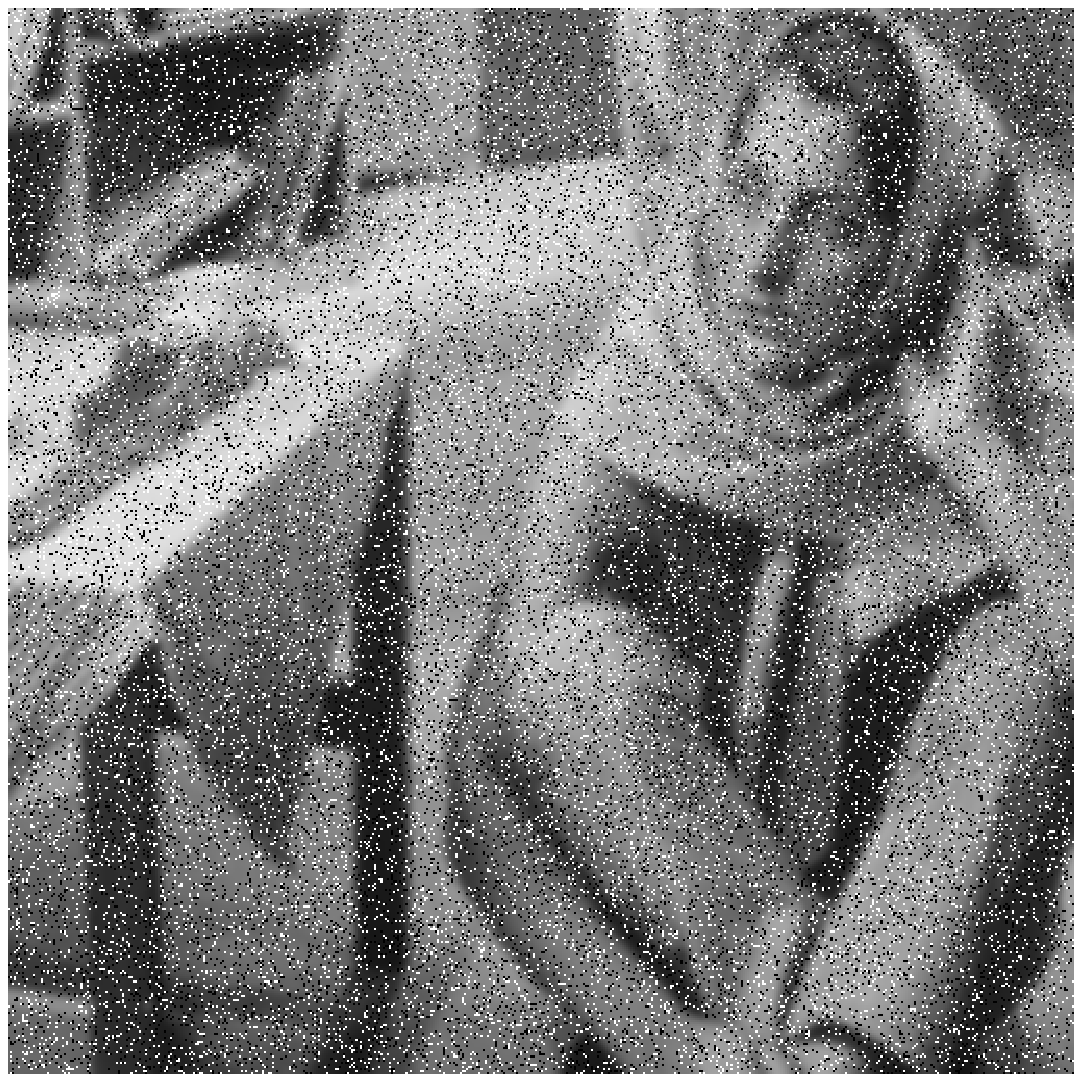}
&\includegraphics[scale=0.3]{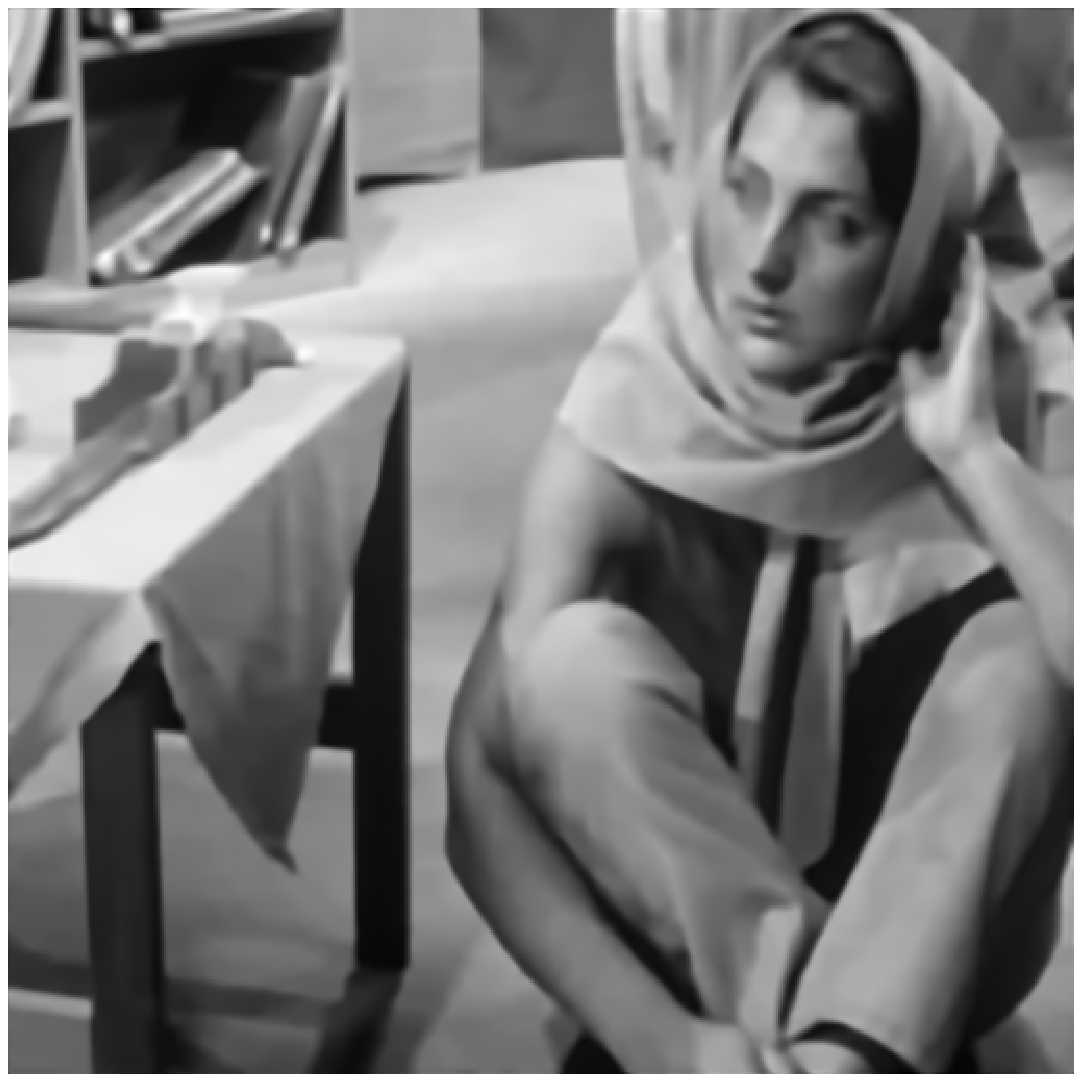}\\
(a) & (b) & (c)\\
\includegraphics[scale=0.5]{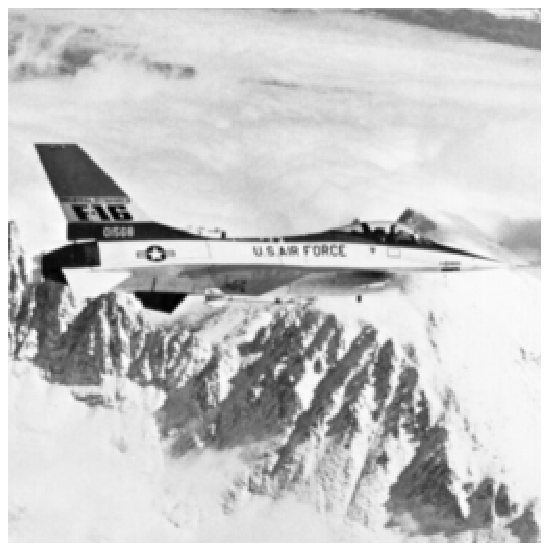}&\includegraphics[scale=0.5]{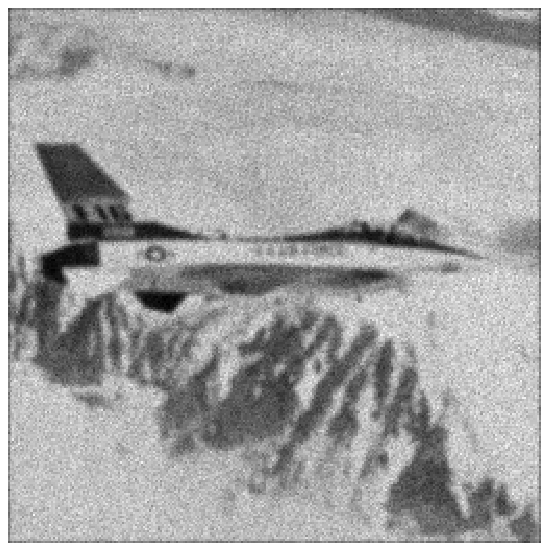}
&\includegraphics[scale=0.5]{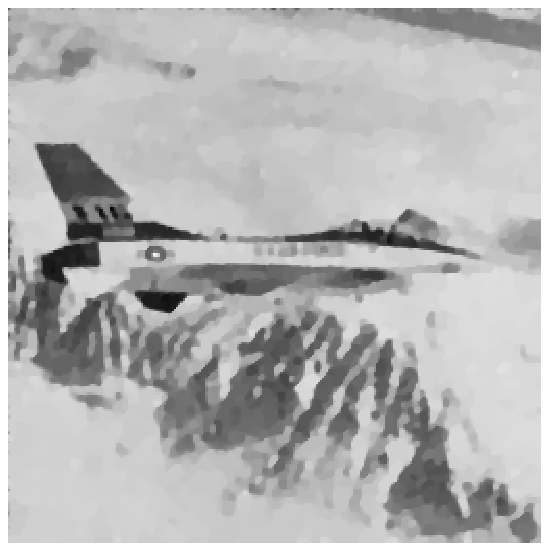}\\
(d) & (e) & (f)
\end{tabular}\end{center}
\caption{Image deblurring test problem. First row, impulse noise: (a) Original image ($512\times 512$ pixels); (b) Noisy blurred image, PSNR = 13.19; (c) Restored image, PSNR = 23.72. Second row, signal dependent Gaussian noise: (d) Original image ($256\times 256$ pixels); (e) Noisy blurred image, PSNR = 23.38; (f) Restored image, PSNR = 29.09}\label{fig:3}
\end{figure}
Panel (a) reports the values $(f(\xk)-f^*)/f^*$ provided by the algorithms \iPiano{}, \iPianoLA{}, VMILAn, and iISTA with respect to the computational time. 

\subsection{Image denoising and deblurring in presence of signal dependent Gaussian noise}\label{sec:second_test}

We now consider a second image deblurring problem, where we assume that the data contains signal dependent Gaussian noise. In this case the discrepancy functional is given by \cite{Chouzenoux-etal-2014}
\begin{equation}\nonumber
{\mathcal D}(Hx,g) = \frac 1 2 \sum_{i=1}^n\frac{((Hx)_i-g_i)^2}{a_i(Hx)_i+c_i}+\log(a_i(Hx)_i+c_i),
\end{equation} 
where $a_i,c_i$ are positive parameters. The functional above is nonconvex, smooth, and assuming that the entries of the blurring matrix $H$ are nonnegative, its domain contains the nonnegative orthant. Moreover, its gradient is Lipschitz continuous, even if an estimation of the Lipschitz constant is difficult to compute. As regularization term, we adopt the Total Variation function \cite{Rudin-Osher-Fatemi-1992}
\begin{equation}\nonumber
{\mathcal R}(x) = \rho\sum_{i=1}^n\|\nabla_ix\|,
\end{equation}
where the two components of $\nabla_i x\in\R^2$ contains the differences of the pixel $i$ with its vertical and horizontal neighbours, respectively. In this case, we can split the smooth and nonsmooth part of the objective function as $f = f_0+f_1$ with $f_0(x) = {\mathcal D}(Hx,g)$ and $f_1(x) =  {\mathcal R}(x) + \iota_{\geq0}(x)$.
For the minimization of the functional described above, we compare \iPiano{} and \iPianoLA{} with the variable metric forward backward algorithm VMFB proposed in \cite{Chouzenoux-etal-2014}, and again with iISTA. In particular, as a benchmark test, we adopt the same test problem provided in \cite{Logiciel}, with the same setting for the parameters $a_i,b_i, c_i, \rho$. The true image and the blurred noisy one are reported in Figure \ref{fig:3} (d) and (e).

As done in the previous section, we numerically compute an approximation of the optimal value $f^*$ {  by running all algorithms for a huge number of iterations, keeping the last function value for each algorithm, and then retaining the smallest value among them.} Then, we evaluate the optimization capability of the four algorithms in terms of the quantity $(f(\xk)-f^*)/f^*$ (see Figure \ref{fig:4} (b)). The parameters in \iPiano{}, \iPianoLA{} and iISTA are set in the same way as for the impulse noise test problem, while we run VMFB using the implementation released by the same authors \cite{Logiciel}.\\

The numerical results show that our proposed methods \iPiano{} and \iPianoLA{} are able to effectively solve challenging problems and that the presence of an inertial step can improve the effectiveness of standard forward--backward methods. On the one hand, \iPianoLA{} outperforms \iPiano{} on both test problems, which suggests that combining an inertial step with a linesearch along the descent direction (rather than along the arc) may be extremely beneficial in terms of computational times. On the other hand, \iPianoLA{} performs well also in comparison with VMILAn and VMFB, showing that inertial forward--backward algorithms can be competitive with variable metric approaches, provided that a sensible linesearch strategy is adopted.

\begin{figure}\begin{center}
\def\sclfct{0.4}
\begin{tabular}{cc}
\includegraphics[scale=\sclfct]{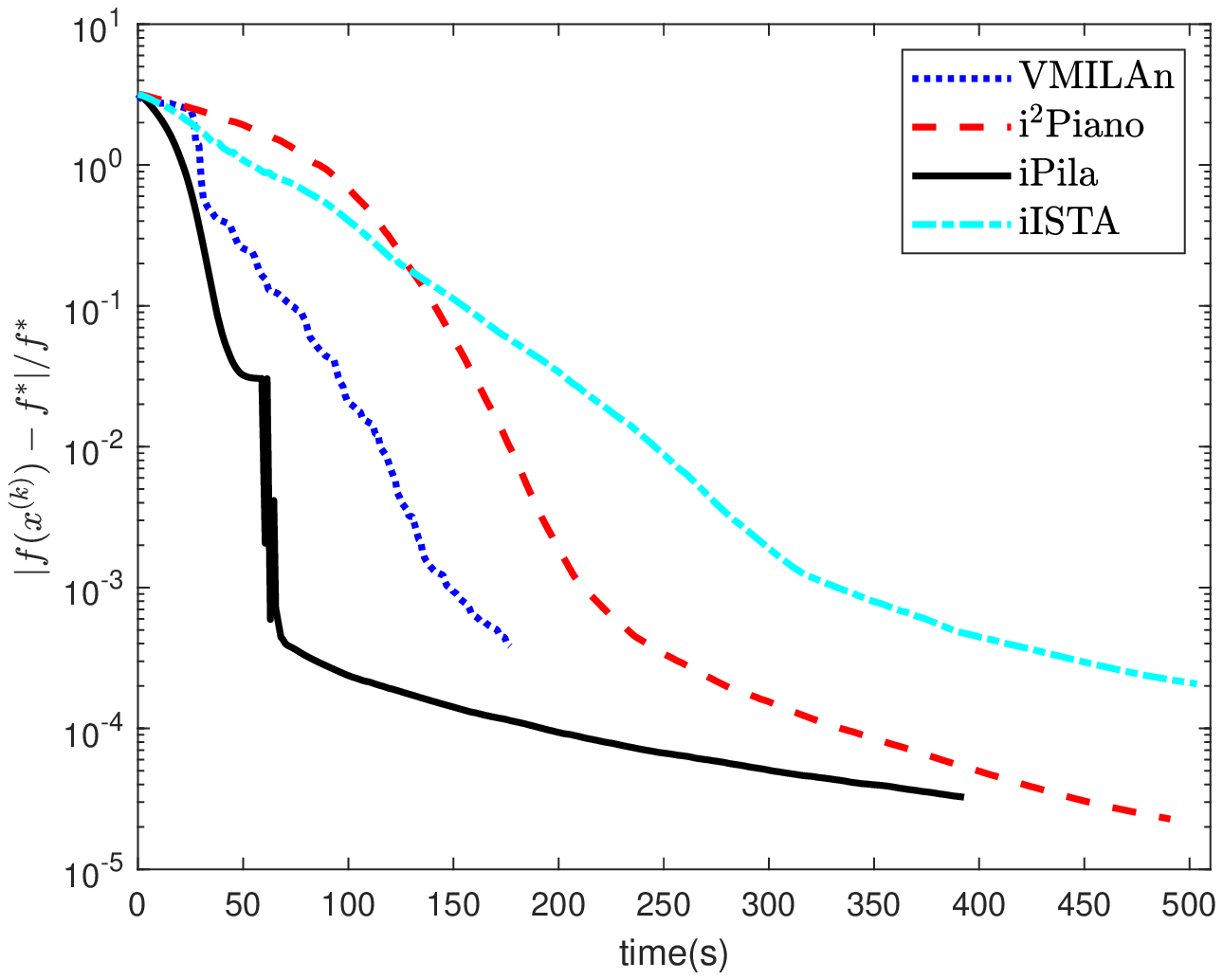}&\includegraphics[scale=\sclfct]{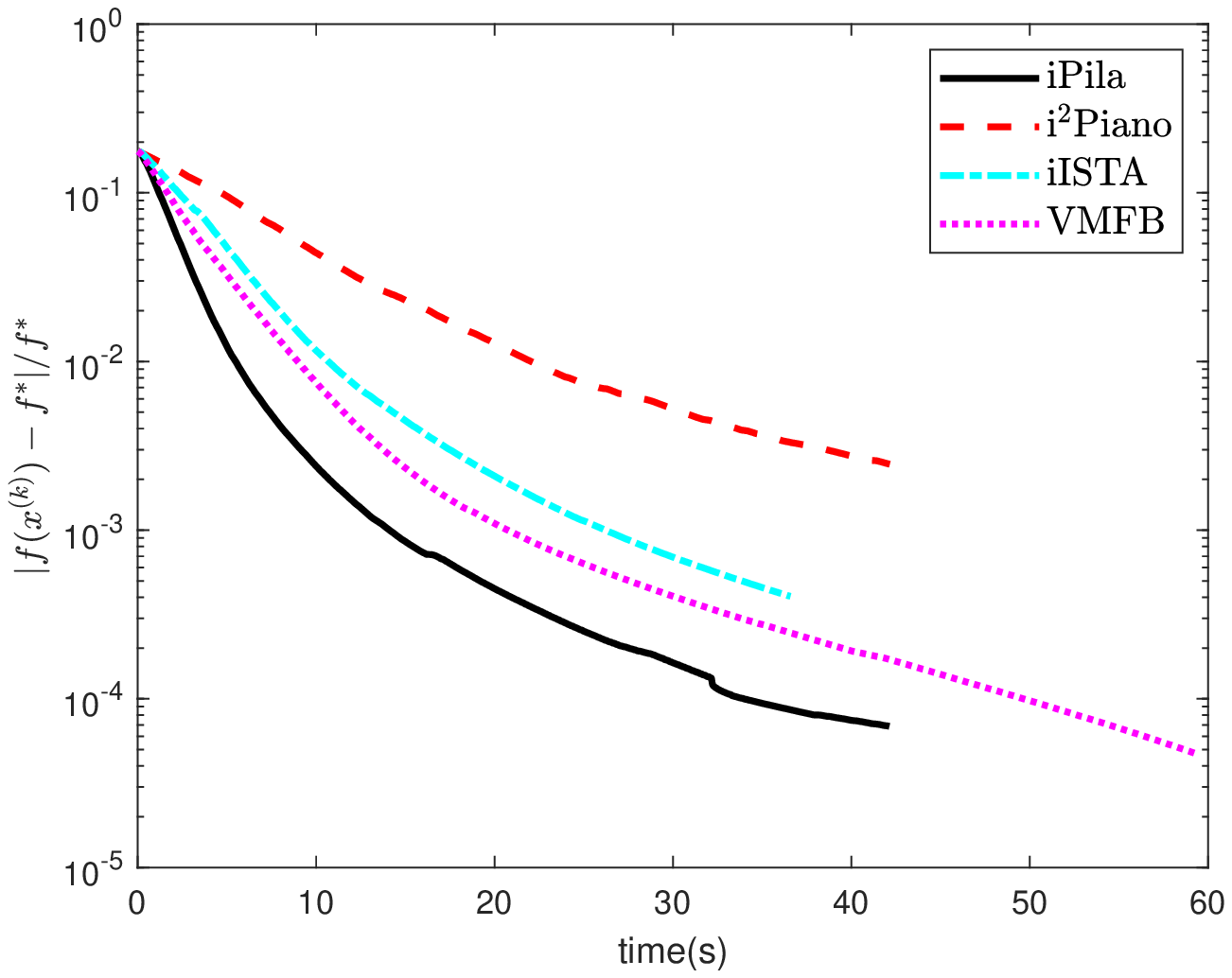}
\\
(a) & (b) 
\end{tabular}\end{center}
\caption{Relative decrease of the objective function with respect to the computational time for the two test problems. Panel (a): impulse noise. Panel (b): signal dependent Gaussian noise.}\label{fig:4}
\end{figure}

\section{Conclusions} 
We proposed two novel inertial-type forward--backward algorithms for solving nonsmooth nonconvex optimization problems. Both algorithms are equipped with implementable inexactness criteria for computing the proximal operator of the convex part. The first algorithm, i$^2$Piano, performs a classical backtracking procedure based on a local version of the Descent Lemma, whereas the second algorithm, iPila, is based on an innovative linesearch procedure along the descent direction of a suitable merit function. We showed that both algorithms converge to a stationary point of the problem, under some standard assumptions on the iterates sequence and the objective function. The convergence analysis is cast into an abstract framework that generalizes related work and thereby unfolds the formulation and convergence guarantees of our novel algorithms. We demonstrated the efficiency of the proposed algorithms on a couple of challenging image restoration problems, showing that the innovative approach employed in iPila may be beneficial in terms of computational times. Future work could be devoted to the design and analysis of novel effective rules for selecting the parameters in the iPila algorithm, in order to further improve its performance with respect to more traditional forward--backward algorithms. { Another possible development could be the adoption of a Bregman distance in the computation of the inexact proximal--gradient point of i2Piano and iPila, as done for other first-order methods in the KL framework \cite{Benning-et-al-21,Bolte-etal-2018,Bonettini-Loris-Porta-Prato-2016}.}

\appendix
\section{Appendix: convergence Analysis of \iPiano{} and \iPianoLA{}}\label{sec:convergence}
In this section we analyze the convergence properties of algorithms \iPiano{} and \iPianoLA{}, showing that they can be both considered as special cases of the abstract scheme presented in Section \ref{sec:abstract}. 
\subsection{Preliminary results}
We collect here new basic results concerning the inexactness criterion introduced in Section \ref{sec:inexact_inertial}, which is incorporated in our proposed algorithms. The following lemma is a consequence of the strong convexity of the function $h (\cdot;x,s)$ defined in \eqref{hsigma}, and will be often employed in the following. \silviacorr{In order to simplify the notation, here we omit the iteration index $k$.}
\begin{lemma} Suppose that Assumptions \ref{assi}--\ref{assii} hold true. For a given pair $(x,s)\in\dom(f_1)\times \R^n$, let $\hy,\ty$ be defined as in \eqref{hydef},\eqref{inexact_crit1}. Then, the following inequalities hold.
\begin{eqnarray}
\frac 1 {2\alpha} \|\hy-x\|^2 &\leq& \left(1+\frac \tau 2\right) (-h(\ty;x,s))\label{dist11}\\
\frac 1 {2\alpha} \|\ty-\hy\|^2&\leq& \frac{\tau}{2}  (-h(\ty;x,s))\label{dist22}\\
\frac \theta {2\alpha} \|\ty-x\|^2&\leq&  (-h(\ty;x,s)),\ \ \mbox{ with } \theta = 1/\left( \sqrt{1+\frac \tau 2}+\sqrt{\frac \tau 2}\right)^2\leq 1.\label{dist33}
\end{eqnarray}
\end{lemma}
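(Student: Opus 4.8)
The plan is to exploit the fact that, under Assumptions~\ref{assi}--\ref{assii}, the function $h(\cdot\,;x,s)$ defined in \eqref{hsigma} is the sum of the convex function $f_1$, an affine term, and the quadratic $\frac{1}{2\alpha}\|\cdot-x\|^2$, hence it is $\frac{1}{\alpha}$-strongly convex. Since $\hy$ is its (unique) minimizer, we have $0\in\partial h(\hy;x,s)$, and strong convexity applied with the zero subgradient yields the key inequality
\begin{equation*}
h(y;x,s)\geq h(\hy;x,s)+\frac{1}{2\alpha}\|y-\hy\|^2, \qquad \forall\, y\in\R^n.
\end{equation*}
I would also record two elementary consequences of the setup: first, $h(x;x,s)=0$, so the above with $y=x$ gives $h(\hy;x,s)\leq 0$; second, rewriting \eqref{inexact_crit1} as in \eqref{inexact_crit2}, namely $h(\ty;x,s)\leq\frac{2}{2+\tau}h(\hy;x,s)\leq 0$, and using $h(\hy;x,s)\leq 0$, we obtain $-h(\hy;x,s)\leq\bigl(1+\frac\tau2\bigr)\bigl(-h(\ty;x,s)\bigr)$.

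For \eqref{dist11}, I would apply the displayed strong convexity inequality at $y=x$: since $h(x;x,s)=0$, this reads $\frac{1}{2\alpha}\|\hy-x\|^2\leq -h(\hy;x,s)$, and then chain with $-h(\hy;x,s)\leq\bigl(1+\frac\tau2\bigr)\bigl(-h(\ty;x,s)\bigr)$ derived above. For \eqref{dist22}, I would instead apply strong convexity at $y=\ty$, which gives $\frac{1}{2\alpha}\|\ty-\hy\|^2\leq h(\ty;x,s)-h(\hy;x,s)$, and bound the right-hand side directly by $-\frac\tau2 h(\ty;x,s)$ using the inexactness criterion \eqref{inexact_crit1}.

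For \eqref{dist33}, the plan is to use the triangle inequality $\|\ty-x\|\leq\|\ty-\hy\|+\|\hy-x\|$ together with \eqref{dist11} and \eqref{dist22}. Writing $t:=-h(\ty;x,s)\geq 0$, these two bounds say $\frac{1}{\sqrt{2\alpha}}\|\hy-x\|\leq\sqrt{1+\tfrac\tau2}\,\sqrt t$ and $\frac{1}{\sqrt{2\alpha}}\|\ty-\hy\|\leq\sqrt{\tfrac\tau2}\,\sqrt t$; adding them and squaring yields $\frac{1}{2\alpha}\|\ty-x\|^2\leq\bigl(\sqrt{1+\tfrac\tau2}+\sqrt{\tfrac\tau2}\bigr)^2 t=\frac1\theta\,(-h(\ty;x,s))$, which is \eqref{dist33}; the bound $\theta\leq 1$ follows since $\bigl(\sqrt{1+\tfrac\tau2}+\sqrt{\tfrac\tau2}\bigr)^2\geq 1+\tfrac\tau2\geq 1$. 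I do not anticipate a genuine obstacle here: the only points requiring a little care are correctly identifying the strong convexity modulus as $1/\alpha$ and keeping track of the $\sqrt{2\alpha}$ normalization when passing from the squared-norm estimates to the triangle-inequality argument in \eqref{dist33}.
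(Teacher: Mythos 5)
Your proposal is correct and takes essentially the same route as the paper: the paper obtains \eqref{dist11}--\eqref{dist22} by combining the strong convexity of $h(\cdot;x,s)$ with the inexactness criterion \eqref{inexact_crit1} (deferring the details to a cited lemma, which you have simply written out explicitly), and proves \eqref{dist33} by expanding $\|\ty-x\|^2=\|\ty-\hy+\hy-x\|^2$ and applying Cauchy--Schwarz together with \eqref{dist11}--\eqref{dist22}, which is exactly your triangle-inequality-then-square computation. No gaps.
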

\begin{proof}
Inequalities \eqref{dist11}--\eqref{dist22} follow by combining the strong convexity of the function $h(\cdot;x,s)$ and condition \eqref{inexact_crit1} as in \cite[Lemma 2]{Bonettini-Prato-Rebegoldi-2021}. As for \eqref{dist33}, we have
\begin{eqnarray*}
\frac{1}{2\alpha}\|\ty-x\|^2&=& \frac{1}{2\alpha}\|\ty-\hy + \hy-x\|^2 = \frac{1}{2\alpha}\|\ty-\hy\|^2 + \frac{1}{2\alpha}\|\hy-x\|^2 + \frac{1}{\alpha}\langle \ty-\hy,\hy-x\rangle\\
&\leq & \frac{1}{2\alpha}\|\ty-\hy\|^2 + \frac{1}{2\alpha}\|\hy-x\|^2 + \frac{1}{\alpha} \|\ty-\hy\|\cdot \|\hy-x\|\\
&\leq & \left(1+\frac \tau 2\right) (-h(\ty;x,s)) + \frac{\tau}{2}  (-h(\ty;x,s)) + 2\sqrt{1+\frac \tau 2}\sqrt{\frac \tau 2}(-h(\ty;x,s))\\
&=& \left( \sqrt{1+\frac \tau 2}+\sqrt{\frac \tau 2}\right)^2(-h(\ty;x,s)),
\end{eqnarray*}
where the last inequality follows from the application of \eqref{dist11}-\eqref{dist22}.
\end{proof}

The next lemma provides a subgradient $\hat{v}\in\partial f(\hat{y})$ whose norm is bounded from above by a quantity containing $\sqrt{ - h(\ty;x,s)} $. Its proof is omitted since it is almost identical to the one of Lemma 3 in \cite{Bonettini-Prato-Rebegoldi-2021}.
\begin{lemma} 
Suppose Assumptions \ref{assi}--\ref{assiii} hold true. Let $x$ be a point in $\dom(f_1)$ and let $\hy,\ty$ be defined as in \eqref{hydef}--\eqref{inexact_crit1}. Moreover, assume that $\alpha\in [\alpha_{min},\alpha_{max}]$, with $0<\alpha_{min}\leq\alpha_{max}$ and $\beta\in[0,\beta_{max}]$, with $\beta_{max}\geq 0$. Then, there exists a subgradient $\hat v\in\partial f(\hy)$ such that
\begin{eqnarray}
\|\hat v\| &\leq & p(\|\hy-x\| + \|x-s\|) \label{ine442}\\
&\leq& q(\sqrt{ - h(\ty;x,s)} + \|x-s\|) \label{ine44},
\end{eqnarray}
where the two constants $p,q$ depend only on $\alpha_{min},\alpha_{max},\beta_{max}$ and on the Lipschitz constant $L$.
\end{lemma}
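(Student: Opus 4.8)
The plan is to exhibit an explicit element of $\partial f(\hy)$ directly from the stationarity condition that characterizes $\hy$, then bound its norm by the triangle inequality together with the Lipschitz continuity of $\nabla f_0$, and finally obtain \eqref{ine44} by a one-line substitution using \eqref{dist11}.

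First I would invoke the optimality condition \eqref{eq:optimal}: since $\hy$ is the exact minimizer of $h(\cdot\,;x,s)$ from \eqref{hydef}, we have
\[
-\tfrac{1}{\alpha}\bigl(\hy-x+\alpha\nabla f_0(x)-\beta(x-s)\bigr)\in\partial f_1(\hy).
\]
Because $\hy$ is a proximal point of $f_1$ it lies in $\dom(f_1)$, so the sum rule \eqref{sum:subdiff} is available at $\hy$, and hence
\[
\hat v:=\nabla f_0(\hy)-\tfrac{1}{\alpha}\bigl(\hy-x+\alpha\nabla f_0(x)-\beta(x-s)\bigr)
=\bigl(\nabla f_0(\hy)-\nabla f_0(x)\bigr)-\tfrac1\alpha(\hy-x)+\tfrac\beta\alpha(x-s)
\]
belongs to $\partial f(\hy)$. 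Next I would estimate $\|\hat v\|$ via the triangle inequality and the $L$-Lipschitz continuity of $\nabla f_0$ on $\dom(f_1)$ (Assumption~\ref{assiii}), which is legitimate since both $x$ and $\hy$ lie in $\dom(f_1)$:
\[
\|\hat v\|\le L\|\hy-x\|+\tfrac1\alpha\|\hy-x\|+\tfrac\beta\alpha\|x-s\|
\le\Bigl(L+\tfrac{1}{\amin}\Bigr)\|\hy-x\|+\tfrac{\beta_{max}}{\amin}\|x-s\|,
\]
which yields \eqref{ine442} with, say, $p=\max\{L+1/\amin,\ \beta_{max}/\amin\}$, a constant depending only on $L,\amin,\beta_{max}$.

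Finally, to pass from \eqref{ine442} to \eqref{ine44} I would feed \eqref{dist11} into the bound just obtained: from $\tfrac{1}{2\alpha}\|\hy-x\|^2\le(1+\tfrac\tau2)\,(-h(\ty;x,s))$ and $\alpha\le\amax$ we get $\|\hy-x\|\le\sqrt{2\amax(1+\tfrac\tau2)}\,\sqrt{-h(\ty;x,s)}$, and substituting gives \eqref{ine44} with $q=p\cdot\max\{\sqrt{2\amax(1+\tau/2)},\,1\}$, which besides $L,\amin,\amax,\beta_{max}$ involves only the fixed tolerance $\tau$. I do not expect a genuine obstacle: the argument is elementary bookkeeping, and the single point deserving care is the verification that $\hy\in\dom(f_1)$, so that both the subdifferential sum rule and the Lipschitz estimate for $\nabla f_0$ may be applied — this is immediate because $\hy$ is by definition the proximal point $\prox_{\alpha f_1}(x-\alpha\nabla f_0(x)+\beta(x-s))$ of the convex function $f_1$.
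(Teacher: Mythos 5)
Your proof is correct and follows essentially the same route the paper intends (the paper omits the argument, pointing to Lemma 3 of Bonettini--Prato--Rebegoldi 2021, which proceeds in exactly this way): the exact optimality condition \eqref{eq:optimal} combined with the sum rule \eqref{sum:subdiff} at $\hy\in\dom(f_1)$ yields the explicit subgradient, the $L$-Lipschitz continuity of $\nabla f_0$ together with $\alpha\geq\amin$, $\beta\leq\beta_{max}$ gives \eqref{ine442}, and \eqref{dist11} converts it into \eqref{ine44}. Your observation that $q$ additionally involves the fixed tolerance $\tau$ is accurate and harmless --- the same implicit dependence appears in the constants of Lemma \ref{lemma:crucialine1} --- so it does not affect how the lemma is used later.
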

%

The following lemma is the equivalent of Lemma 4-5 in \cite{Bonettini-Prato-Rebegoldi-2021}.

\begin{lemma}\label{lemma:crucialine1}
Suppose Assumptions \ref{assi}--\ref{assiii} hold true and assume $0<\alpha_{min}\leq\alpha\leq\alpha_{max}$, $\beta\in [0,\beta_{max}]$. Let $(x,s)$ be a point in $\dom(f_1)\times \R^n$ and let $\hy,\ty$ be defined as in \eqref{hydef}--\eqref{inexact_crit1}, for some $\tau\geq 0$. Then, there exists $c,d,\bar c,\bar d \in\R$ depending only on $\alpha_{min},\alpha_{max},\beta_{max},\tau$ such that
\begin{eqnarray}
f(\hy)&\geq& f(\ty) +ch(\ty;x,s) - d \|x-s\|^2\label{crucialine1}\\
f(\hy)&\leq & f(x) - \bar c h(\ty;x,s)+ \bar d\|x-s\|^2.\label{inef2}
\end{eqnarray}
\end{lemma}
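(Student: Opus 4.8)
The plan is to combine the Descent Lemma for $f_0$ — applicable on the convex set $\dom(f_1)$, to which $x$, $\hy$ and $\ty$ all belong under the standing assumptions — with the elementary identity read off from the definition \eqref{hsigma},
\[
f_1(y) = f_1(x) + h(y;x,s) - \langle \nabla f_0(x) - \tfrac{\beta}{\alpha}(x-s),\, y-x\rangle - \tfrac{1}{2\alpha}\|y-x\|^2, \qquad \forall\, y\in\R^n,
\]
and then to absorb the remainder terms using the distance estimates \eqref{dist11}--\eqref{dist33} together with Young's inequality. As in the preceding lemmas, the iteration index $k$ is omitted throughout.

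To obtain \eqref{crucialine1}, I would evaluate this identity at $y=\hy$ and at $y=\ty$, subtract, and add the two Descent Lemma bounds $f_0(\hy)\geq f_0(x)+\langle\nabla f_0(x),\hy-x\rangle-\tfrac{L}{2}\|\hy-x\|^2$ and $-f_0(\ty)\geq -f_0(x)-\langle\nabla f_0(x),\ty-x\rangle-\tfrac{L}{2}\|\ty-x\|^2$; the terms containing $\nabla f_0(x)$ cancel, leaving
\[
f(\hy)-f(\ty) \;\geq\; \bigl(h(\hy;x,s)-h(\ty;x,s)\bigr) + \tfrac{\beta}{\alpha}\langle x-s,\hy-\ty\rangle - \Bigl(\tfrac{L}{2}+\tfrac{1}{2\alpha}\Bigr)\|\hy-x\|^2 - \Bigl(\tfrac{L}{2}-\tfrac{1}{2\alpha}\Bigr)\|\ty-x\|^2 .
\]
Each term on the right is then bounded from below: $h(\hy;x,s)-h(\ty;x,s)\geq\tfrac{\tau}{2}h(\ty;x,s)$ by \eqref{inexact_crit1}; $\tfrac{\beta}{\alpha}\langle x-s,\hy-\ty\rangle\geq -\tfrac{\beta}{2\alpha}\|x-s\|^2+\tfrac{\beta\tau}{2}h(\ty;x,s)$ by Young's inequality and \eqref{dist22}; the $\|\hy-x\|^2$ term via \eqref{dist11}; and the $\|\ty-x\|^2$ term, after replacing its coefficient by its positive part, via \eqref{dist33}. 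Since $h(\ty;x,s)\leq 0$, collecting the accumulated nonnegative coefficient of $h(\ty;x,s)$ into a constant $c$ and that of $\|x-s\|^2$ into a constant $d$ yields \eqref{crucialine1}.

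For \eqref{inef2} I would instead use only the upper Descent Lemma estimate $f_0(\hy)\leq f_0(x)+\langle\nabla f_0(x),\hy-x\rangle+\tfrac{L}{2}\|\hy-x\|^2$ together with the identity at $y=\hy$, which gives $f(\hy)\leq f(x)+h(\hy;x,s)+\tfrac{\beta}{\alpha}\langle x-s,\hy-x\rangle+\bigl(\tfrac{L}{2}-\tfrac{1}{2\alpha}\bigr)\|\hy-x\|^2$; here $h(\hy;x,s)\leq h(\ty;x,s)\leq 0$ since $\hy$ is the minimizer of $h(\cdot;x,s)$, the cross term is controlled by Young's inequality and \eqref{dist11}, and the quadratic term by its positive part and \eqref{dist11}, so that collecting constants produces \eqref{inef2}. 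The computation is routine; the only point requiring care is the bookkeeping of constants — in particular treating the coefficient $\tfrac{L}{2}-\tfrac{1}{2\alpha}$, whose sign is not fixed, by passing to its positive part so that the estimates hold uniformly for $\alpha\in[\alpha_{min},\alpha_{max}]$ — and checking that the accumulated constants stay finite and depend only on $\alpha_{min},\alpha_{max},\beta_{max},\tau$ and the fixed Lipschitz constant $L$, exactly as in \cite[Lemmas~4--5]{Bonettini-Prato-Rebegoldi-2021}.
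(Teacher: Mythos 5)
Your proposal is correct, and for \eqref{inef2} it is essentially the paper's own argument: upper Descent Lemma at base point $x$, recognition of $h(\hat y;x,s)\le 0$, Young's inequality on the inertial cross term, and \eqref{dist11} to absorb $\|\hat y-x\|^2$, giving constants of the same form $\bar c=(L\alpha_{max}+\beta_{max})(1+\tau/2)$, $\bar d=\beta_{max}/(2\alpha_{min})$. For \eqref{crucialine1}, however, you take a genuinely different route. The paper obtains it by passing through the $\epsilon$-subdifferential reformulation \eqref{inexact_crit3} of the inexactness criterion: it extracts an error vector $e$ with $\frac{1}{2\alpha}\|e\|^2\le\epsilon$, applies the $\epsilon$-subgradient inequality of $f_1$ at $\tilde y$ evaluated at $\hat y$, adds the Descent Lemma written between $\tilde y$ and $\hat y$, and then controls the resulting inner products by Cauchy--Schwarz, the Lipschitz continuity of $\nabla f_0$, and \eqref{dist11}--\eqref{dist33}. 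You instead treat the definition \eqref{hsigma} as an exact identity for $f_1$, evaluate it at $\hat y$ and $\tilde y$ with the same base point $(x,s)$, and combine with the two-sided quadratic bounds on $f_0$ at base $x$ (legitimate here, since $\nabla f_0$ is $L$-Lipschitz on the convex set $\dom(f_1)$, which contains $x$, $\tilde y$ and $\hat y$), so that the $\nabla f_0(x)$ terms cancel; the inexactness criterion then enters only in its primal form \eqref{inexact_crit1} via $h(\hat y;x,s)-h(\tilde y;x,s)\ge\frac{\tau}{2}h(\tilde y;x,s)$, and the remaining terms are absorbed exactly as you describe — including the correct positive-part treatment of the sign-indefinite coefficient $\frac{L}{2}-\frac{1}{2\alpha}$ — using Young and \eqref{dist11}--\eqref{dist33}. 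What your route buys is the complete avoidance of the $\epsilon$-subdifferential inclusion and the error-vector bookkeeping (the paper's bounds \eqref{ine5}--\eqref{ine8}), at the price of constants that differ numerically from the paper's but have the same qualitative dependence; note that, like the paper's constants, yours also depend on $L$, a dependence the lemma statement leaves implicit but which is harmless since $L$ is fixed by Assumption \ref{assiii}.
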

\begin{proof}
From the Descent Lemma \cite[Proposition A.24]{Bertsekas-1999} we have
\begin{equation}\label{desclemma}
f_0(\hy)\geq f_0(\ty) - \langle \nabla f_0(\hy),\ty-\hy\rangle -\frac L 2 \|\ty-\hy\|^2.
\end{equation}
The inclusion $0\in \partial_\epsilon h(\ty;x,s)$
in \eqref{inexact_crit3} implies that there exists a vector $e\in \R^n$ with
\begin{equation}\label{def:ytilde3}
 \frac 1{2\alpha} \|e\|^2 \leq \epsilon 
\end{equation}
such that
\begin{equation}\nonumber
 -\frac 1 \alpha (\ty-x+\alpha \nabla f_0(x) -\beta(x-s)+ e)\in\partial_{\epsilon}f_1(\ty)
\end{equation}
(see \cite{Bonettini-Loris-Porta-Prato-Rebegoldi-2017} and references therein).
The definition of $\epsilon$-subdifferential implies
\begin{equation}\label{convexf1}
f_1(\hy)\geq f_1(\ty) -\frac 1 \alpha\langle \ty-x,\hy-\ty\rangle+\frac \beta \alpha\langle \hy-\ty,x-s\rangle-\langle\nabla f_0(x),\hy-\ty\rangle- \frac 1 \alpha \langle e,\hy-\ty\rangle - \epsilon.
\end{equation}
Summing inequalities \eqref{desclemma} and \eqref{convexf1} yields
\begin{eqnarray}
f(\hy)&\geq& f(\ty)-\langle \nabla f_0(x)- \nabla f_0(\hy),\hy-\ty\rangle +\frac \beta \alpha\langle \hy-\ty,x-s\rangle\nonumber\\
& & -\frac 1 \alpha \langle \ty-x,\hy-\ty\rangle- \frac 1 \alpha \langle e,\hy-\ty\rangle-\frac L 2 \|\ty-\hy\|^2 -\epsilon.\label{preine7}
\end{eqnarray}
Now we consider each term at the right-hand-side in the above inequality so as to obtain a lower bound. Using the Cauchy-Schwarz inequality, Assumption \ref{assiii}, \eqref{dist11} and \eqref{dist22} we obtain 
\begin{eqnarray}
\langle \nabla f_0(x)- \nabla f_0(\hy),\hy-\ty\rangle &\leq & \|\nabla f_0(x)- \nabla f_0(\hy)\|\|\hy-\ty\|\nonumber\\
&\leq& L\amax \sqrt{2\tau\left(1+\frac{\tau}{2}\right)}(-h(\ty;x,s)).  \label{ine5}
\end{eqnarray}
Similarly, using again the Cauchy-Schwarz inequality, \eqref{dist22} and \eqref{dist33}, we can write
\begin{eqnarray}
\frac 1 \alpha \langle \ty-x,\hy-\ty\rangle
&\leq& \frac 1{\amin}\|\ty-x\|\|\hy-\ty\|
\leq \frac{\amax}{\amin}\sqrt{\frac{2\tau}\theta}(-h(\ty;x,s)).\label{ine6}
\end{eqnarray}
Moreover, from \eqref{def:ytilde3} and \eqref{inexact_crit3} we obtain $\|e\|\leq \sqrt{2\amax\epsilon}\leq \sqrt{\amax\tau (-h(\ty;x,s))}$ which, using also \eqref{dist22}, yields
\begin{eqnarray}
\frac 1\alpha \langle e,\hy-\ty\rangle &\leq& \frac{1}{\alpha} \|e\|\|\hy-\ty\|
\leq \frac {\amax\tau} {\amin}(-h(\ty;x,s)).\label{ine7}
\end{eqnarray}
Finally, using \eqref{dist22}, we can also write
\begin{equation}
\frac\beta\alpha\langle \hy-\ty,x-s\rangle  \geq  - \frac{\beta}{2\alpha}(\|\hy-\ty\|^2 + \|x-s\|^2)\geq -\frac{\beta_{max}}{2\alpha_{min}}\left(-\alpha_{max}\tau h(\ty;x,s) + \|x-s\|^2\right).\label{ine8}
\end{equation}
Combining \eqref{preine7} with \eqref{dist33}, \eqref{ine5}, \eqref{ine6}, \eqref{ine7}, \eqref{ine8} and \eqref{inexact_crit3}, 
gives \eqref{crucialine1} with
\begin{equation*}
c = L\amax \sqrt{2\tau\left(1+\frac{\tau}{2}\right)}+\frac{\amax}{\amin}\sqrt{\frac{2\tau}\theta}+\frac {\amax\tau} {\amin}+\frac{L \amax\tau}{2}+\frac{\tau}{2}+\frac{\beta_{max}\amax\tau}{2\alpha_{min}}, \ \ d = \frac{\beta_{max}}{2\alpha_{min}}.
\end{equation*}
As for \eqref{inef2}, using the Descent Lemma we obtain
\begin{equation*}
f_0( y) \leq f_0(x) + \langle\nabla f_0(x),y - x\rangle + \frac L 2 \|y - x\|^2,
\end{equation*}
for all $x, y\in\dom(f_1)$.
Summing $f_1(y)$ on both sides yields
\begin{eqnarray*}
f(y) &\leq& f(x) + f_1(y)-f_1(x) +  \langle \nabla f_0(x),y - x\rangle + \frac L 2 \|y - x\|^2\\
    & \leq & f(x) + h(y;x,s) + \frac L 2 \|y - x\|^2+ \frac\beta\alpha\langle x-s,y - x\rangle\\
		&\leq& f(x) + h(y;x,s) + \frac L 2 \|y - x\|^2+ \frac\beta{2\alpha}(\|x-s\|^2 + \|y-x\|^2).
\end{eqnarray*}
From the previous inequality with $y=\hy$, recalling that $ h(\hy;x,s)\leq 0$ and combining with \eqref{dist11} yields \eqref{inef2}, where the constants are set as $\bar c = (L\amax+\beta_{max}) (1+{\tau}/{2})$, $\bar d = {\beta_{max}}/{(2\amin)}$.
\end{proof}

\subsection{Convergence analysis of \iPiano{}}\label{sec:conviPiano}

Our aim now is to frame \iPiano{} in the abstract scheme defined by Conditions \ref{definition:abstract} to enjoy the favourable convergence guarantees that are provided by Theorem \ref{thm:convergence}. \silviacorr{The line of the proof developed in this section is based on an extension of the arguments in \cite{Ochs-2019}.} We start the convergence analysis by showing that \iPiano{} is well-posed and that its parameters satisfy some useful relations.
\begin{lemma} 
The loop between {\textsc{STEP 2}} and {\textsc{STEP 6}} terminates in a finite number of steps. In particular, there exists $\overline{L}>0$ such that $L_k\leq \overline{L}$, $\forall \ k\geq 0$. Moreover, we have
\begin{equation}\label{betabound}
0\leq\beta_k\leq \frac{1+\theta\omega}{2}, \ \ \forall \ k\geq 0
\end{equation} 
and there exist two positive constants $\alpha_{min},\alpha_{max}$ with $0<\alpha_{min}\leq \alpha_{max}$ such that
$\alpha_k\in [\alpha_{min},\alpha_{max}] $, $\forall k\geq 0$. We also have
\begin{eqnarray}
\frac{1+\theta\omega}{2\alpha_k}-\frac{L_k}{2}-\frac{\beta_k}{2\alpha_k}&=&\delta\label{eq:relation}\\
\delta-\frac{\beta_k}{2\alpha_k}&=&\gamma.\label{eq:deltac}
\end{eqnarray}
\end{lemma}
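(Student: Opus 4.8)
The statement collects four separate facts about the parameters of \iPiano{}, and the plan is to verify them essentially in the order they are stated, since each one feeds into the next. First I would prove the \emph{finite termination of the backtracking loop between} \textsc{Step 2} \emph{and} \textsc{Step 6}. The inner loop multiplies $L_k$ by $\eta>1$ until the local descent inequality \eqref{eq:descent_inexact} holds at $\tyk$; but by the global Descent Lemma \cite[Proposition A.24]{Bertsekas-1999}, \eqref{eq:descent_inexact} is automatically satisfied as soon as $L_k\geq L$, the global Lipschitz constant of $\nabla f_0$ on $\dom(f_1)$ (Assumption \ref{assiii}), using that both $\xk$ and $\tyk$ lie in $\dom(f_1)$. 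Hence the loop stops after at most a fixed number of doublings, and the accepted value satisfies $L_k\leq \max\{L_{max},\eta L\}=:\overline L$. This gives the uniform bound on $\{L_k\}$.

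Next I would establish the bounds on $\beta_k$ and $\alpha_k$. From \textsc{Step 2}, $b_k=(L_k+2\delta)/(L_k+2\gamma)$, and since $\delta\geq\gamma>0$ we get $b_k\geq 1$, with $b_k\leq (L_{min}+2\delta)/(L_{min}+2\gamma)=:\bar b$ because the map $L\mapsto (L+2\delta)/(L+2\gamma)$ is nonincreasing in $L\geq 0$ (its derivative has the sign of $\gamma-\delta\leq0$) and $L_k\geq L_{min}$. Then $\beta_k=\frac{1+\theta\omega}{2}\cdot\frac{b_k-1}{b_k-1/2}$; the factor $(b_k-1)/(b_k-1/2)$ is in $[0,1)$ for $b_k\in[1,\infty)$, which immediately yields $0\le\beta_k\le\frac{1+\theta\omega}{2}$, i.e. \eqref{betabound}. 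For $\alpha_k=\frac{1+\theta\omega-2\beta_k}{L_k+2\gamma}$, I note the numerator equals $(1+\theta\omega)\bigl(1-\frac{b_k-1}{b_k-1/2}\bigr)=(1+\theta\omega)\cdot\frac{1/2}{b_k-1/2}=\frac{1+\theta\omega}{2b_k-1}>0$, which is bounded away from $0$ and above since $b_k\in[1,\bar b]$, and $L_k+2\gamma\in[L_{min}+2\gamma,\overline L+2\gamma]$; dividing gives $\alpha_k\in[\alpha_{min},\alpha_{max}]$ for explicit positive constants.

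Finally I would verify the two identities \eqref{eq:relation} and \eqref{eq:deltac}, which are just algebraic substitutions. For \eqref{eq:deltac}: $\frac{\beta_k}{2\alpha_k}=\frac{\beta_k(L_k+2\gamma)}{2(1+\theta\omega-2\beta_k)}$; plugging in $1+\theta\omega-2\beta_k=\frac{1+\theta\omega}{2b_k-1}$ and $\beta_k=\frac{1+\theta\omega}{2}\cdot\frac{b_k-1}{b_k-1/2}=\frac{(1+\theta\omega)(b_k-1)}{2b_k-1}$, the $(1+\theta\omega)$ and $(2b_k-1)$ factors cancel and one is left with $\frac{\beta_k}{2\alpha_k}=(b_k-1)(L_k+2\gamma)/2$; then using $b_k-1=(2\delta-2\gamma)/(L_k+2\gamma)$ from the definition of $b_k$ gives $\frac{\beta_k}{2\alpha_k}=\delta-\gamma$, which is exactly \eqref{eq:deltac}. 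Identity \eqref{eq:relation} then follows from \eqref{eq:deltac} by adding $\frac{L_k}{2}+\frac{\beta_k}{\alpha_k}$ to both sides and checking that $\frac{1+\theta\omega}{2\alpha_k}=\frac{L_k}{2}+\frac{\beta_k}{\alpha_k}+\delta$, which reduces, via $\alpha_k(L_k+2\gamma)=1+\theta\omega-2\beta_k$, to the same cancellation as above. I expect the only genuinely non-mechanical step to be the first one — arguing finite termination of the backtracking loop by invoking the Descent Lemma and the fact that the iterates stay in $\dom(f_1)$ where $\nabla f_0$ is globally Lipschitz; the remaining parts are bookkeeping with monotone rational functions of $L_k$ and straightforward algebra.
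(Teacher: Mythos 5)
Your proposal is correct and follows essentially the same route as the paper's proof: finite termination of the loop via the Descent Lemma once $L_k\geq L$ (hence $L_k\leq \overline{L}=\max\{L_{max},\eta L\}$), the bound \eqref{betabound} from $b_k\geq 1$, interval bounds on $\alpha_k$ from an algebraic rewriting of its definition, and direct algebra for \eqref{eq:relation}--\eqref{eq:deltac}. One small slip: the intermediate identity you propose to check should read $\frac{1+\theta\omega}{2\alpha_k}=\frac{L_k}{2}+\frac{\beta_k}{\alpha_k}+\gamma$ (equivalently $\frac{L_k}{2}+\frac{\beta_k}{2\alpha_k}+\delta$), not $\frac{L_k}{2}+\frac{\beta_k}{\alpha_k}+\delta$; the correct version is exactly the relation $\alpha_k(L_k+2\gamma)=1+\theta\omega-2\beta_k$ divided by $2\alpha_k$, so, combined with your (correct) computation $\frac{\beta_k}{2\alpha_k}=\delta-\gamma$, the argument goes through unchanged.
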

\begin{proof}
Since $\eta>1$, after a finite number of steps the tentative value of $L_k$ satisfies $L_k\geq L$, where $L$ is the Lipschitz constant of $\nabla f_0$. Then, from the Descent Lemma, the inequality at {\textsc{Step 6}} is satisfied. From $\delta\geq \gamma$ we have $b_k\geq 1$, which implies \eqref{betabound}. A simple inspection shows that the following equalities hold:
\begin{equation}\nonumber
b_k=\frac{1+\theta\omega-{\beta_k}}{1+\theta\omega-2\beta_k}\Rightarrow 
\frac{L_k+2\delta}{L_k+2\gamma}=\frac{1+\theta\omega-{\beta_k}}{1+\theta\omega-2\beta_k},
\end{equation}
which leads to rewriting the parameter $\alpha_k$ as
\begin{equation}\label{eq:alter_alpha}
\alpha_k=\frac{1+\theta\omega-\beta_k}{L_k+2\delta}.
\end{equation}
Then there holds $\alpha_k\geq\alpha_{min}$ with $\alpha_{min} = (1+\theta\omega)/(2(\overline{L} + 2\delta))$ and, since $\theta\omega\leq 1$, we also have $\alpha_k\leq \alpha_{max}$ with $\alpha_{max} = 2/L_{min} $. 
Moreover, we have
\begin{equation}\nonumber
\frac{1+\theta\omega}{\alpha_k}-\frac{L_k}{2}-\frac{\beta_k}{2\alpha_k}=\frac{1+\theta\omega-\beta_k}{2\alpha_k}-\frac{L_k}{2}=\frac{L_k+2\delta}{2}-\frac{L_k}{2}=\delta
\end{equation}
and 
\begin{equation}\nonumber
\delta-\frac{\beta_k}{2\alpha_k}=\frac{1+\theta\omega}{2\alpha_k}-\frac{L_k}{2}-\frac{\beta_k}{\alpha_k}=\frac{1+\theta\omega-2\beta_k}{2\alpha_k}-\frac{L_k}{2}=\frac{L_k+2\gamma}{2}-\frac{L_k}{2}=\gamma.
\end{equation}
\end{proof}
Notice that the case $\tau=0$, which corresponds to the exact computation of the inertial proximal gradient point at \textsc{Step 5}, implies $\theta = 1$ and, choosing $\omega=1$, the parameters settings in \iPiano{} are exactly the same as in \cite{Ochs-etal-2014}. The need of introducing the parameter $\omega$ is mainly technical and will be explained in the following.\\ 
We now prove that condition \ref{H1} holds for \iPiano{} when the corresponding { merit} function $\Phi$ is defined as follows:
\begin{align}
&\Phi:\R^n\times \R^n \rightarrow \bR,\ \ \ \ \Phi(x,s) = f(x) + \delta\|x-s\|^2.\label{eq:Phi}
\end{align}

\begin{proposition}\label{prop:fund1}
Let $\{x^{(k)}\}_{k\in\mathbb{N}}$ be the sequence generated by \iPiano{}. Then there holds
\begin{equation}\label{H1iPiano_inexact}
\Phi(x^{(k+1)},x^{(k)}) \leq \Phi(x^{(k)},x^{(k-1)})  -\gamma\|\xk-\xkm\|^2+(1-\omega) h\k(\xkk;\xk,\xkm). 
\end{equation}
Consequently, there exist $\{s^{(k)}\}_{k\in\mathbb{N}}$, $\{d_k\}_{k\in\mathbb{N}}$, $\{a_k\}_{k\in\mathbb{N}}$ such that \ref{H1} holds with $\Phi$ as in \eqref{eq:Phi}.
\end{proposition}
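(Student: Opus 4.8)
The plan is to first establish the one-step estimate \eqref{H1iPiano_inexact} and then read off \ref{H1} from it by choosing the auxiliary sequences appropriately. The core idea is to bound $f(\xkk)$ from above and then add $\delta\|\xkk-\xk\|^2$ to both sides in order to reconstruct the merit function $\Phi$ of \eqref{eq:Phi}. Concretely, I would combine the local descent inequality \eqref{eq:descent_inexact}, which holds for the accepted iterate $\xkk=\tyk$ by \textsc{Step 6} of \iPiano{}, with the identity obtained by solving the definition \eqref{hsigma} of $h^{(k)}(\xkk;\xk,\xkm)$ for $f_1(\xkk)$. Summing the two relations, the terms $\langle\nabla f_0(\xk),\xkk-\xk\rangle$ cancel and one is left with
\[
f(\xkk)\le f(\xk)+h^{(k)}(\xkk;\xk,\xkm)+\Bigl(\tfrac{L_k}{2}-\tfrac{1}{2\alpha_k}\Bigr)\|\xkk-\xk\|^2+\tfrac{\beta_k}{\alpha_k}\langle \xk-\xkm,\xkk-\xk\rangle .
\]

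Next I would bound the cross term by Young's inequality, $\tfrac{\beta_k}{\alpha_k}\langle a,b\rangle\le\tfrac{\beta_k}{2\alpha_k}(\|a\|^2+\|b\|^2)$, add $\delta\|\xkk-\xk\|^2$ to both sides and rewrite $f(\xk)=\Phi(\xk,\xkm)-\delta\|\xk-\xkm\|^2$. The coefficient of $\|\xk-\xkm\|^2$ that appears is $\tfrac{\beta_k}{2\alpha_k}-\delta$, which equals $-\gamma$ by \eqref{eq:deltac}, while the coefficient of $\|\xkk-\xk\|^2$ is $\tfrac{L_k}{2}-\tfrac{1}{2\alpha_k}+\tfrac{\beta_k}{2\alpha_k}+\delta$, which collapses to $\tfrac{\theta\omega}{2\alpha_k}$ after inserting $\tfrac{L_k}{2}=\tfrac{1+\theta\omega}{2\alpha_k}-\tfrac{\beta_k}{2\alpha_k}-\delta$ from \eqref{eq:relation}. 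This gives
\[
\Phi(\xkk,\xk)\le \Phi(\xk,\xkm)+h^{(k)}(\xkk;\xk,\xkm)+\tfrac{\theta\omega}{2\alpha_k}\|\xkk-\xk\|^2-\gamma\|\xk-\xkm\|^2 .
\]
The remaining nonstandard term is then controlled with the strong-convexity estimate \eqref{dist33}, applied with $\tilde y=\xkk$, $x=\xk$, $s=\xkm$, $\alpha=\alpha_k$: multiplying $\tfrac{\theta}{2\alpha_k}\|\xkk-\xk\|^2\le -h^{(k)}(\xkk;\xk,\xkm)$ by $\omega\ge0$ and substituting yields exactly \eqref{H1iPiano_inexact}.

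For the ``consequently'' part I would set $s^{(k)}=\xkm$ for all $k$, $a_k\equiv 1$, and $d_k^2=\gamma\|\xk-\xkm\|^2-(1-\omega)h^{(k)}(\xkk;\xk,\xkm)$. The radicand is nonnegative because $h^{(k)}(\xkk;\xk,\xkm)\le0$ — this is \eqref{inexact_crit2} with $\tyk=\xkk$ — and $\omega\le1$, so $d_k\ge0$ is well defined and $a_k>0$. With these choices the inequality \eqref{H1iPiano_inexact} rearranges to $\Phi(\xkk,s^{(k+1)})+a_kd_k^2\le\Phi(\xk,s^{(k)})$, which is \ref{H1}.

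The bulk of the argument is careful coefficient bookkeeping; the one genuinely delicate point is that the surplus term $\tfrac{\theta\omega}{2\alpha_k}\|\xkk-\xk\|^2$ — an artifact of the inexact minimization of $h^{(k)}$ — must be reabsorbed into $-\omega h^{(k)}(\xkk;\xk,\xkm)$ via \eqref{dist33}. This is precisely why the auxiliary parameter $\omega$, together with the constant $\theta=2/(\sqrt{2+\tau}+\sqrt{\tau})^2\le1$ fixed in the algorithm header, is introduced, and it forces the constraint $\theta\omega\le1$ used elsewhere. One should also note that the sign condition $h^{(k)}(\xkk;\xk,\xkm)\le0$ is invoked twice: to legitimize the bound via \eqref{dist33} and to guarantee $d_k^2\ge0$.
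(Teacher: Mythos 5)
Your proposal is correct and follows essentially the same route as the paper's proof: both combine the descent check of \textsc{Step 6} with the definition \eqref{hsigma} of $h^{(k)}$, bound the cross term by Young's inequality, absorb the surplus quadratic term into $-\omega h^{(k)}$ via \eqref{dist33}, and invoke \eqref{eq:relation} and \eqref{eq:deltac} to identify the coefficients $\delta$ and $-\gamma$. The final choices $s^{(k)}=x^{(k-1)}$, $a_k\equiv 1$, $d_k^2=\gamma\|x^{(k)}-x^{(k-1)}\|^2-(1-\omega)h^{(k)}(x^{(k+1)};x^{(k)},x^{(k-1)})$ coincide with those in the paper, so only the order of the bookkeeping differs.
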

\begin{proof}
By summing the quantity $f_1(\xkk)$ to both sides of inequality \eqref{eq:descent_inexact} we obtain
\begin{eqnarray}
f(\xkk) &\leq & f(\xk) \!+\! f_1(\xkk)-f_1(\xk) \!+\! \langle \nabla f_0(\xk),\xkk-\xk\rangle \!+\! \frac{L_k}{2}\|\xkk-\xk\|^2\nonumber\\
&=& f(\xk) + \hk(\xkk;\xk,\xkm) - \left(\frac 1{2\ak}-\frac{L_k}2\right)\|\xkk-\xk\|^2 \nonumber\\
& &+ \frac{\beta_k}{\alpha_k}\langle\xkk-\xk,\xk-\xkm\rangle\nonumber\\
&\leq & f(\xk) + h\k(\xkk;\xk,\xkm) - \left(\frac 1{2\ak}-\frac{L_k}2\right)\|\xkk-\xk\|^2\nonumber\\
& &  + \frac{\beta_k}{2\alpha_k}(\|\xkk-\xk\|^2+\|\xk-\xkm\|^2)\nonumber\\
&\leq & f(\xk)+(1-\omega)\hk(\xkk;\xk,\xkm) -\frac{\theta\omega}{2\alpha_k}\|\xkk-\xk\|^2 \nonumber\\
& &  - \left(\frac 1{2\ak}-\frac{L_k}2\right)\|\xkk-\xk\|^2+ \frac{\beta_k}{2\alpha_k}(\|\xkk-\xk\|^2+\|\xk-\xkm\|^2)\nonumber\\
&=& f(\xk) - \left(\frac {1+\theta\omega}{2\ak}-\frac{L_k}2-\frac{\beta_k}{2\alpha_k}\right)\|\xkk-\xk\|^2\nonumber\\
& &+(1-\omega)\hk(\xkk;\xk,\xkm) + \frac{\beta_k}{2\alpha_k}\|\xk-\xkm\|^2\nonumber
\end{eqnarray}
where the first equality is obtained by adding and subtracting to the right-hand-side the quantity $\|\xkk-\xk\|^2/(2\ak) +\beta_k/\alpha_k\langle \xkk-\xk,\xk-\xkm\rangle$, the subsequent inequality follows from the basic relation $2\langle a,b\rangle \leq \|a\|^2+\|b\|^2$ and the next one from \eqref{dist33}.\\
Recalling \eqref{eq:relation}, the above inequality can be conveniently rewritten as
\begin{align*}
f(x^{(k+1)})+\delta\|x^{(k+1)} &-x^{(k)}\|^2 \leq  f(x^{(k)})+\delta\|x^{(k)}-x^{(k-1)}\|^2\\
&\ \ +\left(\frac{\beta_k}{2\alpha_k}-\delta\right)\|x^{(k)}-x^{(k-1)}\|^2 +(1-\omega)h(\xkk;\xk,\xkm).
\end{align*}
Finally, exploiting \eqref{eq:deltac}, we obtain condition \ref{H1} with $\Phi$ given in \eqref{eq:Phi}, $a_k=1$ and
\begin{eqnarray}
s\k &= &\xkm \label{eq:sk}\\
d_k^2    &=& \gamma\|\xk-\xkm\|^2 -(1-\omega)\hk(\xkk;\xk,\xkm).\label{eq:dk}
\end{eqnarray}
\end{proof}
Under Assumption \ref{assiv}, $\Phi$ is bounded from below, hence, condition \ref{H1} implies 
\begin{equation}\label{fazzollo0}
\lim_{k\to\infty} \|x^{(k)}-x^{(k-1)}\| = 0
\end{equation}
and, if $\omega < 1$, also
\begin{equation}\label{fazzollo}
 \lim_{k\to\infty }\hk(\xkk;\xk,\xkm) = 0.
\end{equation}
The choice $\omega < 1$ is enforced when $\tau > 0$ in order to obtain \eqref{fazzollo}. If we take $\omega=1$, with the same arguments as above we still obtain \eqref{fazzollo0}. 
It is also worth noticing that, for large values of $\tau$, that is when a coarser accuracy is allowed in the computation of $\tyk$, the parameter $\theta$ can be very small and this also influences the choice of $\alpha_k$ and $\beta_k$.\\  

In order to prove condition \ref{H2}, let us now introduce the second { merit} function as follows 
\begin{equation}\label{def:F} 
{\mathcal F}:\R^n\times \R \to \bR,\ \ \ \ {\mathcal F}(u,\rho) = f(u) +\frac 1 2 \rho^2.
\end{equation}  
\def\yk{\hat x^{(k+1)}}
\def\tyk{\xkk}\noindent The following result is proved using similar arguments as the ones used in Lemma 4-5 in \cite{Bonettini-Prato-Rebegoldi-2021}.
\begin{proposition}\label{lemma:4new}
Let $\{\xk\}_\kinN$ be the sequence generated by \iPiano{} and, for each $k$, let the point $\yk$ be defined as 
\begin{equation}\label{ykdef}
\yk = \argmin_{y\in\R^n} \hk(y;\xk,\xkm).
\end{equation}
Then, there exist $\{\rho^{(k)}\}_{k\in\mathbb{N}}$, $\{r_k\}_{k\in\mathbb{N}}$ such that condition \ref{H2} holds with $\Phi$ defined in \eqref{eq:Phi}, $\mathcal{F}$ defined in \eqref{def:F}, $s^{(k)}$ defined in \eqref{eq:sk} and $u^{(k)}=\hat{x}^{(k+1)}$.
\end{proposition}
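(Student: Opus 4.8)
The plan is to exhibit explicit sequences $\{\rho\k\}_{k\in\N}$ and $\{r_k\}_{k\in\N}$ built from quantities already under control and then verify the two inequalities of \ref{H2} by a direct computation. First I would fix the correspondence between the notation of Lemma \ref{lemma:crucialine1} and the \iPiano{} iteration: $x\leftrightarrow\xk$, $s\leftrightarrow\xkm=s\k$, $\ty\leftrightarrow\xkk$ (the inexact point of \textsc{Step 5}), $\hy\leftrightarrow\hat x^{(k+1)}=u\k$ (the exact minimizer \eqref{ykdef}). Since $\alpha_k\in[\alpha_{min},\alpha_{max}]$ and $0\le\beta_k\le 1$, Lemma \ref{lemma:crucialine1} and the estimate \eqref{dist33} apply at every $k$, and \eqref{inexact_crit2} gives $\bar h_k:=h\k(\xkk;\xk,\xkm)\le 0$; write also $e_k:=\|\xk-\xkm\|$.

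I would dispose first of the exact case $\tau=0$, in which $\xkk=\hat x^{(k+1)}$. Setting $\rho\k:=\sqrt{2\delta}\,\|\xkk-\xk\|$ and $r_k:=0$ one gets $\F(u\k,\rho\k)=f(\xkk)+\delta\|\xkk-\xk\|^2=\Phi(\xkk,\xk)$, so the left inequality in \ref{H2} holds with equality; the right inequality $\Phi(\xkk,\xk)\le\Phi(\xk,\xkm)$ is \eqref{H1iPiano_inexact} after dropping the nonpositive terms $-\gamma e_k^2$ and $(1-\omega)\bar h_k$; finally $\rho\k\to0$ by \eqref{fazzollo0}.

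For $\tau>0$ (hence $\omega<1$) I would set, with $c,d,\bar c,\bar d$ the constants of Lemma \ref{lemma:crucialine1},
\[
\rho\k:=\Big(2\Big(-\Big(c+\tfrac{2\delta\alpha_k}{\theta}\Big)\bar h_k+d\,e_k^2\Big)\Big)^{1/2},\qquad
r_k:=-\Big(c+\bar c+\tfrac{2\delta\alpha_{max}}{\theta}\Big)\bar h_k+\max\{0,\,d+\bar d-\delta\}\,e_k^2 .
\]
Both are well defined and nonnegative because $-\bar h_k\ge0$ and $c,d\ge0$. The left inequality in \ref{H2} is equivalent to $f(\xkk)-f(\hat x^{(k+1)})+\delta\|\xkk-\xk\|^2\le\tfrac12(\rho\k)^2$, which follows by adding the bound $f(\xkk)-f(\hat x^{(k+1)})\le-c\bar h_k+d\,e_k^2$ from \eqref{crucialine1} to the bound $\delta\|\xkk-\xk\|^2\le-\tfrac{2\delta\alpha_k}{\theta}\bar h_k$ from \eqref{dist33}. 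For the right inequality I would plug $f(\hat x^{(k+1)})\le f(\xk)-\bar c\bar h_k+\bar d\,e_k^2$ from \eqref{inef2} into $\F(u\k,\rho\k)=f(\hat x^{(k+1)})+\tfrac12(\rho\k)^2$, use $\alpha_k\le\alpha_{max}$ to replace $\tfrac{2\delta\alpha_k}{\theta}$ by $\tfrac{2\delta\alpha_{max}}{\theta}$, and collect terms against $\Phi(\xk,\xkm)=f(\xk)+\delta\,e_k^2$; what remains is bounded above by $r_k$ precisely because of the $\max\{0,d+\bar d-\delta\}$ coefficient. Since $c,\bar c,d,\bar d,\delta,\alpha_{max},\theta$ are constants, $r_k\to0$ then follows from $e_k\to0$ \eqref{fazzollo0} and $\bar h_k\to0$ \eqref{fazzollo}, the latter being exactly why $\omega<1$ is imposed when $\tau>0$.

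The argument is essentially sign-chasing; the point that needs care is the handling of the perturbation terms. The quantity $\delta\|\xkk-\xk\|^2$ appears in $\Phi(\xkk,\xk)$ but not in $\F(u\k,\rho\k)$, so it must be absorbed into $(\rho\k)^2$ via \eqref{dist33}; conversely the slack $\delta e_k^2$ contained in $\Phi(\xk,\xkm)$ must be played against the $\bar d\,e_k^2$ produced by \eqref{inef2}, which forces the $\max\{0,d+\bar d-\delta\}$ coefficient in $r_k$ in order to keep $r_k\ge0$. I expect no conceptual difficulty beyond making the constants of Lemma \ref{lemma:crucialine1} consistent so that $r_k\ge0$ and $r_k\to0$ hold simultaneously; the case $\tau=0$ must be treated apart only because \eqref{fazzollo} need not hold there (since $\omega$ may equal $1$), but then $\xkk=\hat x^{(k+1)}$ makes everything trivial.
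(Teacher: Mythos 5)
Your proposal is correct and follows essentially the same route as the paper: it invokes \eqref{crucialine1}, \eqref{inef2} and \eqref{dist33} to build explicit $\rho^{(k)}$ and $r_k$, treats the exact case ($\tau=0$, resp.\ $\omega=1$ in the paper) trivially via \ref{H1}, and gets $r_k\to0$ from \eqref{fazzollo0}--\eqref{fazzollo}. The only (harmless) cosmetic difference is that you absorb $\delta\|\xkk-\xk\|^2$ into $(\rho^{(k)})^2$ through \eqref{dist33} and exploit the slack $\delta\|\xk-\xkm\|^2$ via the $\max\{0,d+\bar d-\delta\}$ coefficient, whereas the paper keeps these terms explicitly inside its $\rho^{(k)}$ and $r_k$.
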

\begin{proof}
When $\omega=1$ we necessarily have $\tau = 0$, which means $\tyk = \hat{x}^{(k+1)}$. Therefore, \ref{H2} direcly follows from \ref{H1} with $u\k = \xkk$, $\rho^{(k)} = \sqrt{2\delta} \|\xk-\xkm\|$, $r_k=0$. Consider now the case $\omega <1$. From \eqref{crucialine1} and \eqref{inef2}, we directly obtain
\begin{eqnarray*}
f(\yk)&\geq& f(\tyk) + c\hk(\xkk;\xk,\xkm) - d\|\xk-\xkm\|^2\\
f(\yk)&\leq& f(\xk) - \bar c h(\tyk;\xk,\xkm)+ \bar d\|\xk-\xkm\|^2,
\end{eqnarray*}
where $c,d,\bar c,\bar d$ are defined as in Lemma \ref{lemma:crucialine1} and do not depend on $k$. 
Combining the two inequalities above we obtain
\begin{align*}
f(&\tyk) + \delta \|\xkk-\xk\|^2\leq \\
&\leq f(\yk) + \delta \|\xkk-\xk\|^2 { -c\hk(\xkk;\xk,\xkm)} + d\|\xk-\xkm\|^2\\
&\leq  f(\xk) - (c+\bar c) h(\tyk;\xk,\xkm)+ (d+\bar d)\|\xk-\xkm\|^2 + \delta  \|\xkk-\xk\|^2 .
\end{align*}
Recalling the definition of $\mathcal{F}$ in \eqref{def:F}, the above inequalities can be rewritten as
\begin{equation}\label{H21ok}
f(\tyk) + \delta \|\xkk-\xk\|^2\leq {\mathcal F}(\yk,\rho\k) \leq f(\xk) + \delta\|\xk-\xkm\|^2 + r_k
\end{equation}
where $\rho^{(k)}, r_k$ are given by
\begin{eqnarray}
\rho\k&=& \sqrt{2}(\delta\|\xkk-\xk\|^2 { -c\hk(\xkk;\xk,\xkm)} + d\|\xk-\xkm\|^2	)^{\frac 1 2} \nonumber\\
& &\label{sigmadef}\\
r_k &=& - (c+\bar c) h(\tyk;\xk,\xkm)+ (d+\bar d)\|\xk-\xkm\|^2+ \delta  \|\xkk-\xk\|^2.\nonumber
\end{eqnarray}
From \eqref{fazzollo0}--\eqref{fazzollo} we obtain $\displaystyle\lim_{k\to\infty} r_k = 0$, hence, assumption \ref{H2} is satisfied with the above settings and with $u\k = \yk$.
\end{proof}
Next we show that condition \ref{H3} holds for \iPiano{}. The following result combines elements of Lemma 3 in \cite{Bonettini-Prato-Rebegoldi-2021} and Lemma 17 in \cite{Ochs-2019}.
\begin{proposition}\label{lemma:3new}
There exist $b>0$, $I\subset \mathbb{Z}$, $\{\theta_i\}_{i\in I}$ such that condition \ref{H3} holds with $\{u^{(k)}\}_{k\in\mathbb{N}},\{\rho^{(k)}\}_{k\in\mathbb{N}},\{d_k\}_{k\in\mathbb{N}}$ defined as in Proposition \ref{lemma:4new} and in \eqref{eq:dk}, $\zeta_{k+1}=0$ and $b_{k+1}=1$.  
\end{proposition}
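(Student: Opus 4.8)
The plan is to bound the lazy slope $\|\partial\mathcal{F}(\hat x^{(k+1)},\rho^{(k)})\|_-$ by a fixed multiple of $d_k+d_{k+1}$, by combining the subgradient bound \eqref{ine442}--\eqref{ine44} with the distance estimates \eqref{dist11}--\eqref{dist33} and the explicit expressions \eqref{eq:dk}, \eqref{sigmadef} for $d_k$ and $\rho^{(k)}$. First I would record the structure of the subdifferential of $\mathcal{F}$: since $\mathcal{F}(u,\rho)=f(u)+\tfrac12\rho^2$ is the sum of a proper l.s.c.\ function of $u$ (constant in $\rho$) and a continuously differentiable function of $\rho$ (constant in $u$), the subdifferential sum rule \cite[Exercise 8.8(c)]{Rockafellar-Wets-1998} gives $\partial\mathcal{F}(u,\rho)=\partial f(u)\times\{\rho\}$, and hence
\begin{equation*}
\|\partial\mathcal{F}(u,\rho)\|_- = \sqrt{\|\partial f(u)\|_-^2+\rho^2}\leq \|\partial f(u)\|_- + |\rho|.
\end{equation*}
Recalling that $u^{(k)}=\hat x^{(k+1)}=\argmin_y h^{(k)}(y;x^{(k)},x^{(k-1)})$ and that $\tilde y^{(k)}=x^{(k+1)}$ in \iPiano{}, it then suffices to estimate $\|\partial f(\hat x^{(k+1)})\|_-$ and $\rho^{(k)}$ separately.

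For the subgradient term I would invoke the Lemma producing \eqref{ine442}--\eqref{ine44} at the pair $(x,s)=(x^{(k)},x^{(k-1)})$, so that $\hat y=\hat x^{(k+1)}$ and $\tilde y=x^{(k+1)}$: there exists $\hat v^{(k)}\in\partial f(\hat x^{(k+1)})$ with
\begin{equation*}
\|\partial f(\hat x^{(k+1)})\|_- \leq \|\hat v^{(k)}\| \leq p\big(\|\hat x^{(k+1)}-x^{(k)}\|+\|x^{(k)}-x^{(k-1)}\|\big),
\end{equation*}
and also $\|\hat v^{(k)}\|\leq q\big(\sqrt{-h^{(k)}(x^{(k+1)};x^{(k)},x^{(k-1)})}+\|x^{(k)}-x^{(k-1)}\|\big)$. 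Now I distinguish the two regimes allowed by the algorithm. If $\omega<1$, then from \eqref{eq:dk} both nonnegative summands are dominated by $d_k^2$, i.e.\ $\|x^{(k)}-x^{(k-1)}\|\leq d_k/\sqrt{\gamma}$ and $-h^{(k)}(x^{(k+1)};x^{(k)},x^{(k-1)})\leq d_k^2/(1-\omega)$; feeding the latter through \eqref{dist11} also controls $\|\hat x^{(k+1)}-x^{(k)}\|$ by a multiple of $d_k$, so that $\|\partial f(\hat x^{(k+1)})\|_-\leq C_1 d_k$. In the same case, each term inside \eqref{sigmadef} is a multiple of $d_k^2$ — $\delta\|x^{(k+1)}-x^{(k)}\|^2$ via \eqref{dist33}, and $-c\,h^{(k)}(x^{(k+1)};\cdot)$, $d\|x^{(k)}-x^{(k-1)}\|^2$ via \eqref{eq:dk} — giving $\rho^{(k)}\leq C_2 d_k$. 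If instead $\omega=1$ (forcing $\tau=0$, hence $\hat x^{(k+1)}=x^{(k+1)}$, $\rho^{(k)}=\sqrt{2\delta}\,\|x^{(k)}-x^{(k-1)}\|$, $r_k=0$), then \eqref{eq:dk} reduces to $d_k^2=\gamma\|x^{(k)}-x^{(k-1)}\|^2$, whence $\|x^{(k)}-x^{(k-1)}\|=d_k/\sqrt{\gamma}$ and $\|x^{(k+1)}-x^{(k)}\|=d_{k+1}/\sqrt{\gamma}$; using the bound in terms of $p$, $\|\partial f(x^{(k+1)})\|_-\leq (p/\sqrt{\gamma})(d_{k+1}+d_k)$ and $\rho^{(k)}=\sqrt{2\delta/\gamma}\,d_k$.

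In either regime this yields $\|\partial\mathcal{F}(\hat x^{(k+1)},\rho^{(k)})\|_-\leq C\,(d_k+d_{k+1})$ with $C>0$ depending only on $\alpha_{min},\alpha_{max},\beta_{max},\tau,\gamma,\delta$ and $L$. Choosing $I=\{0,1\}$, $\theta_0=\theta_1=\tfrac12$, $b=2C$, $\zeta_{k+1}\equiv 0$ and $b_{k+1}\equiv 1$ then gives exactly \ref{H3}, the convention $d_j=0$ for $j\leq 0$ together with $d_0$ being a genuine finite quantity taking care of the index $k=0$. The main obstacle is the bookkeeping in the case $\omega<1$: one must check that \emph{every} quantity appearing in \eqref{ine44} and in \eqref{sigmadef} — in particular the cross terms hidden in $\rho^{(k)}$ — is genuinely squeezed by $d_k$ through the distance lemma \eqref{dist11}--\eqref{dist33}; and one must notice that the case $\omega=1$ instead has to lean on $d_{k+1}$ (because there $d_k$ no longer controls $-h^{(k)}$), which is precisely why a two–element index set $I$, rather than $I=\{1\}$, is needed to obtain a single uniform inequality.
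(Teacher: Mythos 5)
Your proposal is correct and follows essentially the paper's own argument: exploit the separable structure of $\mathcal{F}$ to bound $\|\partial\mathcal{F}(\hat x^{(k+1)},\rho^{(k)})\|_-$ by $\|\hat v^{(k)}\|+\rho^{(k)}$, estimate these via \eqref{ine442}/\eqref{ine44}, \eqref{dist11}--\eqref{dist33}, \eqref{eq:dk} and \eqref{sigmadef}, and split the cases $\omega=1$ (where $d_{k+1}$ must enter) and $\omega<1$. The only deviation is that for $\omega<1$ you bound $\|x^{(k)}-x^{(k-1)}\|$ directly by $d_k/\sqrt{\gamma}$ from \eqref{eq:dk}, whereas the paper bounds it by $d_{k-1}$ via \eqref{dist33} applied at the previous iteration (whence its choice $I=\{1,2\}$ there); this is a harmless, if anything slightly cleaner, variant that lets you keep $I=\{0,1\}$ uniformly.
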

\begin{proof}
From the separable structure of ${\mathcal F}$, if $v\in\partial f(u)$ and $\rho\in\R$, we have that $(v,\rho)\in\partial {\mathcal F}(u,\rho)$. In particular, 
\begin{equation}\label{wend}
\|\partial {\mathcal F}(u,\rho)\|_- \leq \|v\| + \lvert \rho \rvert, \mbox{ for all } v\in\partial f(u), \rho\in\R.
\end{equation}
If $\omega = 1$, we are in the case $\tau = 0$. This means that $\xkk = \yk$, $\rho\k = \sqrt{2\delta} \|\xk-\xkm\|$. Moreover, \eqref{eq:dk} reduces to $d_k = \sqrt{\gamma}\|\xk-\xkm\| = \sqrt{\gamma/(2\delta)}\rho^{(k)}$. From \eqref{ine442}, we have that there exists a subgradient $\hat v^{(k+1)}\in\partial f(\yk)$ and a positive constant $p$ such that 
\begin{eqnarray*}
\|\hat v^{(k+1)}\| &\leq& p(\|\xkk-\xk\| + \|\xk-\xkm\|) = \frac{p}{\sqrt{\gamma}}(d_{k+1} + d_{k}).
\end{eqnarray*}
Hence, $\|\hat v^{(k+1)}\|+\rho\k\leq p d_{k+1}/{\sqrt{\gamma}} + d_k(p+\sqrt{2\delta})/{\sqrt{\gamma}} $ and, recalling \eqref{wend}, \ref{H3} follows with $b = (p+\sqrt{2\delta})/{\sqrt{\gamma}}$, $I=\{0,1\}$, $\theta_0=\theta_1=\frac 1 2$.\\
Consider now the case $\omega < 1$. From \eqref{ine44}, there exists a subgradient $\hat v^{(k+1)}\in\partial f(\yk)$ and a positive constant $q$ such that
\begin{eqnarray}
\|\hat v^{(k+1)}\| &\leq& q\sqrt{-h\k(\xkk;\xk,\xkm)} + q\|\xk-\xkm\|\nonumber\\
&\leq & q\sqrt{-h\k(\xkk;\xk,\xkm)} + q\sqrt{\frac{2\alpha_{max}}{\theta}}\sqrt{-h^{(k-1)}(\xk;\xkm,x^{(k-2)})}\nonumber\\
&\leq&\frac{q}{\sqrt{1-\omega}}\sqrt{-(1-\omega)h\k(\xkk;\xk,\xkm)+\gamma\|\xk-\xkm\|^2} \nonumber\\
& & + q\sqrt{\frac{2\alpha_{max}}{\theta(1-\omega)}}\sqrt{-(1-\omega)h^{(k-1)}(\xk;\xkm,x^{(k-2)})+\gamma\|\xkm-x^{(k-2)}\|^2} \nonumber\\
&\leq& \frac{q}{\sqrt{1-\omega}} d_k + q\sqrt{\frac{2\alpha_{max}}{\theta(1-\omega)}}d_{k-1} . \label{ine444} 
\end{eqnarray}
From \eqref{sigmadef} and \eqref{dist33} we have
\begin{eqnarray}
\rho\k &\leq&\sqrt{2}\left(-(2\delta\alpha_{max}/\theta+ c)\hk(\xkk;\xk,\xkm) + d\|\xk-\xkm\|^2	\right)^{\frac 1 2} \nonumber\\
&\leq& r d_k	\label{ine555}
\end{eqnarray}
where $r = \sqrt 2 \max \left\{(2\delta\alpha_{max}/\theta+c)/(1-\omega), d/\gamma \right\}^{\frac 1 2}$.
Then, combining \eqref{ine444} with \eqref{ine555}, in view of \eqref{wend} we obtain
\begin{equation*}
\|\partial {\mathcal F}(\yk,\rho\k)\|_-\leq \frac b 2(d_k+d_{k-1}),\ \ \mbox{ with }
b = 2\max\left\{ \frac{q}{\sqrt{1-\omega}}+r,q\sqrt{\frac{2\alpha_{max}}{\theta(1-\omega)}}\right\}
\end{equation*}
and the thesis follows with $I = \{1,2\}$, $\theta_1=\theta_2 = \frac 1 2$.
\end{proof}
The following lemma holds for all methods whose iterates satisfy \ref{H1},\ref{H2}, \ref{H3} with ${\mathcal F}$ defined as in \eqref{def:F}, and it is crucial to ensure condition \ref{H6} for \iPiano{}.
\begin{proposition}\label{prop:H4}
Let Assumptions \ref{assi}--\ref{assiv} be satisfied and assume that $\{\xk\}_\kinN$ satisfies \ref{H1},\ref{H2}, \ref{H3} with ${\mathcal F}$ defined as in \eqref{def:F}. If $\{(x^{(k_j)},\rkj)\}_{j\in\N}$ is a subsequence of $\{(\xk,\rk)\}_\kinN$ converging to some $ (x^*,\rho^*)\in\R^n\times \R^m$ such that $\lim_{j\to\infty}\| u^{(k_j)}-x^{(k_j)}\|=0$, then $\rho^*=0$ and $\lim_{j\to\infty}{\mathcal F}( u^{(k_j)},\rho^{(k_j)}) = {\mathcal F}(x^*,\rho^*)$.
\end{proposition}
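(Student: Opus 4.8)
The plan is to extract from the relative error condition \ref{H3} that $\|\partial\mathcal F(\uk,\rk)\|_-\to 0$, to read off from the separable structure of $\mathcal F$ both $\rho^{(k)}\to 0$ and the existence of subgradients of $f$ at $\uk$ with vanishing norm, and then to pass to the limit along the subsequence using the closedness of $\partial f_1$ together with a squeezing argument for the values of $f_1$.

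First I would collect two elementary consequences of the hypotheses. By \ref{H1}, the sequence $\{\Phi(\xk,s^{(k)})\}_\kinN$ is nonincreasing and bounded from below, hence convergent; summing \ref{H1} then gives $\sum_k a_k d_k^2<+\infty$, and since $\inf_k a_k>0$ (parameter condition \ref{H7}) we obtain $d_k\to 0$. As $I$ is finite with $\sum_{i\in I}\theta_i=1$ and $\{\zeta_k\}_\kinN$ is summable, the right-hand side of \ref{H3} tends to zero, and since $\{b_k\}_\kinN$ is bounded away from zero, condition \ref{H3} forces $\|\partial\mathcal F(\uk,\rk)\|_-\to 0$. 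Because $\mathcal F(u,\rho)=f(u)+\tfrac12\rho^2$ is a separable sum whose second summand is smooth with derivative $\rho$, we have $\partial\mathcal F(u,\rho)=\partial f(u)\times\{\rho\}$ and hence $\|\partial\mathcal F(u,\rho)\|_-=\sqrt{\|\partial f(u)\|_-^2+\rho^2}$. Therefore $\rho^{(k)}\to 0$ and, for $k$ large, $\partial f(\uk)\neq\emptyset$ with a selection $\hat v^{(k)}\in\partial f(\uk)$ satisfying $\|\hat v^{(k)}\|\to 0$; in particular $\rho^{(k_j)}\to\rho^*$ together with $\rho^{(k)}\to 0$ yields $\rho^*=0$, which is the first assertion.

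For the second assertion I would pass to the limit along $\{k_j\}$. From $\|\ukj-\xkj\|\to 0$ and $\xkj\to x^*$ we get $\ukj\to x^*$. Since $\partial f(\ukj)\neq\emptyset$ implies $\ukj\in\dom(f_1)$, by \eqref{sum:subdiff} we may write $\hat v^{(k_j)}=\nabla f_0(\ukj)+w^{(k_j)}$ with $w^{(k_j)}\in\partial f_1(\ukj)$, and Assumption \ref{assii} (continuity of $\nabla f_0$ on $\Omega_0\supset\overline{\dom(f_1)}$) gives $w^{(k_j)}\to -\nabla f_0(x^*)=:w^*$. From the subgradient inequality $f_1(y)\ge f_1(\ukj)+\langle w^{(k_j)},y-\ukj\rangle$, valid for all $y\in\R^n$, taking $\liminf_j$ and using the lower semicontinuity of $f_1$ I obtain $f_1(y)\ge L+\langle w^*,y-x^*\rangle$ for all $y$, with $L:=\liminf_j f_1(\ukj)$; testing this at a point of $\dom(f_1)$ (and recalling that the proper convex function $f_1$ is minorized by an affine function) shows $L\in\R$, and testing at $y=x^*$ gives $f_1(x^*)\le L$, which combined with lower semicontinuity forces $L=f_1(x^*)$ and $w^*\in\partial f_1(x^*)$. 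On the other hand, testing the subgradient inequality directly at $y=x^*$ yields $f_1(\ukj)\le f_1(x^*)+\langle w^{(k_j)},\ukj-x^*\rangle$, so $\limsup_j f_1(\ukj)\le f_1(x^*)$, and therefore $f_1(\ukj)\to f_1(x^*)$. Combining this with the continuity of $f_0$ we conclude $f(\ukj)\to f(x^*)$.

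Finally, since $\rho^{(k_j)}\to\rho^*=0$,
\begin{equation*}
\mathcal F(\ukj,\rkj)=f(\ukj)+\tfrac12(\rkj)^2\longrightarrow f(x^*)=\mathcal F(x^*,0)=\mathcal F(x^*,\rho^*),
\end{equation*}
which is the remaining assertion. The step I expect to be most delicate is the passage from $\|\partial\mathcal F(\uk,\rk)\|_-\to 0$ to $f_1(\ukj)\to f_1(x^*)$: continuity of $f_1$ cannot be invoked, since $x^*$ may lie on the boundary of $\dom(f_1)$, so the value convergence has to be obtained by squeezing between the lower semicontinuity bound and the upper bound furnished by the subgradient inequality. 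A minor bookkeeping point is the justification that $\{a_k\}_\kinN$ and $\{b_k\}_\kinN$ are bounded away from zero, which follows from \ref{H7} and, for \iPiano{}, from the explicit choices $a_k\equiv 1$ and $b_k\equiv 1$ made in Propositions \ref{prop:fund1} and \ref{lemma:3new}.
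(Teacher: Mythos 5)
Your argument is correct and follows essentially the same route as the paper's proof: $d_k\to 0$ from \ref{H1}, vanishing lazy slopes and $\rho^{(k)}\to 0$ from \ref{H3} together with the separable structure of $\mathcal F$, the decomposition \eqref{sum:subdiff} with continuity of $\nabla f_0$ giving $w^{(k_j)}\to -\nabla f_0(x^*)$, and then the convex subgradient inequality tested at $x^*$ combined with lower semicontinuity to squeeze the values (the paper performs the squeeze directly on $\mathcal F$ rather than on $f_1$, and, like you, it implicitly uses that $a_k$ and $b_k$ are bounded away from zero, which holds in the concrete algorithms). Only a cosmetic slip in your redundant detour: testing $f_1(y)\ge L+\langle w^*,y-x^*\rangle$ at $y=x^*$ gives $f_1(x^*)\ge L$ while lower semicontinuity gives $f_1(x^*)\le L$ (you attribute each to the other's source), but both inequalities are available in your argument and your final direct estimate $\limsup_j f_1(u^{(k_j)})\le f_1(x^*)$ is sound, so the proof stands.
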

\begin{proof}
Recalling that \ref{H1} implies $\displaystyle\lim_{k\to\infty} d_k=0$, from \ref{H3} and thanks to the separable structure of $\mathcal F$, we obtain that there exists $\hat v\k \in\partial f(u\k)$ such that $\displaystyle\lim_{k\to\infty} \|\hat v\k\|=\displaystyle\lim_{k\to\infty}\rk = 0$. In particular, in view of \eqref{sum:subdiff}, we can write $\hat v\k = \nabla f_0( u\k) + w\k$, where $w\k\in\partial f_1( u\k)$. Therefore, by continuity of $\nabla f_0$, the following implication holds
\begin{equation*}
\lim_{k\to\infty }\hat v\k=0\Rightarrow \lim_{j\to\infty} w^{(k_j)} = -\nabla f_0(x^*).
\end{equation*}
Adding the quantity $\frac 1 2 (\rho^{(k_j)})^2$ to both sides of the subgradient inequality yields
\begin{eqnarray*}
f_1(x^*) + \frac 1 2 (\rho^{(k_j)})^2&\geq & f_1( u^{(k_j)}) + \langle w^{(k_j)},x^*-u^{(k_j)}\rangle + \frac 1 2 (\rho^{(k_j)})^2\\
&=& {\mathcal F}( u^{(k_j)},\rho^{(k_j)}) -f_0(u^{(k_j)}) + \langle w^{(k_j)}, x^*-u^{(k_j)}\rangle
\end{eqnarray*}
where the last equality is obtained by adding and subtracting $f_0(u^{(k_j)})$ to the right-hand-side. Taking limits on both sides we obtain
\begin{equation*}
f_1(x^*) +\frac 1 2 (\rho^*)^2\geq \lim_{j\to\infty}{\mathcal F}(u^{(k_j)},\rho^{(k_j)}) -f_0(x^*),
\end{equation*}
which, rearranging terms, gives $\displaystyle\lim_{j\to\infty}{\mathcal F}(u^{(k_j)},\rho^{(k_j)}) \leq {\mathcal F}(x^*,\rho^*)$. On the other side, by assumption, $\mathcal F$ is lower semicontinuous, therefore $\displaystyle\lim_{j\to\infty}{\mathcal F}(u^{(k_j)},\rho^{(k_j)}) \geq {\mathcal F}(x^*,\rho^*)$, which completes the proof.
\end{proof}
We are now ready to prove Theorem \ref{thm:i2piano}, which states the convergence of the \iPiano{} iterates to a stationary point.

\begin{proof}[Proof of Theorem \ref{thm:i2piano}]
By Propositions \ref{prop:fund1}-\ref{lemma:4new}-\ref{lemma:3new}, we know that conditions \ref{H1}-\ref{H2}-\ref{H3} hold for \iPiano{}. Furthermore, if $\omega=1$, then $u\k = \hat x^{(k+1)}=\xkk$, and \eqref{fazzollo0} directly implies that $\displaystyle\lim_{k\to\infty} \|u\k -\xk\|= 0$. If $\omega<1$, using \eqref{dist11} and \eqref{fazzollo} we have
\begin{align*}
\lim_{k\to\infty} \|u\k -\xk\| &=\lim_{k\to\infty} \|\yk -\xk\| \\
&\leq  \lim_{k\to\infty}\sqrt{-2\alpha_{max}\left(1+\frac{\tau}{2}\right)\hk(\xkk;\xk,\xkm)}= 0.
\end{align*}
Then, in both cases, the assumptions of Proposition \ref{prop:H4} are satisfied and, therefore, condition \ref{H6} holds.
Finally, condition \ref{H4} follows from \eqref{eq:dk}, while condition \ref{H7} is trivially satisfied, since both sequences $\{a_k\}_{k\in\mathbb{N}}$ and $\{b_k\}_{k\in\mathbb{N}}$ are constant. Then, Theorem \ref{thm:convergence} applies and guarantees that the sequence $\{(x^{(k)},\rho^{(k)})\}_{k\in\mathbb{N}}$ converges to a stationary point $(x^*,\rho^*)$ of $\mathcal{F}$. Note that, since $\mathcal{F}$ is the sum of separable functions, its subdifferential can be written as $\partial \mathcal{F}(x,\rho)=\partial f(x)\times \{\rho\}$. Then, $(x^*,\rho^*)$ is stationary for $\mathcal{F}$ if and only if $\rho^*=0$ and $0\in\partial f(x^*)$. Hence, $x^*$ is a stationary point for $f$ and $\{x^{(k)}\}_{k\in\mathbb{N}}$ converges to it.  
\end{proof}

\subsection{Convergence analysis of \iPianoLA}\label{subsec:convergence_ipianola}
\def\tyk{\tilde y ^{(k)}}\noindent
This section aims to develop the convergence framework for \iPianoLA. The first issue to be addressed is the well posedness of the line--search algorithm. To this end, we prove first the following Lemma. 
\begin{lemma}\label{lemma:1_xs}
Let $d^{(k)}\in\mathbb{R}^{2n}$ be defined according to \eqref{dx_xs}-\eqref{ds_xs}, where $0< \ak\leq \alpha_{max}$, $\beta_k\geq 0$, $\gamma_k\geq\gamma_{min}$, for some given positive real constants $\alpha_{max}>0$, $\gamma_{min}\geq 0$. Define $\Delta_k \in\R_{\leq 0}$ as
\begin{equation}\label{defDeltak}
\Delta_k = h\k(\tyk;\xk,\sk) -\gamma_k\|\xk-\sk\|^2.
\end{equation}
Then, we have
\begin{align}
\Phi'(x^{(k)},s^{(k)};d_x^{(k)},d_s^{(k)})& \leq \Delta_k\label{inedelta}\\
&\leq - a\|\tyk-x^{(k)}\|^2-\gamma_{min} \|x^{(k)}-s^{(k)}\|^2\label{eq:Delta_k_ineq}
\end{align}
where $a>0$ is a positive constant. Therefore, $\Phi'(\xk,\sk;d_x\k,d_s\k)<0$ whenever $\tyk\neq \xk$ or $\xk\neq\sk$.
\end{lemma}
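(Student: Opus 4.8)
The plan is to prove \eqref{inedelta} by inserting the explicit direction $d\k=(d_x\k,d_s\k)$ into the convexity-based upper bound \eqref{xs1} for the directional derivative of $\Phi$, and then recognizing the function $h\k$ in the resulting expression. Since $\tyk\in\dom(f_1)$ and $d_x\k=\tyk-\xk$, applying \eqref{xs1} at $x=\xk$, $s=s^{(k)}$, $y=\tyk$, $d_s=d_s\k$ gives
\[
\Phi'(\xk,s^{(k)};d_x\k,d_s\k)\le f_1(\tyk)-f_1(\xk)+\langle \nabla f_0(\xk)+\xk-s^{(k)},\tyk-\xk\rangle+\langle s^{(k)}-\xk,d_s\k\rangle .
\]
Substituting $d_s\k=\bigl(1+\tfrac{\beta_k}{\alpha_k}\bigr)(\tyk-\xk)+\gamma_k(\xk-s^{(k)})$ in the last term, the inner products against $\tyk-\xk$ recombine so that the coefficient multiplying $\langle\xk-s^{(k)},\tyk-\xk\rangle$ collapses from $1-(1+\tfrac{\beta_k}{\alpha_k})$ to $-\tfrac{\beta_k}{\alpha_k}$, while a term $-\gamma_k\|\xk-s^{(k)}\|^2$ is produced.

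By the definition \eqref{hsigma} of $h\k$, what is left of the first three terms is exactly $h\k(\tyk;\xk,s^{(k)})-\tfrac{1}{2\alpha_k}\|\tyk-\xk\|^2$, so the bound becomes
\[
\Phi'(\xk,s^{(k)};d_x\k,d_s\k)\le h\k(\tyk;\xk,s^{(k)})-\gamma_k\|\xk-s^{(k)}\|^2-\tfrac{1}{2\alpha_k}\|\tyk-\xk\|^2 \;=\; \Delta_k-\tfrac{1}{2\alpha_k}\|\tyk-\xk\|^2 \;\le\; \Delta_k ,
\]
which is \eqref{inedelta}. For \eqref{eq:Delta_k_ineq} I would estimate the two summands of $\Delta_k$ separately; this is the only place the inexactness criterion enters. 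Inequality \eqref{dist33} (with $\alpha=\alpha_k$) gives $h\k(\tyk;\xk,s^{(k)})\le-\tfrac{\theta}{2\alpha_k}\|\tyk-\xk\|^2$, and $\alpha_k\le\alpha_{max}$ turns this into $h\k(\tyk;\xk,s^{(k)})\le-\tfrac{\theta}{2\alpha_{max}}\|\tyk-\xk\|^2$; meanwhile $\gamma_k\ge\gamma_{min}$ yields $-\gamma_k\|\xk-s^{(k)}\|^2\le-\gamma_{min}\|\xk-s^{(k)}\|^2$. Adding the two gives \eqref{eq:Delta_k_ineq} with $a=\theta/(2\alpha_{max})>0$. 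The final strict inequality then follows by combining the displays: if $\tyk\ne\xk$ the term $-a\|\tyk-\xk\|^2$ is already strictly negative, and if $\tyk=\xk$ but $\xk\ne s^{(k)}$ then $h\k(\xk;\xk,s^{(k)})=0$, so the sharper bound above reads $\Phi'(\xk,s^{(k)};d_x\k,d_s\k)\le-\gamma_k\|\xk-s^{(k)}\|^2<0$, using $\gamma_k>0$ in \iPianoLA{}.

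The computation is elementary, so I do not expect a genuine obstacle; the one point that needs care is the bookkeeping in the second step, namely checking that after substituting $d_s\k$ all $s^{(k)}$-dependent inner products combine into precisely the coefficient $-\beta_k/\alpha_k$ of $h\k$ together with the single quadratic remainder $-\tfrac{1}{2\alpha_k}\|\tyk-\xk\|^2$, whose nonnegativity lets it be discarded. It is also worth emphasizing that \eqref{inedelta} itself uses nothing about the inexactness of $\tyk$: the relaxed optimality condition \eqref{inexact_crit3} enters only through the quantitative estimate \eqref{dist33} (and, implicitly, through $h\k(\tyk;\xk,s^{(k)})\le0$) in the derivation of \eqref{eq:Delta_k_ineq}.
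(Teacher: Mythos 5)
Your proof is correct and follows essentially the same route as the paper's: apply \eqref{xs1} with $d_x\k=\tyk-\xk$, substitute $d_s\k$ so the inner products recombine into $h\k(\tyk;\xk,\sk)-\tfrac{1}{2\alpha_k}\|\tyk-\xk\|^2-\gamma_k\|\xk-\sk\|^2$, drop the nonnegative quadratic term, and then use \eqref{dist33} with $\alpha_k\le\alpha_{max}$ and $\gamma_k\ge\gamma_{min}$ to get $a=\theta/(2\alpha_{max})$. Your closing remark that strict negativity in the case $\tyk=\xk$, $\xk\ne\sk$ relies on $\gamma_k>0$ (as enforced in \iPianoLA{} via $\gamma_{min}>0$) is a small point the paper leaves implicit, but it is not a different argument.
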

\begin{proof}
We first observe that \eqref{xs1} with $x=\xk$, $s=\sk$, $y=\tyk$, $d_x=d_x^{(k)}$, $d_s=d_s^{(k)}$, gives:
\begin{align}
\Phi'&(x^{(k)},s^{(k)};d_x^{(k)},d_s^{(k)})\nonumber\\
&\leq f_1(\tyk)-f_1(x^{(k)})+\langle \nabla f_0(x^{(k)})+x^{(k)}-s^{(k)},d_x^{(k)}\rangle +\langle s^{(k)}-x^{(k)},d_s^{(k)}\rangle\label{fc1}\\
 &= f_1(\tyk) -f_1(\xk) +\langle \nabla f_0(\xk),\tyk-\xk\rangle + \langle \xk-\sk,\tyk-\xk\rangle +\nonumber\\
 &\phantom{=} + \left(1+\frac{\beta_k}{\alpha_k}\right)\langle \sk-\xk,\tyk-\xk\rangle -\gamma_k\|\xk-\sk\|^2\nonumber\\
				&= f_1(\tyk) -f_1(\xk) +\langle \nabla f_0(\xk)-\frac{\beta_k}{\alpha_k}(\xk-\sk),\tyk-\xk\rangle-\gamma_k\|\xk-\sk\|^2\nonumber\\
				&\leq h\k(\tyk;\xk,\sk) -\gamma_k\|\xk-\sk\|^2 \label{fc2}\\
				&\leq  -\frac \theta {2\alpha_k}\|\tyk-\xk\|^2 -\gamma_k\|\xk-\sk\|^2\nonumber
\end{align}
where the last inequality follows from \eqref{dist33}.
Then, the thesis follows from $\alpha_k\leq \alpha_{max}$ with $a = \theta/{2\alpha_{max}}$.
\end{proof}

The well posedness of the line--search procedure and its main properties are summarized in the following lemma.
\begin{lemma}\label{lemma:arm}
Let the assumptions of Lemma \ref{lemma:1_xs} be satisfied with, in addition, $\beta_k\leq\beta_{max}$, $\gamma\leq \gamma_{max}$, ${ \beta_{max}\geq 0}$, $\gamma_{max}\geq 0$. Then, the Armijo backtracking line--search algorithm terminates in a finite number of steps, and there exists $\lambda_{min}>0$ such that the parameter $\lamk^+$ computed with the line--search algorithm satisfies
\begin{equation}\label{eq:lemmaarm2}
\lamk^+ \geq \lambda_{min}.
\end{equation} 
\end{lemma}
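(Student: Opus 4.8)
The plan is to produce, for each fixed $k$, a quadratic majorant of $\Phi$ along the search segment $\lambda\mapsto(\xk+\lambda d_x^{(k)},\sk+\lambda d_s^{(k)})$, $\lambda\in[0,1]$, from which both the finite termination of Algorithm~\ref{algo:LS} and the uniform lower bound on $\lamk^+$ follow. First I would split $\Phi=\Phi_0+\Phi_1$ and estimate the two parts separately. For $\Phi_0$ I apply the Descent Lemma using that $\nabla\Phi_0$ is globally Lipschitz with constant $L_{\Phi_0}=L+2$ by \eqref{nablaH0lip}, obtaining
\[
\Phi_0(\xk+\lambda d_x^{(k)},\sk+\lambda d_s^{(k)})\le \Phi_0(\xk,\sk)+\lambda\langle\nabla\Phi_0(\xk,\sk),\dk\rangle+\frac{L_{\Phi_0}}{2}\lambda^2\|\dk\|^2 .
\]
For $\Phi_1$, since $\Phi_1(x,s)=f_1(x)$ does not depend on $s$ and $\xk+\lambda d_x^{(k)}=(1-\lambda)\xk+\lambda\tyk$, convexity of $f_1$ yields $\Phi_1(\xk+\lambda d_x^{(k)},\sk+\lambda d_s^{(k)})\le\Phi_1(\xk,\sk)+\lambda\big(f_1(\tyk)-f_1(\xk)\big)$. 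Adding the two estimates and recalling, from the chain of (in)equalities in the proof of Lemma~\ref{lemma:1_xs} running from \eqref{fc1} to \eqref{fc2}, that $f_1(\tyk)-f_1(\xk)+\langle\nabla\Phi_0(\xk,\sk),\dk\rangle\le h^{(k)}(\tyk;\xk,\sk)-\gamma_k\|\xk-\sk\|^2=\Delta_k$, I arrive at
\[
\Phi(\xk+\lambda d_x^{(k)},\sk+\lambda d_s^{(k)})\le\Phi(\xk,\sk)+\lambda\Delta_k+\frac{L_{\Phi_0}}{2}\lambda^2\|\dk\|^2 ,\qquad \lambda\in[0,1].
\]

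Next I would compare with the Armijo right-hand side. If $\dk=0$ then $\Delta_k=0$ and the test holds at $\lambda^+=1$, so $\lamk^+=1$. If $\dk\neq0$ then $\Delta_k<0$ by \eqref{eq:Delta_k_ineq}, and the majorant shows the Armijo inequality $\Phi(\xk+\lambda d_x^{(k)},\sk+\lambda d_s^{(k)})\le\Phi(\xk,\sk)+\sigma\lambda\Delta_k$ holds for every $0<\lambda\le\bar\lambda_k:=\frac{2(1-\sigma)(-\Delta_k)}{L_{\Phi_0}\|\dk\|^2}>0$. Since the backtracking examines $1,\delta,\delta^2,\dots$, it stops at the first $\delta^j\le\bar\lambda_k$, which proves finite termination; and because the value tested just before acceptance can fail the Armijo check only if it exceeds $\bar\lambda_k$, the output satisfies $\lamk^+\ge\min\{1,\delta\bar\lambda_k\}$. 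To make this uniform it remains to bound $\|\dk\|^2/(-\Delta_k)$ from above independently of $k$. By \eqref{eq:Delta_k_ineq}, $-\Delta_k\ge a\|\tyk-\xk\|^2+\gamma_{min}\|\xk-\sk\|^2$ with $a=\theta/(2\alpha_{max})>0$; on the other hand, from \eqref{dx_xs}--\eqref{ds_xs}, the elementary bound $\|u+v\|^2\le2\|u\|^2+2\|v\|^2$, and the parameter bounds $\alpha_k\ge\alpha_{min}$, $\beta_k\le\beta_{max}$, $\gamma_k\le\gamma_{max}$,
\[
\|\dk\|^2=\|\tyk-\xk\|^2+\Big\|\big(1+\tfrac{\beta_k}{\alpha_k}\big)(\tyk-\xk)+\gamma_k(\xk-\sk)\Big\|^2\le C_1\|\tyk-\xk\|^2+C_2\|\xk-\sk\|^2 ,
\]
with $C_1=1+2(1+\beta_{max}/\alpha_{min})^2$ and $C_2=2\gamma_{max}^2$ independent of $k$. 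Hence $\|\dk\|^2\le M(-\Delta_k)$ with $M=\max\{C_1/a,\,C_2/\gamma_{min}\}$, so $\bar\lambda_k\ge 2(1-\sigma)/(L_{\Phi_0}M)=:\underline\lambda>0$ and the lemma follows with $\lambda_{min}=\min\{1,\delta\underline\lambda\}$.

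The Descent Lemma and the convexity of $f_1$ are routine; the one point that requires care is the last, uniform estimate, where one must pair the two quadratic forms lower-bounding $-\Delta_k$ against the two upper-bounding $\|\dk\|^2$ term by term. This is precisely where the lower bounds $\alpha_k\ge\alpha_{min}>0$ and $\gamma_k\ge\gamma_{min}>0$ are essential: without them the coefficient $1+\beta_k/\alpha_k$ in $d_s^{(k)}$, respectively the ratio $\|\xk-\sk\|^2/(-\Delta_k)$, could degenerate along the iterations and no uniform $\lambda_{min}$ would exist.
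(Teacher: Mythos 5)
Your proposal is correct and follows essentially the same route as the paper's own proof: the Descent Lemma applied to $\Phi_0$ with constant $L_{\Phi_0}=L+2$, convexity of $f_1$ along $\xk+\lambda d_x\k=(1-\lambda)\xk+\lambda\tyk$, the estimate \eqref{fc1}--\eqref{fc2} from Lemma \ref{lemma:1_xs}, a $k$-independent bound of $\|\dk\|^2$ by a multiple of $-\Delta_k$, and the standard backtracking argument yielding $\lamk^+\geq\min\{1,\delta\underline\lambda\}$. The only (immaterial) difference is in how the uniform constant is assembled: the paper factors $\gamma_k$ out of the $d_s\k$-terms and compares against the term $\gamma_k\|\xk-\sk\|^2$ kept inside $-\Delta_k$, so it never divides by $\gamma_{min}$, whereas you bound $\gamma_k^2\leq\gamma_{max}^2$ and then divide by $\gamma_{min}$ --- harmless here since \iPianoLA{} takes $\gamma_{min}>0$, but it makes your closing remark that $\gamma_{min}>0$ is essential for a uniform $\lambda_{min}$ slightly too strong.
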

\begin{proof}
Let us first prove that 
\begin{equation}\label{eq:cor1_xs}
\Delta_k\leq - C \|\dk\|^2, \ \ \forall k\in\N.
\end{equation}
for a positive constant $C>0$. 
Setting $\delta_k = 1+\frac{\beta_k}{\alpha_k}$, the bounds on the parameters imply that $1\leq\delta_k\leq \bar\delta$, 
with $\bar \delta = 1+\frac{\beta_{max}}{\alpha_{min}}$.
By definition of $\dk$ in \eqref{dk_xs} we have
\begin{align}
\|\dk\|^2 &= \|d_x\k\|^2 + \|d_s\k\|^2\nonumber\\
&= \|\tyk-\xk\|^2 + \|\delta_k(\tyk-\xk) +\gamma_k(\xk-\sk)\|^2\nonumber\\
&= (1+\delta_k^2)\|\tyk-\xk\|^2 + \gamma_k^2\|\xk-\sk\|^2+2\delta_k\gamma_k\langle \tyk-\xk,\xk-\sk\rangle\nonumber\\
&\leq (1+\delta_k^2+\delta_k\gamma_k)\|\tyk-\xk\|^2 + (\gamma_k^2 + \delta_k\gamma_k)\|\xk-\sk\|^2\nonumber\\
&\leq (1+\bar \delta^2+\bar \delta \gamma_{max})\|\tyk-\xk\|^2 + \gamma_k(\gamma_{max} +  \bar\delta)\|\xk-\sk\|^2\nonumber\\
&\leq \frac 1 C (a\|\tyk-\xk\|^2 + \gamma_k\|\xk-\sk\|^2)\nonumber
\end{align}
where $C =1/\max\{(1+\bar \delta^2+\bar \delta \gamma_{max})/a,\gamma_{max} + \bar \delta\}$. Multiplying both sides of the last inequality above by $C$ and combining with \eqref{eq:Delta_k_ineq} gives \eqref{eq:cor1_xs}.\\
Since $\Phi_0$ has $M$-Lipschitz continuous gradient, with $M=L+2$ (see \eqref{nablaH0lip}), we can apply the Descent Lemma obtaining
\begin{align}
\Phi_0(\xk+\lambda d_x\k, \sk +\lambda d_s\k) &\leq \Phi_0(\xk,\sk) + \lambda\langle \nabla \Phi_0(\xk,\sk),\dk\rangle + \frac { L_{\Phi_0}} 2 \lambda^2\|\dk\|^2\nonumber\\
&\leq \Phi_0(\xk,\sk) + \lambda\langle \nabla \Phi_0(\xk,\sk),\dk\rangle - \frac { L_{\Phi_0}} {2C} \lambda^2\Delta_k,\label{desclemmaH0}
\end{align} 
where the second inequality follows from \eqref{eq:cor1_xs}. From the Jensen's inequality applied to the convex function $f_1$ we also obtain
\begin{eqnarray}
\Phi_1(\xk+\lambda d_x\k,\sk+\lambda d_s\k) &=& f_1(\xk+\lambda d_x\k)= f_1(\lambda \tyk+(1-\lambda)\xk)\nonumber\\
& \leq& (1-\lambda)f_1(\xk) + \lambda f_1(\tyk).\label{jensen}
\end{eqnarray}
Summing \eqref{desclemmaH0} with \eqref{jensen} gives
\begin{align*}
\Phi(\xk&+\lambda d_x\k, \sk +\lambda d_s\k) \leq \\
&\leq \Phi(\xk,\sk) + \lambda\left(f_1(\tyk) - f_1(\xk) +\langle \nabla \Phi_0(\xk,\sk),\dk\rangle\right)- \frac { L_{\Phi_0}} {2C} \lambda^2\Delta_k\\
& \leq  \Phi(\xk,\sk) + \lambda\Delta_k - \frac { L_{\Phi_0}} {2C} \lambda^2\Delta_k,
\end{align*}
where the last inequality follows from \eqref{fc1}--\eqref{fc2}. The above relation implies 
\begin{equation}\nonumber 
\Phi(\xk+\lambda d_x\k, \sk + \lambda d_s\k) \leq \Phi(\xk,\sk) + \lambda (1-\rho\lambda)\Delta_k,\ \ \forall\lambda\in[0,1]
\end{equation}
with $\rho = \frac { L_{\Phi_0}} {2C}$.
Moreover, comparing the above inequality with the Armijo condition \eqref{ine_arm} 
shows that the last one is surely fulfilled when $\lamk^+$ satisfies $ 1-\rho\lamk^+\geq \sigma$, that is when $\lamk^+\leq (1-\sigma)/\rho$.\\ Since $\lamk^+$ in the backtracking procedure is obtained starting from 1 and by successive reductions of a factor $\delta<1$, we have $\lamk^+\geq \delta^{ L_{\Phi_0}}$, where $ L_{\Phi_0}$ is  the smallest nonnegative integer such that $\delta^{ L_{\Phi_0}}\leq(1-\sigma)/\rho$. Therefore, \eqref{eq:lemmaarm2} is satisfied with $\lambda_{min} = \delta^{ L_{\Phi_0}}$.
\end{proof}



In the remaining of this section we show that \iPianoLA{} can be cast in the framework of the abstract scheme. We first show that conditions \ref{H1}--\ref{H3} are satisfied. 
\begin{proposition}\label{checkH1}
Let $\{(\xk,\sk)\}_\kinN$ be the sequence generated by \iPianoLA{}. Then, condition \ref{H1} holds with $d_k=\sqrt{-\Delta_k}$, $a_k = \sigma\lambda_{min}$. Moreover, under Assumption \ref{assiv}, we also have
\begin{equation}\label{tutto0}
0=\lim_{k\to\infty} \|\xk-\sk\| = \lim_{k\to\infty} h\k(\tyk;\xk,\sk) = \lim_{k\to\infty}\|\tyk-\xk\|.
\end{equation} 
\end{proposition}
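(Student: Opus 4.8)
The plan is to verify condition \ref{H1} directly from the acceptance rule at {\textsc{Step 7}} of \iPianoLA{}, then to derive the limits \eqref{tutto0} from the summability consequences of \ref{H1}. First I would observe that, regardless of which of the two alternatives is selected at {\textsc{Step 7}}, the new point $(\xkk,\skk)$ satisfies
\begin{equation*}
\Phi(\xkk,\skk)\leq \Phi(\xk,\sk)+\sigma\lamk\Delta_k,
\end{equation*}
since the first alternative is chosen precisely when this inequality holds with $(\tyk,\xk)$ in place of $(\xkk,\skk)$, and the second alternative satisfies it by construction of the line--search (Algorithm \ref{algo:LS}). By Lemma \ref{lemma:1_xs}, $\Delta_k\leq 0$, and by Lemma \ref{lemma:arm}, $\lamk\geq \lambda_{min}>0$; hence $\sigma\lamk\Delta_k\leq \sigma\lambda_{min}\Delta_k = -\sigma\lambda_{min}(-\Delta_k)$. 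Setting $d_k=\sqrt{-\Delta_k}$ and $a_k=\sigma\lambda_{min}$, this reads $\Phi(\xkk,\skk)+a_kd_k^2\leq \Phi(\xk,\sk)$, which is exactly \ref{H1}. Note $a_k$ is a positive constant, so the relevant part of \ref{H7} is automatic here too.

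Next I would derive \eqref{tutto0}. Under Assumption \ref{assiv}, $f$ is bounded from below, and since $\Phi(x,s)=f(x)+\tfrac12\|x-s\|^2\geq f(x)$, the function $\Phi$ is bounded from below as well. Condition \ref{H1} then forces $\{\Phi(\xk,\sk)\}_\kinN$ to be nonincreasing and convergent, so $\sum_k a_kd_k^2<+\infty$; since $a_k\equiv \sigma\lambda_{min}>0$, this gives $d_k^2=-\Delta_k\to 0$. Recalling $\Delta_k = h\k(\tyk;\xk,\sk)-\gamma_k\|\xk-\sk\|^2$ and the chain of inequalities established in the proof of Lemma \ref{lemma:1_xs},
\begin{equation*}
-\Delta_k \geq \frac{\theta}{2\alpha_k}\|\tyk-\xk\|^2+\gamma_k\|\xk-\sk\|^2 \geq a\|\tyk-\xk\|^2+\gamma_{min}\|\xk-\sk\|^2,
\end{equation*}
with $a=\theta/(2\alpha_{max})>0$ and both terms on the right nonnegative. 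Hence $-\Delta_k\to 0$ implies $\|\tyk-\xk\|\to 0$ and $\gamma_{min}\|\xk-\sk\|\to 0$. If $\gamma_{min}>0$ this already yields $\|\xk-\sk\|\to 0$; in the general case $\gamma_{min}\geq 0$ one uses instead that $\gamma_k\geq \gamma_{min}$ was only a lower bound and that in \iPianoLA{} we have $\gamma_k\in[\gamma_{min},\gamma_{max}]$ with $\gamma_{min}>0$ by the algorithm's input specification, so $\gamma_{min}\|\xk-\sk\|^2\leq -\Delta_k\to 0$ gives $\|\xk-\sk\|\to 0$. Finally, from $h\k(\tyk;\xk,\sk)=\Delta_k+\gamma_k\|\xk-\sk\|^2$ and the two limits just obtained, together with $h\k(\tyk;\xk,\sk)\leq 0$, we get $h\k(\tyk;\xk,\sk)\to 0$, completing \eqref{tutto0}.

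The main obstacle I anticipate is purely bookkeeping: making sure the constant $a_k=\sigma\lambda_{min}$ is genuinely bounded away from zero, which relies on Lemma \ref{lemma:arm}, and handling the two branches of {\textsc{Step 7}} uniformly so that the single inequality $\Phi(\xkk,\skk)\leq \Phi(\xk,\sk)+\sigma\lamk\Delta_k$ covers both cases — the first branch needs the observation that the acceptance test there is stated with $\sigma\lamk\Delta_k$ on the right, so no separate estimate is needed. Everything else is a direct reuse of the inequalities already proved in Lemmas \ref{lemma:1_xs} and \ref{lemma:arm}; there is no genuinely hard estimate left to do.
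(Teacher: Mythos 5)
Your proposal is correct and follows essentially the same route as the paper's proof: both branches of \textsc{Step 7} give $\Phi(\xkk,\skk)\leq \Phi(\xk,\sk)+\sigma\lambda_k\Delta_k$, Lemma \ref{lemma:arm} and $\Delta_k\leq 0$ upgrade this to the constant $a_k=\sigma\lambda_{min}$ with $d_k^2=-\Delta_k$, and boundedness of $\Phi$ from below plus \eqref{eq:Delta_k_ineq} (with $\gamma_{min}>0$ from the algorithm's input) yields \eqref{tutto0}. Your write-up is just a more detailed version of the paper's argument, including the explicit handling of the two branches and of the limit of $h^{(k)}(\tilde y^{(k)};x^{(k)},s^{(k)})$, which the paper leaves implicit.
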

\begin{proof}
From the updating rule at \textsc{STEP 7} and from Lemma \ref{lemma:arm}, we have
\begin{eqnarray*}
\Phi(\xkk,\skk) 
& \leq& \Phi(\xk,\sk) + \sigma \lamk\Delta_k\leq\Phi(\xk,\sk) + \sigma\lambda_{min}\Delta_k.
\end{eqnarray*}
Then, condition \ref{H1} is satisfied with $d_k^2=-\Delta_k$, $ a_k = \sigma\lambda_{min}$.
Since from Assumption \ref{assiv} $f$ is bounded from below, $\Phi$ is bounded from below as well. Therefore, \ref{H1} implies $\displaystyle-\sum_{k=0}^{\infty}\Delta_k<\infty$ which, in turn, yields $\displaystyle\lim_{k\to\infty}\Delta_k=0$. Recalling \eqref{eq:Delta_k_ineq}, this implies \eqref{tutto0}.
\end{proof}
In the following we describe the setup for proving \ref{H2}, with the second auxiliary function defined as in \eqref{def:F}.
\begin{proposition}\label{checkH2}
Let $\{(\xk,\sk)\}_{\kinN}$ be the sequence generated by Algorithm \iPianoLA{} with $\gamma_{min}>0$ and let ${\mathcal F}$ be defined as in \eqref{def:F}. Then, there exist $\{\rho^{(k)}\}_{k\in\mathbb{N}},\{r_k\}_\kinN\subset\R$, with $\lim_{k\to\infty}r_k=0$, such that \ref{H2} holds with $\uk=\hyk$, where $\hyk$ is the exact minimizer of $\hk(y;\xk,\sk)$, i.e.,
\begin{equation}\nonumber
\hyk = \argmin_{y\in\R^n} \hk(y;\xk,\sk).
\end{equation}
\end{proposition}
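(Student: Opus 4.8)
The plan is to verify the two inequalities comprising \ref{H2} directly, taking $\uk=\hyk$ and combining Lemma \ref{lemma:crucialine1}, the bound \eqref{dist33}, the acceptance rule at \textsc{Step 7}, and the limits \eqref{tutto0} established in Proposition \ref{checkH1}. Throughout I abbreviate $H_k:=\hk(\tyk;\xk,\sk)$, which satisfies $H_k\le 0$ by \eqref{inexact_crit2}, and I note that $\xk\in\dom(f_1)$ for every $k$: this follows by induction, since $x^{(0)}\in\dom(f_1)$ and, at \textsc{Step 7}, $\xkk$ equals either $\tyk\in\dom(f_1)$ or the convex combination $\lamk\tyk+(1-\lamk)\xk$, which lies in the convex set $\dom(f_1)$. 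Hence Lemma \ref{lemma:crucialine1} applies with $x=\xk$, $s=\sk$, and the constants $c,d,\bar c,\bar d$ appearing there are nonnegative by their explicit expressions.

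For the left inequality of \ref{H2}, I would first observe that the acceptance rule at \textsc{Step 7} yields $\Phi(\xkk,\skk)\le \Phi(\tyk,\xk)$ in both branches (equality in the first; in the second, chain the line--search inequality \eqref{ine_arm} with the failed test $\Phi(\tyk,\xk)>\Phi(\xk,\sk)+\sigma\lamk\Delta_k$), so that $\Phi(\xkk,\skk)\le f(\tyk)+\tfrac12\|\tyk-\xk\|^2$. Then \eqref{crucialine1} gives $f(\tyk)\le f(\hyk)-cH_k+d\|\xk-\sk\|^2$, while \eqref{dist33} (with $\alpha_k\le\alpha_{max}$) gives $\tfrac12\|\tyk-\xk\|^2\le\tfrac{\alpha_{max}}{\theta}(-H_k)$; adding these two bounds,
\[
\Phi(\xkk,\skk)\ \le\ f(\hyk)+\Big(c+\tfrac{\alpha_{max}}{\theta}\Big)(-H_k)+d\|\xk-\sk\|^2 .
\]
Defining $\rho^{(k)}:=\sqrt{2}\,\big[(c+\alpha_{max}/\theta)(-H_k)+d\|\xk-\sk\|^2\big]^{1/2}$, which is well posed since $-H_k\ge 0$, the displayed bound is exactly $\Phi(\xkk,\skk)\le f(\hyk)+\tfrac12(\rho^{(k)})^2=\mathcal{F}(\hyk,\rho^{(k)})$.

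For the right inequality I would use \eqref{inef2}, namely $f(\hyk)\le f(\xk)-\bar cH_k+\bar d\|\xk-\sk\|^2$; adding $\tfrac12(\rho^{(k)})^2$ and recalling that $\Phi(\xk,\sk)=f(\xk)+\tfrac12\|\xk-\sk\|^2$ gives $\mathcal{F}(\hyk,\rho^{(k)})\le\Phi(\xk,\sk)+r_k$ with
\[
r_k:=\Big(\bar c+c+\tfrac{\alpha_{max}}{\theta}\Big)(-H_k)+(\bar d+d)\|\xk-\sk\|^2 ,
\]
which is nonnegative (all constants nonnegative and $-H_k\ge 0$) and satisfies $r_k\to 0$ because $\hk(\tyk;\xk,\sk)\to 0$ and $\|\xk-\sk\|\to 0$ by \eqref{tutto0}. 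This proves \ref{H2} with the claimed $\{u^{(k)}\}$, $\{\rho^{(k)}\}$, $\{r_k\}$. The only delicate point is the bookkeeping of constants so that $\rho^{(k)}$ is real-valued and $r_k$ can be taken nonnegative; no estimate beyond Lemma \ref{lemma:crucialine1}, \eqref{dist33} and Proposition \ref{checkH1} is required, so I do not expect a genuine obstacle here.
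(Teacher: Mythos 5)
Your proof is correct and follows essentially the same route as the paper's: bound $\Phi(\xkk,\skk)\leq\Phi(\tyk,\xk)$ via \textsc{Step 7}, apply \eqref{crucialine1} and \eqref{dist33} to get the left inequality with the same choice of $\rho^{(k)}$, then use \eqref{inef2} and the limits \eqref{tutto0} for the right inequality. Your $r_k$ differs from the paper's only by the harmless extra term $\tfrac12\|\xk-\sk\|^2$ (making nonnegativity manifest), and your explicit remarks on $\xk\in\dom(f_1)$ and the sign of the constants are fine, if not strictly spelled out in the paper.
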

\begin{proof}
From {\textsc STEP 7}, we have
\begin{eqnarray*}\nonumber
\Phi(\xkk,\skk) &\leq& \Phi(\tyk,\xk) = f(\tyk) + \frac 1 2 \|\tyk-\xk\|^2\\
&\leq & f(\hyk) - \left(c+\frac \amax \theta\right)\hk(\tyk;\xk,\sk) + d\|\xk-\sk\|^2, 
\end{eqnarray*}
where the last inequality follows from \eqref{crucialine1} and \eqref{dist33}. Setting
\begin{equation}\label{rhok_xs}
\rho\k = \sqrt 2\left(-\left(c+\frac \amax \theta\right)\hk(\tyk;\xk,\sk)+ d\|\xk-\sk\|^2\right)^{\frac 1 2},
\end{equation}
we obtain $ \Phi(\xkk,\skk) \leq {\mathcal F}(\hyk,\rho\k)$, which represents the left-most inequality in \ref{H2}, with $u\k= \hyk$. On the other hand, from inequality \eqref{inef2} we obtain
\begin{equation*}
{\mathcal F}(\hyk,\rho\k) = f(\hyk) +\frac 1 2 (\rho\k)^2  \leq f(\xk) -\bar c \hk(\tyk;\xk,\sk) +\bar d\|\xk-\sk\|^2+\frac 1 2 (\rho\k)^2.
\end{equation*} 
Setting $r_k =  (\rho\k)^2/2  -\bar c \hk(\tyk;\xk,\sk) +(\bar d-\frac{1}{2})\|\xk-\sk\|^2
$, we can write
\begin{equation*}
{\mathcal F}(\hyk,\rho\k)  \leq f(\xk) +\frac 1 2 \|\xk-\sk\|^2 + r_k = \Phi(\xk,\sk) + r_k.
\end{equation*}
From \eqref{tutto0} we have that $\lim_{k\to\infty}r_k=0$ and this proves \ref{H2}.
\end{proof}
\begin{proposition}\label{checkH3}
Let $\{(\xk,\sk)\}_{\kinN}$ be the sequence generated by \iPianoLA{} with $\gamma_{min}>0$. Then, there exists a positive constant $b$ such that \ref{H3} is satisfied with $I = \{1\}$, $\theta_1 = 1$, $\zeta_k = 0$.
\end{proposition}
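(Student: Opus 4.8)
The plan is to bound the lazy slope $\|\partial\F(\hyk,\rho\k)\|_-$ directly by a constant multiple of $d_k=\sqrt{-\Delta_k}$, with $u\k=\hyk$, $\rho\k$ and $d_k$ as in Propositions \ref{checkH1}--\ref{checkH2}, so that \ref{H3} follows with the trivial choices $b_{k+1}=1$, $I=\{1\}$, $\theta_1=1$, $\zeta_k=0$; in particular, since only $d_k$ ever appears on the right-hand side, the convention $d_j=0$ for $j\le 0$ plays no role. Since $\F$ is separable, as recorded in \eqref{wend}, we have $\|\partial\F(u,\rho)\|_-\le\|v\|+|\rho|$ for every $v\in\partial f(u)$; hence it suffices to produce a subgradient $\hat v\k\in\partial f(\hyk)$ with $\|\hat v\k\|\le c_1 d_k$ and to check $\rho\k\le c_2 d_k$ for suitable constants $c_1,c_2>0$.

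First I would apply the subgradient estimate \eqref{ine44}, with the roles $x=\xk$, $s=\sk$, inexact point $\tyk$ and exact minimizer $\hyk$: there is $\hat v\k\in\partial f(\hyk)$ with $\|\hat v\k\|\le q(\sqrt{-\hk(\tyk;\xk,\sk)}+\|\xk-\sk\|)$, where $q$ depends only on $\amin,\amax,\beta_{max},L$. Next I would relate both quantities on the right to $d_k$. By the identity $d_k^2=-\Delta_k=-\hk(\tyk;\xk,\sk)+\gamma_k\|\xk-\sk\|^2$ from Proposition \ref{checkH1} together with $\gamma_k\|\xk-\sk\|^2\ge 0$, we get $-\hk(\tyk;\xk,\sk)\le d_k^2$; moreover \eqref{eq:Delta_k_ineq} gives $\gamma_{min}\|\xk-\sk\|^2\le-\Delta_k=d_k^2$, i.e.\ $\|\xk-\sk\|\le d_k/\sqrt{\gamma_{min}}$ (this is exactly where $\gamma_{min}>0$ is used). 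Combining, $\|\hat v\k\|\le q(1+1/\sqrt{\gamma_{min}})\,d_k$.

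Then I would treat $\rho\k$ as given by \eqref{rhok_xs}. The constants $c+\amax/\theta$ and $d$ occurring there are nonnegative (they are as in Lemma \ref{lemma:crucialine1}), so the same two bounds $-\hk(\tyk;\xk,\sk)\le d_k^2$ and $\|\xk-\sk\|^2\le d_k^2/\gamma_{min}$ yield $\rho\k\le\sqrt{2(c+\amax/\theta+d/\gamma_{min})}\,d_k=:r\,d_k$. Putting the two estimates together with \eqref{wend}, I obtain $\|\partial\F(\hyk,\rho\k)\|_-\le(q(1+1/\sqrt{\gamma_{min}})+r)\,d_k$, which is precisely \ref{H3} with $b_{k+1}=1$, $b=q(1+1/\sqrt{\gamma_{min}})+r$, $I=\{1\}$, $\theta_1=1$, $\zeta_k=0$; note that with $b_k\equiv 1$ and $a_k\equiv\sigma\lambda_{min}$ from Proposition \ref{checkH1}, condition \ref{H7} will also be immediate once we come to it.

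The argument is essentially bookkeeping; the only genuinely load-bearing point is that $\|\xk-\sk\|$ must be dominated by $d_k$, and this is where the hypothesis $\gamma_{min}>0$ is indispensable: it is the $-\gamma_k\|\xk-\sk\|^2$ term inside $\Delta_k$ that lets $\|\xk-\sk\|$ be absorbed into $d_k$. Without it, both $\|\hat v\k\|$ and $\rho\k$ could only be bounded by a term in $d_k$ plus an independent term in $\|\xk-\sk\|$, which would break the relative error form required by \ref{H3}.
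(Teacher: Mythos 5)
Your proposal is correct and follows essentially the same route as the paper's proof: bound $\|\partial\F(\hyk,\rho\k)\|_-$ by $\|\hat v\k\|+|\rho\k|$ via the separable structure, invoke \eqref{ine44} for the subgradient at the exact point $\hyk$, and absorb both $\sqrt{-\hk(\tyk;\xk,\sk)}$ and $\|\xk-\sk\|$ into $\sqrt{-\Delta_k}=d_k$ using $\gamma_k\ge\gamma_{min}>0$, exactly as in \eqref{righe} and the bound on $\rho\k$ from \eqref{rhok_xs}. The only differences are cosmetic (you bound the two terms separately by $d_k^2$ and $d_k^2/\gamma_{min}$, giving a sum where the paper uses a max in the constant $B$), so the resulting constant $b$ differs harmlessly.
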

\begin{proof}
From \eqref{ine44} we know that there exists a subgradient $\hat v\k\in\partial f(\hat{y}^{(k)})$ such that
\begin{equation}\label{inevk_xs}
\|\hat v\k\| \leq q\sqrt{-\hk(\tyk;\xk,\sk)} + q\|\xk-\sk\|
\end{equation}
and, reasoning as in the proof of Proposition \ref{prop:H4}, it follows that
\begin{equation}\label{quinta}
\|\partial {\mathcal F}(\hyk,\rho\k)\|_- \leq \left\|\begin{pmatrix} \hat v\k\\ \rho\k\end{pmatrix} \right\|\leq \|\hat v\k\| + \lvert\rho\k\rvert.
\end{equation}
Let us analyze the two terms at the right-hand side of the inequality above, showing that both can be bounded from above with a multiple of $\sqrt{-\Delta_k}$. From \eqref{inevk_xs} we obtain
\begin{eqnarray*}\nonumber
\|\hat v\k\| &\leq& q\sqrt{-\hk(\tyk;\xk,\sk) + \gamma_{min}\|\xk-\sk\|^2}\\
& & + \frac{q}{\sqrt{\gamma_{min}}}\sqrt{\gamma_{min}\|\xk-\sk\|^2-\hk(\tyk;\xk,\sk)}.
\end{eqnarray*}
which, setting $A = q\left(1+1/\sqrt{\gamma_{min}}\right)$ and using \eqref{eq:Delta_k_ineq}, yields 
\begin{equation}\label{righe}
\|\hat v\k\| \leq A \sqrt{-\hk(\tyk;\xk,\sk) + \gamma_{min}\|\xk-\sk\|^2}\leq A\sqrt{-\Delta_k}.
\end{equation}
On the other hand, from definition \eqref{rhok_xs}
\begin{eqnarray*}
\rho\k 
&\leq & B \sqrt{-\hk(\tyk;\xk,\sk) + \gamma_k \|\xk-\sk\|^2} = B\sqrt{-\Delta_k}
\end{eqnarray*}
where $B = \sqrt 2\max\left\{c+  \amax /{\theta},  d / {\gamma_{min}}\right\}^{\frac 1 2}$.
Therefore, combining the last inequality above with \eqref{righe} and \eqref{quinta}, yields $\|\partial {\mathcal F}(\hyk,\rho\k)\|_-\leq (A+B)\sqrt{-\Delta_k}$ which proves that \ref{H3} is satisfied with $I = \{1\}$, $\theta_1 = 1$, $\zeta_k = 0$, $b=A+B$, $b_k=1$.
\end{proof}
We are now ready for proving Theorem \ref{thm:convipianola}, which states the main convergence result for Algorithm \iPianoLA{}.

\begin{proof}[Proof of Theorem \ref{thm:convipianola}]
By Proposition \ref{checkH1}, \ref{checkH2}, and \ref{checkH3}, we know that conditions \ref{H1}-\ref{H2}-\ref{H3} hold for \iPianoLA{}. 
From \eqref{dist11} we have $\displaystyle\lim_{k\to\infty}\|u\k-\xk\| = \displaystyle\lim_{k\to\infty}\|\hat{y}^{(k)}-\xk\|=0$. Hence we can apply Proposition \ref{prop:H4} and conclude that \ref{H6} holds.
Moreover, condition \ref{H4} holds as a consequence of Lemma \ref{lemma:1_xs}, since $d_k= \sqrt{-\Delta_k}$ and $\|\tilde{y}^{(k)}-x^{(k)}\|\geq \|x^{(k+1)}-x^{(k)}\|/\lambda_k\geq \|x^{(k+1)}-x^{(k)}\| $ (see STEP 7). Finally, condition \ref{H7} is trivially satisfied, since both sequences $\{a_k\}_{k\in\mathbb{N}}$ and $\{b_k\}_{k\in\mathbb{N}}$ are constant. Then Theorem \ref{thm:convergence} applies and guarantees that the sequence $\{(x^{(k)},\rho^{(k)})\}_{k\in\mathbb{N}}$ converges to a stationary point $(x^*,\rho^*)$ of $\mathcal{F}$. Note that, since $\mathcal{F}$ is the sum of separable functions, its subdifferential can be written as
$\partial \mathcal{F}(x,\rho)=\partial f(x)\times \{\rho\}$. Then, $(x^*,\rho^*)$ is stationary for $\mathcal{F}$ if and only if $\rho^*=0$ and $0\in\partial f(x^*)$. Hence $x^*$ is a stationary point for $f$ and $\{x^{(k)}\}_{k\in\mathbb{N}}$ converges to it.  
\end{proof}

\paragraph{Funding and data availability statement}Silvia Bonettini, Marco Prato and Simone Rebegoldi are members of the INdAM research group GNCS. Peter Ochs acknowledges funding by the German Research Foundation (DFG Grant OC 150/3-1).\\
The test image \texttt{barbara} in the experiments described in Section \ref{sec:first_test} is included in the software available at \cite{iPila2021}, while the \texttt{jetplane} one used in Section \ref{sec:second_test} can be downloaded from \cite{Logiciel}.

\bibliography{biblio_Silvia}
\bibliographystyle{plain}

\end{document}